\newcommand{\Z}{{\mathbb Z}}
\newcommand{\C}{{\mathbb C}}
\newcommand{\R}{{\mathbb R}}
\newcommand{\A}{{\mathcal A}}
\newcommand{\csm}{{\text{csm}}}
\newcommand{\F}{{\mathcal F}}
\DeclareMathOperator{\Star}{\text{star}}
\DeclareMathOperator{\rank}{\text{rank}}
\DeclareMathOperator{\relint}{\text{int}}
\DeclareMathOperator{\tr}{\text{tr}}
\DeclareMathOperator{\crem}{\text{Crem}}
\DeclareMathOperator{\cl}{\text{cl}}
\newtheorem{thm}{Theorem}[section]
\newtheorem*{thmintro}{Theorem}
\newtheorem*{corintro}{Corollary}
\newtheorem{definition}[thm]{Definition}
\newtheorem{prop}[thm]{Proposition}
\newtheorem{proposition}[thm]{Proposition}
\newtheorem{lemma}[thm]{Lemma}
\newtheorem{cor}[thm]{Corollary}
\newtheorem{remark}[thm]{Remark}
\newtheorem{corollary}[thm]{Corollary}
\theoremstyle{definition}
\theoremstyle{definition}
\newtheorem{example}[thm]{Example}
 \numberwithin{equation}{section}
\tikzset{%
  add/.style args={#1 and #2}{to path={%
 ($(\tikztostart)!-#1!(\tikztotarget)$)--($(\tikztotarget)!-#2!(\tikztostart)$)%
  \tikztonodes}}
}
\newcommand{\comment}[1]{}
\begin{document}
\title{On the birational geometry of matroids}

\maketitle
\centerline{Kris Shaw and Annette Werner}
\begin{abstract}

This paper investigates isomorphisms of Bergman fans of matroids respecting different fan structures, which we regard as matroid analogs of birational maps. We show that  isomorphisms respecting the fine fan structure are induced by matroid isomorphisms. 

We introduce Cremona automorphisms of the coarse structure of Bergman fans, which are not induced by matroid automorphisms. 
We show that the automorphism group of the coarse fan structure is generated by matroid automorphisms and Cremona maps in the case of rank $3$ matroids which are not parallel connections and for modularly complemented matroids.

\end{abstract}
\centerline{MSC(2020):  14T20, 52B40, 14E07}
\section{Introduction}

This paper investigates the following question: 
What are the isomorphisms of Bergman fans of matroids with some choice of fan structures?
Some, but not all, isomorphisms of Bergman fans of matroids arise from isomorphisms of matroids.
For matroids which are realizable as an essential hyperplane arrangement in projective space, birational morphisms which are regular on the arrangement complement give rise to invertible linear maps between supports of the corresponding Bergman fans. Therefore we regard isomorphisms of Bergman fans as examples of matroid analogs of  birational maps. 

Recently, there have been exciting breakthroughs in matroid theory motivated by algebraic geometry. In \cite{AHK}, \cite{ArdilaDenhamHuh}, \cite{AminiPiquerez} a framework of Hodge theory for matroids was developed with applications to deep open combinatorial problems. For an overview see \cite{Ardila}. One key ingredient in these results is the study of Bergman fans and  the Chow rings of matroids. These tools also play a decisive role in the present paper.

 To be precise, let $M$ and $M'$ be matroids on ground sets $E$ and $E'$ respectively. 
For  $k \in E$, let  $v_k  \in \Z^E$ be the canonical basis vector associated to  $k$.
The Bergman fan of $M$ is denoted $B(M)$ and is viewed as a subset of $\R^{E} / \R \mathbbm{1}$, where $\mathbbm{1} = \sum_{k \in E} v_k$.
 We use the notation $B_\ast(M)$ to denote a chosen fan structure, 
see Section \ref{sec:MatFans}. We mostly investigate the coarse and fine fan structures, denoted 
 $B_c(M)$ and $B_f(M)$ respectively, as defined in Section \ref{sec:MatFans}.

 An isomorphism of Bergman fans  $ B_\ast(M)$ and  $B_\ast(M')$, where $\ast$ denotes  choices of fan structures, is defined as a linear map $\phi: \R^E / \R \mathbbm{1}  \to \R^{E'}/ \R \mathbbm{1} $ derived from a lattice isomorphism  $\Z^E \to \Z^{E'}$, such that $\phi$ restricts to an isomorphism of fans  $ B_\ast(M) \to B_\ast(M')$, see Definition \ref{def:morphism}.

Note that for simple matroids $M_1$, $M_2$ of rank $2$, every isomorphism between their Bergman fans (where all fan structures coincide) is obviously induced by a matroid isomorphism.  
Our first main theorem establishes the analogous claim for simple matroids of all ranks, if we consider the fine structure:

\begin{thmintro}[{\bf \ref{thm:fineMatroidAuto}}] Let $M_1$ and $M_2$ be simple matroids such that  neither is totally disconnected. If  $\phi: B_f(M_1) \to B_f(M_2)$
is an isomorphism of fans, then $\phi$ is induced by a matroid isomorphism. 
\end{thmintro}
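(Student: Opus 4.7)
My plan is to exploit the fact that the fine fan structure $B_f(M)$ is a combinatorial avatar of the lattice of flats $\L(M)$: the rays of $B_f(M)$ are precisely $\R_{\geq 0} e_F$ for proper non-empty flats $F$, where $e_F = \sum_{k \in F} v_k$ modulo $\R\mathbbm{1}$, and the cones are the nonnegative spans $\R_{\geq 0}\{e_{F_1},\ldots,e_{F_k}\}$ corresponding to flags $F_1 \subsetneq \cdots \subsetneq F_k$. Restricting the fan isomorphism $\phi$ to rays yields a bijection $\tau$ from the proper non-empty flats of $M_1$ to those of $M_2$ carrying chains to chains.

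The first key step is to promote $\tau$ to an order isomorphism $\L(M_1) \to \L(M_2)$. A chain-preserving bijection of a connected poset is either order-preserving or order-reversing; since the Hasse diagram of $\L(M) \setminus \{\emptyset, E\}$ is connected for any simple matroid with at least three elements and rank at least $2$, this forces $\tau$ globally to be a lattice iso or a lattice anti-iso. The delicate part, where the hypothesis ``neither $M_i$ is totally disconnected'' enters, is eliminating the anti-iso case. An anti-iso $\tau$ sends each singleton flat $\{k\}$ of $M_1$ to a coatom $H_k$ of $\L(M_2)$; by the primitivity argument below, $\phi$ would then act on atoms by $v_k \mapsto e_{H_k}$, and the requirement that $\phi$ lift to a $\Z$-lattice isomorphism $\Z^{E_1} \to \Z^{E_2}$ becomes a strong determinant condition on the atom-coatom incidence matrix of $M_2$ (allowing integer shifts in the $\mathbbm{1}$-direction). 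This condition can be satisfied exactly when $M_2$ is $U_{n,n}$ (via the lift $v_k \mapsto -v_k$), contradicting the non-totally-disconnected hypothesis.

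Once $\tau$ is known to be a lattice isomorphism, it carries atoms to atoms. For simple matroids the atoms of $\L(M)$ are the singletons $\{k\}$, so $\tau$ restricts to a bijection $\sigma: E_1 \to E_2$. Since every flat $F$ of $M_1$ is the join in $\L(M_1)$ of the singleton atoms it contains, and $\tau$ preserves joins, one obtains $\tau(F) = \sigma(F)$ as a set, whence $\sigma$ is a matroid isomorphism with $\sigma_*(F) = \tau(F)$ on every flat. It remains to verify that $\phi$ coincides with the linear map induced by $\sigma$. Since $\phi$ is derived from a $\Z$-lattice isomorphism, it sends primitive integer vectors to primitive integer vectors; a short gcd check shows $e_F \in \Z^{E_1}/\Z\mathbbm{1}$ is primitive for every proper non-empty flat $F$, whence $\phi(e_F) = e_{\sigma(F)}$. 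Taking $F = \{k\}$ yields $\phi(v_k) = v_{\sigma(k)}$ modulo $\mathbbm{1}$, and since the $v_k$ span $\R^{E_1}/\R\mathbbm{1}$, $\phi$ coincides with the linear extension of $\sigma$.

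The main obstacle is the anti-iso exclusion. The interplay between the combinatorial rigidity of geometric lattices and the $\Z$-integrality of $\phi$ is delicate: the order-reversing case produces a candidate linear map on the quotient that is not forbidden $\R$-linearly, and only the impossibility of lifting to a $\Z$-lattice isomorphism---an impossibility that fails precisely for totally disconnected matroids, where $v_k \mapsto -v_k$ provides such a lift---rules it out.
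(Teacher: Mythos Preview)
Your outline is different from the paper's and would be attractive if it worked, but both key steps are asserted without proof, and the first is false as stated. The claim ``a chain-preserving bijection of a connected poset is either order-preserving or order-reversing'' fails already for the five-element poset with covering relations $a \lessdot c$, $a \lessdot d$, $b \lessdot d$, $c \lessdot e$, $d \lessdot e$: the comparability-graph automorphism $a \leftrightarrow d$, $b \leftrightarrow c$, $e \mapsto e$ preserves $a<e$ but reverses $a<c$. You would therefore need to establish the dichotomy specifically for open intervals of geometric lattices, and that is exactly the crux of the theorem---the paper makes the same starting observation (the induced bijection of flats ``preserves adjacency'') and then passes to Chow rings precisely because no direct poset argument is evident. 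Your second step, that $\Z$-invertibility modulo $\mathbbm{1}$ of the atom--coatom incidence matrix forces $M_2 \cong U_{n,n}$, is likewise only asserted; at a minimum you must rule out self-dual matroids such as finite projective planes, which have equally many points and hyperplanes and admit genuine lattice anti-automorphisms, so the obstruction cannot be the mere existence of an anti-isomorphism but must be a determinant computation you have not carried out.

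The paper instead uses the linearity of $\phi$ throughout, via the induced degree-preserving isomorphism of Chow rings together with Eur's formula $\deg(x_F^d) = (-1)^{d-1}\binom{d-1}{d-k}\mu^{d-k}(M_2/F)$ for a rank-$k$ flat $F$. This forces a corank-one flat to map to a flat of rank $1$ or corank $1$; the rank-$1$ alternative, propagated through further degree identities, makes every $M_2/\{i\}$ totally disconnected and hence $M_2$ itself totally disconnected. A downward induction through truncations then shows that flats of every rank are sent to flats of the same rank. The Chow-ring route thus merges the combinatorial and linear constraints from the outset rather than separating them, which is precisely where your argument loses traction.
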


If $M$ is not a simple matroid on the ground set $E$, let $\hat{M} $ denote its simplification on the ground set $\hat{E}$. Note that the affine Bergman fan of $M$ lives in a vector subspace of $\R^{E}$ which is determined by the parallel elements of $M$ and which is isomorphic to $\R^{\hat{E}}$. 
Moreover, the coordinate projection $\R^{E} \to \R^{\hat{E}}$ restricts to an isomorphism on this subspace and gives an isomorphism of the Bergman fan of $M$ with the Bergman fan of its simplification.  
Therefore, in order for the above theorem to hold,
we need to assume that $M_1$ and $M_2$ are simple. 
Moreover, the connectedness assumption in Theorem \ref{thm:fineMatroidAuto} is essential as the following example shows.

\begin{example}\label{ex:cremUniform}

If $M$ is totally disconnected and simple, then $M = U_{n+1, n+1}$, so that the lattice of flats is the Boolean lattice on $n+1$ elements. Let $\phi : \R^{E} \to \R^{E}$ be the linear map sending $v \mapsto - v$. This map  descends to a fan automorphism $\phi : B_f(M) \to B_f(M) $ which does not arise from a matroid automorphism. 
\end{example} 

The proof of the Theorem \ref{thm:fineMatroidAuto} is based on the observation that $\phi$ induces an isomorphism of Chow rings. In Section \ref{sec:Chowrings}, 
we review the Chow ring of fans and the degree map in the case of matroid fans, see \cite{AHK} and more generally \cite{AminiPiquerez}. We show that an isomorphism of Bergman fans of matroids with arbitrary fan structures induces an isomorphism of the respective Chow rings which is compatible with the degree map, see Proposition \ref{cor:chowdeg}. 
This isomorphism and Eur's degree formula \cite{eur} are the main tools used to prove Theorem \ref{thm:fineMatroidAuto}.

In Section \ref{sec:Orlik}, we show that the existence of isomorphisms of  Bergman fans implies that the underlying matroids have the same characteristic  polynomials. It is an open question, which non-isomorphic matroids have the same characteristic polynomials. As an example, the operation of parallel connection discussed in Section  \ref{sec:MatFans}   produces examples of non-isomorphic matroids with the same characteristic polynomials \cite{EschenbrennerFalk}. In Section \ref{sec:Orlik}, we also  conclude that the Chern-Schwartz-MacPherson classes of matroids from \cite{LdMRS} 
are preserved under matroid fan isomorphisms, see Proposition \ref{prop:samecsm}.

We then consider our main question for coarse fan structures. 
 In rank $3$ we show that there are no isomorphisms $\phi: B_c(M_1) \to B_c(M_2)$ for non-isomorphic matroids $M_1$ and $M_2$ , see Theorem \ref{prop:rank3}. However, the situation is different from rank $4$ on. 
 In Section \ref{sec:isosCoarse}, we show that there are  non-isomorphic matroids arising from parallel connections with isomorphic coarse Bergman fans, see Corollary \ref{cor:isoparallel}.
The hyperplane arrangement analogue of the operation of parallel connection produces non-isomorphic arrangements with diffeomorphic complements \cite{EschenbrennerFalk}.

We then switch perspective to automorphisms of a single Bergman fan and ask: What can be said of the automorphism group of a Bergman fan of a matroid with a fixed fan structure? Inspired by birational geometry, we introduce and study Cremona maps for matroids. These are automorphisms of the coarse Bergman fan not induced by matroid automorphisms.
The map $\phi$ from Example \ref{ex:cremUniform} is the first  simple example of a Cremona map, and is the tropicalization of the standard Cremona map on projective space. For connected matroids, we provide an intrinsic definition of Cremona maps. As an example, we describe in
Section \ref{sec:realizable} the automorphism group of the coarse Bergman fan of the braid arrangement matroid. Note that the Bergman fan of the braid arrangement in dimension $n$ is the moduli space of tropical genus zero curves with $(n+2)$-marked points. It was shown in \cite{ap}, that the automorphism group of this tropical moduli space is isomorphic to the symmetric group $S_{n+2}$. We show in 
Proposition \ref{lem:braid}, that this implies that the automorphism group of the coarse Bergman fan of the braid arrangement matroid is generated by the matroid automorphisms and a single Cremona map.  Theorem \ref{thm:fineMatroidAuto} implies that in this example the automorphism group of the coarse fan structure is strictly larger than the automorphism group of the fine fan structure.  

In general, we define Cremona maps as follows. If
$M$ is a simple matroid on the ground set $E$ with basis  $b = \{b_0, \dots, b_{d}\}$, 
we
define the flat $B_j$ as the closure of $b \backslash \{ b_j\}$ and put 
$v_{B_j} = \sum_{k \in B_j} v_k \in \R^E$.
Then we define the $\Z$-linear map $\crem_{b} \colon \R^{E}   \to \R^{E} $  by  
$v_{b_j}   \mapsto v_{B_j} $ for the basis elements $b_j$, and 
$v_k  \mapsto v_k $ for all $k \in E \backslash b$. 
If $\crem_b$ maps the line $\R \mathbbm{1}$ to itself, we also write 
$\crem_b: \R^{E } / \R \mathbbm{1} {\rightarrow} \R^{E} / \R \mathbbm{1}$ for the quotient map.  
 We then prove the following necessary and sufficient criterion for the map $\crem_b$ to be an automorphism of the coarse fan structure of the Bergman fan.

\begin{thmintro}[\bf \ref{thm:cremiff}] Let $b$ be a basis of a simple connected matroid $M$.  For any pair of elements $i, j \in E$, let $F_{i j}$ be the rank $2$ flat which is the closure of $i$ and $j$. The map $\crem_b$ descends to a linear map $\crem_b:  \R^{E} / \R \mathbbm{1} \rightarrow \R^{E} / \R \mathbbm{1}$ mapping  $B(M)$  to itself, if only if the sets 
$\{F_{i j} \backslash \{i, j\}\}_{i, j \in b}$ partition the set $E \backslash b$ into pairwise disjoint subsets.  
\end{thmintro}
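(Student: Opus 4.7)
The plan is to split the proof into a combinatorial descent part and an involutive support-preservation part. Both directions of the theorem then follow: the forward implication uses only the descent analysis, while the reverse implication additionally requires showing that $\crem_b$ preserves the support $|B(M)|$.

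\emph{Descent is equivalent to the partition condition.} I would compute $\crem_b(\mathbbm{1}) = \sum_{j} v_{B_j} + \sum_{k \in E \setminus b} v_k$ and read off coefficients. The coefficient of $v_{b_i}$ is always $d$, while the coefficient of $v_k$ for $k \in E \setminus b$ is $1 + |\{j : k \in B_j\}|$. Hence $\crem_b$ descends iff every $k \in E \setminus b$ fails to lie in exactly two of the $B_j$'s. Using that $k \notin B_j$ is equivalent to $b_j$ appearing in the fundamental circuit $C(k, b)$, this reads $|C(k,b)| = 3$, i.e., $k \in F_{ij} \setminus \{b_i, b_j\}$ for some pair $\{i, j\} \subset b$. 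This pair is uniquely determined by $k$ because any three basis elements are independent, so each $F_{ij}$ meets $b$ in exactly $\{b_i, b_j\}$; this produces the required partition of $E \setminus b$.

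\emph{Involutivity and support preservation.} Assuming the partition condition, write $\{i_k, j_k\}$ for the unique pair such that $k \in F_{i_k j_k}$. A direct coefficient computation shows $\crem_b^{\,2}(v_{b_j}) \equiv v_{b_j} \pmod{\mathbbm{1}}$, with the coefficient of each non-basis $v_k$ working out to the common value $d - 1$ in both the cases $j \in \{i_k, j_k\}$ and $j \notin \{i_k, j_k\}$. Combined with $\crem_b^{\,2}(v_k) = v_k$ for $k \notin b$, this shows $\crem_b$ is an involution on $\R^E / \R \mathbbm{1}$, so it suffices to prove $\crem_b(|B(M)|) \subseteq |B(M)|$. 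I would apply the circuit characterization: $w \in |B(M)|$ iff for every circuit $C$ of $M$ the minimum of $w|_C$ is attained at least twice. In a representative with $\sum_j w_{b_j} = 0$, the coordinates of $\crem_b(w)$ are $-w_{b_j}$ on $b$ and $w_k - w_{b_{i_k}} - w_{b_{j_k}}$ on $E \setminus b$, and the task reduces to verifying the min-attained-twice condition on $\crem_b(w)|_C$ for each circuit $C$.

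\emph{Main obstacle.} The harder part is the min-attained-twice verification for circuits of length $\geq 4$. For fundamental $3$-circuits $\{b_i, b_j, k\}$ with $k \in F_{ij} \setminus b$ the condition follows from a short case split on which pair realizes the minimum of $w$. For longer circuits $C$, the shifts $-w_{b_{i_k}} - w_{b_{j_k}} + w_k$ in the non-basis entries of $\crem_b(w)$ must be controlled by the Bergman condition on the attached $3$-circuits $\{b_{i_k}, b_{j_k}, k\}$ for each $k \in C \setminus b$, and then combined with the Bergman condition on $C$ itself to locate two entries attaining the minimum of $\crem_b(w)|_C$. The partition condition is used in full strength here, producing exactly one attached $3$-circuit per non-basis element of $C$ and thereby supplying the coordinate-by-coordinate control required.
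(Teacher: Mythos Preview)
Your forward implication is correct and in fact cleaner than the paper's. You observe that the descent condition alone --- that $\crem_b(\mathbbm{1})$ be a multiple of $\mathbbm{1}$ --- already forces $|C(k,b)| = 3$ for every $k \in E \setminus b$, and hence the partition of $E \setminus b$ by the sets $F_{ij}\setminus\{b_i,b_j\}$. The paper instead assumes that $\crem_b$ preserves $B(M)$, passes to the minor $M\backslash(C\setminus k)/k$ via coordinate projections, and then invokes Lemma~\ref{CremonaOnRn}, which rests on the Huh--Katz description of $\mu^i(M)$ as an intersection number on the permutahedral variety. Your coefficient computation bypasses all of this machinery, so the forward direction is a genuine improvement.

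The reverse implication, however, is incomplete. You correctly reduce to showing $\crem_b(|B(M)|)\subseteq |B(M)|$ after establishing that $\crem_b$ is an involution, and you verify the min-attained-twice condition for the fundamental $3$-circuits. But your treatment of general circuits is only a plan: saying that the shifts on non-basis elements ``must be controlled by the Bergman condition on the attached $3$-circuits \ldots and then combined with the Bergman condition on $C$ itself'' is not yet an argument. For a circuit $C$ of length $\geq 4$, the elements $k\in C\setminus b$ may have attached pairs $\{b_{i_k},b_{j_k}\}$ that lie partly inside and partly outside $C$, and it is not clear how the individual $3$-circuit constraints organize themselves into the desired inequality on $\crem_b(w)|_C$. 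You have not exhibited the case analysis or the combinatorial lemma that would close this.

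The paper avoids this difficulty by working with fan structures rather than circuits. It proves (Lemma~\ref{lem:CremInvolution}) that under the partition hypothesis every flat $G$ decomposes as $F_{b\cap G}\sqcup(G\cap F_{b\setminus G})$, with $\crem_b(v_{F_{b\cap G}})\equiv v_{F_{b\setminus G}}$ and $\crem_b$ fixing $v_{G\cap F_{b\setminus G}}$. Then for each maximal cone $\rho_{\mathcal G}$ of the fine structure $B_f(M)$, one extracts from the flag $\mathcal G$ a collection $\{F_{I_1},\ldots,F_{I_r},K_1,\ldots,K_s\}$ of flats (the $F_{I_j}$ coming from the complements $b\setminus G_j$, the $K_l$ from the connected components of the $G_j\cap F_{b\setminus G_j}$) and checks directly that this collection is nested for the minimal building set. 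The image $\crem_b(\rho_{\mathcal G})$ then lands in the corresponding cone of $B_m(M)$. This flat-decomposition argument is what replaces your missing circuit verification; you should either carry out the circuit analysis in full or switch to the paper's structural route.
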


We then focus on the case of rank three matroids in Section \ref{sec:rank3}. In particular, Theorem \ref{prop:rank3} states that if there is an isomorphism of coarse Bergman fans of rank three matroids,  then the underlying matroids are isomorphic. 
From this we can also deduce the following corollary. 

\begin{corintro}[\bf \ref{cor:autgroup_rank3}] The automorphism group of $B_c(M)$ for a rank $3$ matroid $M$ which is not a parallel connection is generated by matroid automorphisms and Cremona maps. 
\end{corintro}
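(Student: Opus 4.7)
The plan is to analyze an arbitrary $\phi \in \mathrm{Aut}(B_c(M))$ via its action on rays and reduce, by composing with Cremona maps, to Theorem~\ref{thm:fineMatroidAuto}. Without loss of generality $M$ may be assumed simple, since simplification leaves the Bergman fan unchanged.

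First I would split the rays of $B_c(M)$ into two classes: \emph{atomic} rays $v_a$ for $a\in E$ and \emph{flat} rays $v_F$ for rank $2$ flats $F$ with $|F|\geq 3$. These two classes can be distinguished by the local combinatorial pattern of adjacent $2$-cones, since every $2$-cone incident to a flat ray $v_F$ has its other vertex at an atomic ray (an element of $F$), whereas at an atomic ray $v_a$ one may also find $2$-cones spanned by two atomic rays (arising from rank $2$ flats of size $2$). Any automorphism $\phi$ therefore either preserves this dichotomy on every ray or swaps some atomic and flat rays.

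In the type-preserving case, $\phi$ descends to an automorphism of the fine fan structure $B_f(M)$, because each coarse $2$-cone spanned by two atomic rays is refined in $B_f(M)$ by the unique interior ray coming from the corresponding small rank $2$ flat, and this refinement is forced by the matroid. Theorem~\ref{thm:fineMatroidAuto} then yields that $\phi$ is induced by a matroid automorphism.

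In the type-violating case, suppose $\phi(v_a) = v_F$ with $v_a$ atomic and $v_F$ flat (where $a \notin F$, a condition one checks is forced by the preservation of cone incidences). Choosing distinct $b_1, b_2 \in F$, the set $b = \{a, b_1, b_2\}$ is a basis of $M$, since $F_{b_1 b_2} = F$ has rank $2$ and does not contain $a$. The strategy is to select $b_1, b_2$ so that the three rank $2$ flats $F_{a b_1}, F_{a b_2}, F$ partition $E\setminus b$ in the sense of Theorem~\ref{thm:cremiff}. The resulting Cremona map $\crem_b$ then sends $v_a$ to $v_F$, so that $\crem_b^{-1} \circ \phi$ fixes $v_a$ and has strictly fewer atomic-flat swaps. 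Iterating reduces the situation to the type-preserving case and expresses $\phi$ as a product of matroid automorphisms and Cremona maps.

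The hard part will be the construction step: showing that suitable $b_1, b_2$ exist whenever $M$ is not a parallel connection. This is where the hypothesis enters, since parallel-connection decompositions of $M$ are exactly the structural obstruction to the partition condition of Theorem~\ref{thm:cremiff} and in fact produce extra automorphisms of $B_c(M)$ not generated by matroid automorphisms and Cremona maps. The verification of existence is expected to draw on the structural case analysis of rank $3$ matroids developed in Section~\ref{sec:rank3} en route to Theorem~\ref{prop:rank3}.
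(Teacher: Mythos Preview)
Your type-preserving case is correct: if $\phi$ sends every atomic ray to an atomic ray, then (since the matroid is simple and not totally disconnected) $\phi$ refines to an automorphism of $B_f(M)$ and Theorem~\ref{thm:fineMatroidAuto} applies. However, your treatment of the type-violating case has a genuine gap in the construction of the Cremona basis. You propose to take $b=\{a,b_1,b_2\}$ with $b_1,b_2\in F=\phi(v_a)$, but there is no reason this basis satisfies the partition condition of Theorem~\ref{thm:cremiff}, and your claimed iteration (``strictly fewer atomic-flat swaps'') is not justified: composing with an arbitrary Cremona may create as many swaps as it removes. The claim that $a\notin F$ is forced by cone incidences is also unsubstantiated.

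The paper's argument (Theorem~\ref{thm:cremona3}, from which the corollary is immediate) proceeds quite differently and crucially relies on Chow ring degree computations in $A^*_c(M)$ (Lemma~\ref{lem:coarseChow3}) rather than pure incidence combinatorics. One shows that for any atom $k$ sent to a rank~$2$ ray, the equation $\deg x_k^2=\deg x_{F'}^2=-1$ forces $k$ to lie in exactly two connected rank~$2$ flats, and each such flat must itself be swapped. Assembling these constraints into a bipartite incidence graph on the swapped atoms $\tilde E_1$ and swapped flats $\tilde P_1$, one proves this graph is $2$-regular, connected, and bipartite, hence a cycle $C_{2m}$; a further count forces $m=3$. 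The Cremona basis is then precisely $\tilde E_1=\{j,k,l\}$, the \emph{full} set of swapped atoms, and a final Chow ring argument verifies the partition condition of Theorem~\ref{thm:cremiff} for this basis. Thus a \emph{single} Cremona (not an iteration) suffices, and the basis is determined by $\phi$ rather than chosen from the target flat $F$.
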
 

We conclude the paper by studying modularly complemented matroids. A simple modularly complemented matroid of rank at least $4$ is either a sufficiently big submatroid of projective space or a Dowling matroid associated to a group $G$. For the modularly complemented submatroids of projective space we prove in  Section \ref{sec:modular} that the automorphism group of $B_c(M)$ is equal to the group of matroid automorphisms. For Dowling matroids however, we find a Cremona transformation in the automorphism group of the coarse Bergman fan. This leads to the following theorem, combining Proposition \ref{prop:projective} and Proposition \ref{prop:dowling}.

\begin{thm}
For every simple modularly complemented matroid $M$, the automorphism group of $B_c(M)$ is generated by Cremona maps and matroid automorphisms. 
\end{thm}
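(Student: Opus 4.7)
The plan is a case analysis based on the classification of simple modularly complemented matroids, together with an assembly of the two propositions referenced in the statement. For matroids of rank at least $4$, we invoke the classification asserting that a simple modularly complemented matroid is either a sufficiently rich submatroid of a projective geometry or a Dowling geometry $Q_r(G)$ associated to a group $G$. The theorem then follows by combining Proposition \ref{prop:projective} (projective case) and Proposition \ref{prop:dowling} (Dowling case) with a direct argument for low ranks.

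In the projective case, Proposition \ref{prop:projective} proves that $\mathrm{Aut}(B_c(M))$ already equals the matroid automorphism group, so the statement holds without invoking Cremona maps. In the Dowling case, Proposition \ref{prop:dowling} produces an explicit Cremona transformation and shows that together with the matroid automorphisms it generates the whole of $\mathrm{Aut}(B_c(Q_r(G)))$. This Cremona transformation is the map $\crem_b$ attached to the basis $b$ of joints of $Q_r(G)$; the partition condition of Theorem \ref{thm:cremiff} is verified by observing that for two joints $b_i,b_j$ the rank $2$ flat $F_{b_i b_j}$ consists of $b_i, b_j$ together with the $G$-indexed family of edges between the corresponding coordinates, and these edge families partition the non-joint ground set.

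The low rank cases are treated separately. Ranks $\leq 2$ are immediate, as $B_c(M)$ is a discrete collection of rays on which every fan automorphism is realized by a matroid automorphism. Rank $3$ follows from Corollary \ref{cor:autgroup_rank3} provided $M$ is not a non-trivial parallel connection, and this is automatic for a simple modularly complemented matroid of rank $3$: a non-trivial parallel decomposition along a single element would preclude the existence of a modular complement to the line separating the two factors.

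The main obstacle is the internal content of Proposition \ref{prop:dowling}, namely showing that any automorphism of $B_c(Q_r(G))$ is a composition of matroid automorphisms and the Dowling Cremona map. This requires reconstructing, purely from fan data, both the intrinsic notion of a joint and the group labeling on parallel edges, a delicate combinatorial rigidity argument. By contrast, the projective case in Proposition \ref{prop:projective} is comparatively softer, as the projective structure is rigid enough that no genuinely new automorphisms can arise on the coarse fan.
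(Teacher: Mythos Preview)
Your proposal matches the paper's own argument, which simply records the theorem as the combination of Proposition \ref{prop:projective} and Proposition \ref{prop:dowling} via the Kahn--Kung classification; the paper gives no separate proof and tacitly restricts to connected matroids of rank at least $4$, where that classification applies.

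Your low-rank discussion goes beyond what the paper supplies, but the rank $3$ step contains a slip. In a non-trivial parallel connection $P_p(M_1,M_2)$ of two simple rank $2$ matroids, the line $E_i$ \emph{does} admit a modular complement, since every singleton in a simple matroid is a modular flat. The flat that fails to have a modular complement is the point $\{p\}$: a complement would have to be a rank $2$ flat avoiding $p$, hence a two-element transversal $\{x,y\}$ with $x\in E_1\setminus\{p\}$ and $y\in E_2\setminus\{p\}$, and such a flat fails the modular pair condition against a disjoint transversal $\{x',y'\}$ (their union has rank $3$, not $4$). Such a disjoint transversal exists as soon as both $|E_i|\geq 3$, which is forced once $M$ is connected (otherwise $p$ would be a coloop in one factor and $M$ would split as a direct sum). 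With this correction your reduction to Corollary \ref{cor:autgroup_rank3} goes through for connected $M$. Connectedness is in any case required for the Kahn--Kung classification in rank $\geq 4$; both you and the paper leave this hypothesis implicit.
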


Based on our results, one might ask: under which conditions
is the automorphism group of the coarse Bergman fan of  a simple matroid $M$ generated by matroid automorphisms and Cremona maps?

{\bf Acknowledgements: } The research of the first author was supported by the Trond Mohn Foundation project ``Algebraic and topological cycles in tropical and complex geometry". The second author was partially supported by the Deutsche Forschungsgemeinschaft (DFG, German Research Foundation) TRR 326 \textit{Geometry and Arithmetic of Uniformized Structures}, project number 444845124.

\section{Matroids and Bergman fans}\label{sec:MatFans}
A matroid is a pair $M = (E, r)$ where $E$ is a finite set  and $r : \mathcal{P}(E) \to \Z_{\geq 0}$ is a rank function satisfying the axioms

\begin{enumerate}
\item $r(A) \geq 0 $ for all $A \subseteq E$ and $r(\emptyset) = 0$,
\item For $A, B \subseteq E$ we have $r(A \cup B) + r(A \cap B) \leq r(A) + r(B)$, 
\item For any $i \in E$ and $A \subseteq E$, we have $r(A) \leq r(A \cup \{ i \}) \leq r(A) + 1$. 
\end{enumerate}

The rank of a matroid $M$ is $r(E)$ and is denoted $r(M)$. 
A \emph{flat} of a matroid $M$ is a subset $F \subseteq E$ which is closed under the rank function, which means that for all $i \notin F$ the rank of $F \cup \{ i\}$ is strictly greater than the rank of $F$. 
The flats are ordered by inclusion and form a lattice, in the sense of partially ordered sets. 
For every subset $S$ of $E$ we write $\cl(S)$  for the inclusion minimal flat containing $S$.

A \emph{circuit} of a matroid is a minimally dependent subset in the sense of inclusion. 
A \emph{loop} of a matroid $M = (E, r)$   is a element $i \in E$  such $r(\{i\}) = 0$. A pair of elements $i, j \in E$ are \emph{parallel} if neither $i$ nor $j$ is a loop and $r(\{i,j\}) = 1$. 
A matroid is \emph{simple} if it contains no loops or parallel elements. 
A \emph{connected component} of a matroid is an equivalence class under the relation defined by  $i \sim j$ if $ i$ and $j$ are contained in a circuit. Each connected component forms a matroid and a matroid decomposes into a direct sum of its connected components.
A matroid $M$ is \emph{totally disconnected} if all of its connected components have rank less than or equal to one. 
The \emph{parallel connection} $P_p(M_1, M_2)$ of two matroids $M_1$ and $M_2$ such that $E(M_1) \cap E(M_2) = \{p\}$ is a matroid on the set $E(M_1) \cup E(M_2)$, see \cite{oxley}, Section 7.1. We say that $M$ is a non-trivial parallel connection along $p \in E(M)$ if there exist matroids $M_1$ and $M_2$, which are both of rank at least two, so that $M$ 
is isomorphic to
$P_p(M_1, M_2)$. 
We say that $M$ is a \emph{non-trivial parallel connection}, if there exists some $p \in E(M)$ such that $M$ is a non-trivial parallel connection along $p$.

\begin{definition}
Let $M_1 = (E_1, r_1)$ and $M_2 = (E_2, r_2)$ be two matroids. A matroid isomorphism $f \colon M_1 \to M_2$ is a bijection $f \colon E_1 \to E_2$ such that for every subset $I \subseteq E_1$ we have $r_2(f(I)) = r_1(I)$. 
\end{definition}

Since we are interested in Bergman fans,  we will generally assume that our matroids are loopfree.
Suppose $M = (E, r) $ is a matroid  containing no loops.  
For every subset $F \subseteq E$, define a vector $v_F:= \sum_{i \in F} v_i \in \R^{E}$. 
We also write  $v_E= \mathbbm{1}$.
 
The (affine) \emph{Bergman fan} $\tilde{B}(M)$ associated to a loopfree matroid $M$ is defined as the subset of $\R^{E}$ consisting of all vectors $\sum_i a_i v_i$ such that for each circuit $C$ of $M$ the minimum of the set $\{a_i: i \in C\}$ is attained twice, see e.g.  \cite{feistu}. 
It is invariant under scaling by the vector $\mathbbm{1}$. We will often work with the \emph{projective Bergman fan} $B(M)$, which is defined  as the image of the affine Bergman fan  under the quotient map
$\R^{E} / \R \mathbbm{1}$.

\begin{remark}\label{rem:changeofcoordinates}
Given a polyhedral fan $\Sigma$ in a vector space $\R^n$ we may ask if $\Sigma$ is equal to the Bergman fan of a matroid with some fan structure up to a transformation in $\text{GL}_n(\Z)$. 
Studying the collection of such transformations in $\text{GL}_n(\Z)$ provides an alternative point of view to our main question. If  $A, B \in \text{GL}_n(\Z)$ are such that $A(\Sigma)$ and $B(\Sigma)$ are both Bergman fans of matroids with some fan structures, then there is an isomorphism of Bergman fans 
$\phi: A(\Sigma) \to B(\Sigma).$ The underlying matroids of  these two fans need not be the same. 

Tropical manifolds are locally modelled on matroid fans  up to coordinate changes in $\text{GL}_n(\Z)$  \cite{MikhalkinRau}. Therefore, answering our main question also has applications to tropical geometry. 
\end{remark}

The set $B(M)$ carries several interesting fan structures which have been investigated by Feichtner and Sturmfels \cite{feistu}:

i) We denote by $B_c(M)$ the natural fan structure  as a subfan of the normal fan of the matroid polytope \cite{feistu}, Proposition 2.5.  We refer to it as the coarse structure, since it is the coarsest fan structure on the set $B(M)$, see \cite{macstu}, Section 4.2 or \cite{hampe}, Proposition 3.4.1.

ii) The minimal nested set structure, which we denote by $B_m(M)$. The rays of the minimal nested set fan structure on $B(M)$ correspond to flats of $M$ which are \emph{connected}. A flat is connected if $M|F$ is connected. The cones of $B_m(M)$  correspond to \emph{nested} collections  of connected flats. A collection $\mathcal{S}$  of connected flats is called nested if for any subcollection of incomparable flats $F_1, \dots, F_t \in \mathcal{S}$ the join is not connected. 

iii) The fine structure which we denote by $B_f(M)$. This subdivision was first described in \cite{ArdilaKlivans}.
Here the building set in the sense of \cite{feistu}  consists of all flats, i.e.~it is maximal. We can describe the cones in the fine structure as follows:  They are induced by flag of flats $$\mathcal{F} := \emptyset \subsetneq F_1 \subsetneq F_2 \subsetneq \dots \subsetneq \dots F_k \subsetneq E$$ as the
rational  cones $$\rho_{\mathcal{F}} := \langle v_{F_1}, v_{F_2} , \dots , v_{F_k} \rangle_{\mathbb{R}_{\geq 0}}.$$

iv) We write $B_\ast(M)$ for an arbitrary unimodular fan structure. In this case, the rays of $B_{\ast}(M)$ need not correspond to flats of $M$. These fan structures are studied in \cite{AminiPiquerez}. Another example on such a fan structure is the conormal variety of a matroid from \cite{ArdilaDenhamHuh}. 

Note that by \cite[Proposition 2]{feiyuz}, the Bergman fan of a matroid from any building set is a unimodular fan. This implies that $B_m(M)$ and $B_f(M)$ are both unimodular fans. 
The following result by Feichtner and Sturmfels will be useful to study isomorphisms of Bergman fans with the coarse structure.

\begin{lemma}[{\cite{feistu}}, Theorem 5.3] \label{lem:coarsemin} The minimal nested set fan structure $B_m(M)$ coincides with the coarse fan structure $B_c(M)$ on the Bergman fan, if the matroid $M|G/F$ is connected for all flats $F,G$ such that $G$ is connected and contains $F$. 
\end{lemma}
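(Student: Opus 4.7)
The plan is to show that, under the connectivity hypothesis, $B_m(M)$ is already as coarse as possible and hence coincides with the coarsest structure $B_c(M)$. Since $B_m(M)$ generally refines $B_c(M)$, it suffices to show the two fans have the same rays and the same maximal cones. I would first reduce to the case where $M$ is connected using the product decomposition of Bergman fans under direct sums of matroids.

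For the rays, the rays of $B_c(M)$ are known to correspond to flacets of $M$, i.e.\ proper flats $F$ with both $M|F$ and $M/F$ connected, while the rays of $B_m(M)$ correspond to connected flats. The hypothesis applied with $G = E$ implies that every connected flat $F$ satisfies $M/F = M|E/F$ connected, making $F$ a flacet; combined with the obvious reverse inclusion, this yields equality of ray sets.

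For the maximal cones, a maximal cone of $B_c(M)$ is labelled by a maximal flag $\emptyset = F_0 \subsetneq F_1 \subsetneq \dots \subsetneq F_r = E$ with each consecutive minor $M|F_i/F_{i-1}$ connected, corresponding via the matroid polytope to a single vertex. A maximal cone of $B_m(M)$ is labelled by a maximal nested set of connected flats. I would build a bijection between these two indexing sets: any chain of connected flats with connected consecutive minors is automatically nested (chains have no incomparable pairs), and each $F_i$ is connected (otherwise one could refine the flag, contradicting connectedness of the consecutive minors). Conversely, I would show that maximal nested sets of connected flats are themselves chains under the hypothesis, with consecutive minors automatically connected by a direct application of the assumption.

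The main obstacle is showing that maximal nested sets of connected flats are totally ordered. If $F_1, F_2$ were incomparable members of a nested set $\mathcal{S}$, then by the nestedness condition the join $F_1 \vee F_2$ would be disconnected. On the other hand, after the reduction to the connected case, one can embed $F_1 \subseteq G$ for a connected flat $G \supseteq F_1 \vee F_2$ (for instance $G = E$), and the hypothesis gives $M|G/F_1$ connected. I would then analyse how the image of $F_2$ in the contraction $M|G/F_1$ interacts with the connected minor, and argue that the connectedness forces $F_1 \vee F_2$ itself to be connected, contradicting the nestedness condition. Once nested sets are shown to be chains, the hypothesis automatically ensures each consecutive minor is connected, closing the bijection between maximal $B_m$-cones and maximal $B_c$-cones and hence establishing $B_m(M) = B_c(M)$.
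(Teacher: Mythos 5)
The paper does not prove this statement; it is quoted directly as Theorem~5.3 of Feichtner--Sturmfels \cite{feistu}, so there is no in-paper proof against which to compare. Judged on its own terms, your sketch has a genuine gap at exactly the step you flag as the main obstacle.

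The claim that under the hypothesis ``maximal nested sets of connected flats are themselves chains'' is false, and the contradiction you aim for cannot be reached. Nestedness of an incomparable pair $\{F_1,F_2\}$ \emph{requires} the join to be disconnected, so antichains with disconnected joins are exactly what the definition is designed to permit --- there is nothing to contradict. Concretely, take $M=U_{3,4}$. The connected proper flats are the four singletons; for every flat $F$ the contraction $M/F$ is a connected uniform matroid, and $M|G$ is connected for $G$ a singleton, so the hypothesis of the lemma holds. Yet $\{\{1\},\{2\}\}$ is a nested antichain (its join $\{1,2\}$ restricts to $U_{2,2}$, which is disconnected), and it indexes a genuine two-dimensional cone $\langle v_1,v_2\rangle$ of both $B_m(M)$ and $B_c(M)$. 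In this example no maximal cone of either fan arises from a chain of proper flats, so the bijection you propose between maximal cones and maximal flags cannot exist. Relatedly, your description of the maximal cones of $B_c(M)$ as corresponding to vertices of the matroid polytope is off: vertices index the top-dimensional cones of the \emph{full} normal fan, which strictly contain the lower-dimensional cones forming the subfan $B_c(M)$. The argument needs to be rebuilt around antichains with disconnected joins rather than around chains.
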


For a polyhedral fan $\Sigma$ in $\R^n$ and a point $w \in |\Sigma|$ we define the star of $\Sigma$ at $w$ to be the subset of $$\Star_w(\Sigma) = \{ x \in \R^n \ | \ \exists \epsilon >0  \ s.t. \ \forall  0 < \delta < \epsilon,  \    w + \delta x \in |\Sigma|\}.$$
If the fan is an affine Bergman fan of a matroid $\tilde{B}(M)$, and $v_{\mathcal{F}}$ is the indicator of a flag of flats $\mathcal{F} = \{\emptyset = F_0 \subsetneq F_1 \subsetneq \dots \subsetneq F_k \subsetneq F_{k+1} = E\}$ of $M$ then by  \cite[Lemma 2.2]{FrancoisRau} and \cite[Proposition 2.2]{ArdilaKlivans} the star is the support of another affine Bergman fan, namely
\begin{equation} \label{eqn:stardescription}
\Star_{v_{\mathcal{F}}}(\tilde{B}(M)) =   \tilde{B}( M|F_{1}/F_{0}) \times  \dots \times \tilde{B}( M|F_{k+1}/F_{k}).
\end{equation}
It follows from this that the star of the projective Bergman fan $\Star_{v_{\mathcal{F}}}(B(M)) $ is then also a matroid 
fan $B\left( \bigoplus_{i = 1}^{k+1} M|F_{i}/F_{i-1}\right).$

\begin{lemma} \label{lem:parallelConnectionMissing}
Let $M$ be a simple matroid, and let $i$ be an element of $ E = E(M)$. The ray  $\langle v_i \rangle_{\mathbb{R}_{\geq 0}}$ is not a cone of some fan structure  $B_\ast(M)$ if and only if $M$ is a non-trivial parallel connection along $i$. 
\end{lemma}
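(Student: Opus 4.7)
The plan is to reduce the statement to a local analysis of $B(M)$ at the point $v_i$. Since $\{i\}$ is a flat of any simple matroid, $\langle v_i\rangle_{\mathbb{R}_{\geq 0}}$ is always a ray of the fine structure $B_f(M)$. Moreover, rays of the coarsest fan structure on a given support persist under refinement --- a $1$-dimensional cone cannot be properly subdivided --- so the condition ``$\langle v_i\rangle_{\mathbb{R}_{\geq 0}}$ is not a cone of some fan structure $B_\ast(M)$'' is equivalent to ``$\langle v_i\rangle_{\mathbb{R}_{\geq 0}}$ is not a ray of the coarse fan structure $B_c(M)$''.

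I would then characterize the latter via the local lineality of $B(M)$ at $v_i$, i.e., the largest linear subspace $L_{v_i}$ such that $v_i + L_{v_i} \subseteq B(M)$ on a neighborhood of $v_i$. For the coarsest fan structure, $L_{v_i}$ equals the linear span of the unique cone of $B_c(M)$ containing $v_i$ in its relative interior. Therefore $\langle v_i\rangle_{\mathbb{R}_{\geq 0}}$ is a ray of $B_c(M)$ exactly when $\dim L_{v_i} = 1$, in which case necessarily $L_{v_i} = \mathbb{R}v_i$.

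The local lineality is computed via the star formula \eqref{eqn:stardescription}:
\[
\Star_{v_i}\bigl(\tilde{B}(M)\bigr) \;=\; \tilde{B}(M|\{i\}) \times \tilde{B}(M/\{i\}) \;=\; \mathbb{R} v_i \times \tilde{B}(M/\{i\}).
\]
For a loopfree matroid $N$ with connected components $N_1, \dots, N_k$, the affine Bergman fan has lineality space $\bigoplus_{j=1}^{k} \mathbb{R}\mathbbm{1}_{E(N_j)}$ of dimension $k$, so the lineality of the star above is $\mathbb{R} v_i \oplus \bigoplus_{j} \mathbb{R}\mathbbm{1}_{E(N_j)}$, of affine dimension $k+1$. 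Since $\mathbbm{1}_E$ lies in this subspace, modding out by $\mathbb{R}\mathbbm{1}_E$ gives $\dim L_{v_i} = k$, where $k$ is the number of connected components of $M/\{i\}$. Hence $\langle v_i\rangle_{\mathbb{R}_{\geq 0}}$ is a ray of $B_c(M)$ if and only if $M/\{i\}$ is connected.

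The final step is the matroid-theoretic equivalence: $M/\{i\}$ is disconnected if and only if $M$ is a non-trivial parallel connection along $i$. For the forward direction, given $M/\{i\} = N_1 \oplus N_2$ with both components non-empty, set $E_j := E(N_j) \cup \{i\}$. Simplicity of $M$ forbids elements parallel to $i$, so $M/\{i\}$ is loopfree and $r(N_j) \geq 1$, giving $r(M|E_j) = r(N_j) + 1 \geq 2$. A direct rank-function check (see \cite{oxley}, Section 7.1) shows $M = P_i(M|E_1, M|E_2)$. Conversely, if $M = P_i(M_1, M_2)$ with $r(M_j) \geq 2$, then $|E(M_j)| \geq 2$, so each $E(M_j) \setminus \{i\}$ is non-empty, and $M/\{i\} = M_1/\{i\} \oplus M_2/\{i\}$ is disconnected. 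The main obstacle I anticipate is the careful handling of the projective quotient by $\mathbb{R}\mathbbm{1}_E$ throughout the lineality count and verifying cleanly that $\dim L_{v_i} \geq 2$ really does preclude $\langle v_i\rangle_{\mathbb{R}_{\geq 0}}$ from being a $1$-cone of the coarse structure.
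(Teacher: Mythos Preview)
Your proposal is correct and follows essentially the same approach as the paper: reduce to the coarse structure, compute the star at $v_i$ via Formula~\eqref{eqn:stardescription}, identify the lineality dimension with the number of connected components of $M/i$ (the paper cites \cite[Lemma~2.3]{FrancoisRau} for this, you compute it directly), and conclude via the equivalence ``$M/i$ disconnected $\iff$ $M$ is a non-trivial parallel connection along $i$'' (the paper cites \cite[Propositions~7.1.15(iii) and~7.1.16(ii)]{oxley}, you sketch the argument by hand). The only cosmetic difference is that you work out explicitly what the paper delegates to references.
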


\begin{proof} 
It suffices to consider the coarse fan structure. Suppose that $\langle v_i \rangle_{\mathbb{R}_{\geq 0}}$ is not a cone in $B_c(M)$. Then the star fan of $B_c(M)$
in direction $v_i$ has lineality space of dimension at least two. 
By Formula (\ref{eqn:stardescription}), the star of $B(M)$ at $v_i$ is the matroid fan  $B(M/i \oplus M|i)$.
 By \cite[Lemma 2.3]{FrancoisRau}, the dimension of the lineality space is one less than the number of connected components of the matroid $M/i \oplus M|i$. The matroid $M|i$ is a single coloop and hence connected.  Therefore,  the ray  $\langle v_i \rangle_{\mathbb{R}_{\geq 0}}$ is not in $B_c(M)$ if and only if  $M/i$ is disconnected. By \cite[Proposition 7.1.15 iii) and 7.1.16 ii)]{oxley}, the matroid $M/i$ is disconnected if and only if  $M$ is a non-trivial parallel connection along $i$. This completes the proof. 
\end{proof} 

Note that for simple $M$, every $i \in E$ gives rise to a ray in $B_m(M)$ and $B_f(M)$. 
If $B_{\ast}(M) $ comes from a building set, then all rays of the fan come from flats of $M$. In this case,  we define the rank of a ray in $B_\ast(M)$ as the rank of the associated flat. We use the same terminology for rays in the coarse structure $B_c(M)$.

\begin{definition}\label{def:morphism} 
Let $N_1$ and $N_2$ be lattices and $\Sigma_1$ and $\Sigma_2$ be rational polyhedral fans in $N_1 \otimes \R$ and $N_2 \otimes \R$, respectively.
A morphism of rational polyhedral fans $\phi : \Sigma_1 \to \Sigma_2$ is the restriction of a linear map induced by a lattice morphism $N_1 \to N_2$, which sends cones of $\Sigma_1$ to cones of $\Sigma_2$.  A morphism of fans is an isomorphism if it is a  bijective morphism whose inverse map is also a fan morphism.

We write $\mathrm{Aut}(\Sigma)$ for the group of automorphisms of $\Sigma$ in this sense.
\end{definition}

Let $M_1, M_2$ be loopfree matroids on ground sets $E_1$ and $E_2$, respectively. Note that any isomorphism of matroids $f : M_1 \to M_2$ 
gives rise to  a $\Z$-linear map $\phi_f: \R^{E_1} \to \R^{E_2}$ satisfying $\phi_f(v_i) = v_{f(i)}$. The  isomorphism  descends to the quotients, and the map $\phi_f$ induces an isomorphism of the fine Bergman fans $\phi_f: B_f(M_1) \to B_f(M_2)$ and also of the coarse Bergman fans $\phi_f: B_c(M_1) \to B_c(M_2)$. 
In particular,  there is a map
$$\text{Iso}(M_1, M_2) \to \text{Iso}( B_\ast(M_1), B_\ast(M_2))$$
for $\ast$ denoting either the coarse or fine fan structure. 
If both matroids are simple, the map is injective.

More generally, if there is a matroid isomorphism $f: M_1 \to M_2$, then given any fan structure $B_\ast(M_1)$ on $B(M_1)$ there is a compatible fan structure $B_\ast(M_2)$ on $B(M_2)$ such that the induced map $\phi_f: B_\ast(M_1) \to B_\ast(M_2)$ is a fan  isomorphism. 

\section{Realizable matroids}\label{sec:realizable}
 If the matroid $M$  arises from an arrangement of hyperplanes $\mathcal{A}$ with trivial intersection in projective space over some field,  its Bergman fan is given by the tropicalization of the associated hyperplane complement $\Omega_{\mathcal{A}}$ (over a trivially valued ground field), which is a very affine variety sitting in the intrinsic torus.  
In this case,  every fan structure on the Bergman fan gives rise to a compactification of $\Omega_{\mathcal{A}}$ by taking the closure in the toric variety associated to the fan, see \cite{tevelev}, Proposition 2.3. In this way, the minimal nested set fan induces the (minimal) wonderful compactification introduced by de Concini and Procesi \cite{conpro}.
It is shown in \cite{kuwe} that for essential and connected hyperplane arrangements every dominant morphism $f: \Omega_{\mathcal{A}} \rightarrow \Omega_{\mathcal{A}}$ extends to an  endomorphism of the visible contour compactification, which is induced by the coarse structure on the Bergman fan. Hence it extends to an endomorphism of the (minimal) wonderful compactification whenever the criterion in Lemma \ref{lem:coarsemin} by Feichtner and Sturmfels is satisfied.
In fact, the proof of \cite{kuwe}, Theorem 5.1 shows that every dominant endomorphism $f$ of $\Omega_{\mathcal{A}}$  extends to a endomorphism of the intrinsic torus which is a homomorphism up to translation. Hence by passing to the tropicalization we get a linear map preserving the Bergman fan, i.e.~an endomorphism of $B_c(M)$  in the sense of Definition \ref{def:morphism}. 
Therefore we regard linear maps preserving the different fan structures on the Bergman fan as examples of analogs of birational maps between matroids. 

We will now look at an interesting example of a realizable matroid, namely the matroid given by the $A_n$ type root system:

\begin{example}\label{ex:braid} {\bf The braid arrangement.} The root hyperplanes of the root system of type $A_n$ for $n \geq 2$ induce (after passing to the quotient by their intersection) the  essential arrangement in $\mathbb{P}^n_{K}$ (where $K$ is any algebraically closed field) given by the hyperplanes
$V(x_i)$ for $i = 0 \ldots, n-1$ and $V(x_i - x_j)$ for $i < j$ in $\{0, \ldots, n-1\}$. 
This arrangement $\mathcal{A}$  is called the braid arrangement. 
 Mapping the class of a point $(p_1, \ldots p_{n-1}, 1)\in \mathbb{G}_{m,K}^n$  to the points $p_1, \ldots, p_{n-1}, 1, 0, \infty$ in $\mathbb{P}^1_K$ induces an isomorphism between the  associated hyperplane complement $\Omega_{\mathcal{A}}$ in $\mathbb{G}_{m,K}^n$ and  the moduli space $M_{0,n+2}$ of $n+2$-pointed curves of genus zero. 
The minimal wonderful compactification of $\Omega_{\mathcal{A}}$ is isomorphic to the Deligne-Mumford compactification $\overline{M}_{0, n+2}$. It is shown in \cite{brunomela} that the automorphism group of $\overline{M}_{0, n+2}$ is isomorphic to $S_{n+2}$ and hence equal to the automorphism group of $\Omega_{\mathcal{A}}$. The new transpositions in the symmetric group are realized by Cremona automorphisms.

The matroid $M(\mathcal{A})$ induced by this arrangement is isomorphic to the graphic matroid given by the complete graph $K_{n+1}$. This implies that the automorphism group of $M(\mathcal{A})$ is equal to the symmetric group $S_{n+1}$, i.e.~the Weyl group of the root system.  The Bergman fan of the  matroid $M(\mathcal{A})$ can be identified with the moduli space $M_{0,n+2}^{trop}$ of tropical genus zero curves with $n+2$ marked points, see \cite{ArdilaKlivans}, chapter 4 and \cite{FrancoisRau}, Example 7.2.  In \cite{ap} it is shown that the automorphism group of $M_{0,n+2}^{trop}$ (and also the automorphism group of its compactification) is equal to $S_{n+2}$, i.e.~it is the same as the automorphism group in the algebraic case. This implies that  $Aut(B_c(M(\mathcal{A})))$ is  $S_{n+2}$.
\end{example} 

\begin{proposition}\label{lem:braid} Let $M(\mathcal{A})$ be the matroid given by the braid arrangement. Then the automorphism group of the coarse Bergman fan $B_c(M(\mathcal{A}))$ is generated by the matroid automorphisms and a combinatorial Cremona map. 
\end{proposition}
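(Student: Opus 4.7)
The plan is to combine the identification $\mathrm{Aut}(B_c(M(\mathcal{A}))) \cong S_{n+2}$ from Example \ref{ex:braid} with the Cremona criterion of Theorem \ref{thm:cremiff}. Under that identification the matroid automorphisms give the subgroup $S_{n+1}$ acting on the $n+1$ vertices of $K_{n+1}$, which sits inside $S_{n+2}$ as the stabilizer of the ``extra'' marked point of $M_{0,n+2}^{trop}$. Because $S_{n+1}$ is the point-stabilizer of a $2$-transitive (hence primitive) action of $S_{n+2}$, it is a maximal subgroup, so it is enough to exhibit one Cremona automorphism $\crem_b$ lying outside $S_{n+1}$; the subgroup $\langle S_{n+1},\crem_b\rangle$ will then automatically be all of $\mathrm{Aut}(B_c(M(\mathcal{A})))$.

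I would take $b$ to be the star basis at a fixed vertex $v_0$, that is $b=\{e_1,\dots,e_n\}$ with $e_i=\{v_0,v_i\}$. For every pair $e_i,e_j\in b$ the rank-two flat $F_{e_ie_j}$ is the edge set of the triangle on $\{v_0,v_i,v_j\}$, so $F_{e_ie_j}\setminus\{e_i,e_j\}=\{\{v_i,v_j\}\}$. As $\{i,j\}$ ranges over two-element subsets of $\{1,\dots,n\}$, these single edges exhaust $E\setminus b$ without repetition, which is exactly the partition condition of Theorem \ref{thm:cremiff}. Hence $\crem_b$ is a well-defined automorphism of $B_c(M(\mathcal{A}))$.

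To see that $\crem_b$ is not induced by a matroid automorphism, observe that it sends $v_{e_i}$ to $v_{B_i}$, where $B_i=\cl(b\setminus\{e_i\})$ is the edge set of the complete graph on $V(K_{n+1})\setminus\{v_i\}$: a flat of rank $n-1$ and cardinality $\binom{n}{2}$. Any matroid isomorphism $\sigma$ acts on $\R^E/\R\mathbbm{1}$ by $v_{e_i}\mapsto v_{\{\sigma(e_i)\}}$, the class of a unit vector, so for $n\ge 3$ an easy integrality check modulo $\mathbbm{1}$ shows that the class of $v_{B_i}$ is not of this form. Combined with the maximality above, this yields $\langle S_{n+1},\crem_b\rangle = \mathrm{Aut}(B_c(M(\mathcal{A})))$. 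The edge case $n=2$ is degenerate: then $M(\mathcal{A})=U_{2,3}$ and its Bergman fan is a tripod whose lattice-fan automorphism group already coincides with the matroid automorphism group $S_3$, so the proposition holds trivially. The one technical point in the main case is the verification of the partition hypothesis of Theorem \ref{thm:cremiff} for the star basis; the remainder is Example \ref{ex:braid}, a rank count, and standard symmetric-group facts.
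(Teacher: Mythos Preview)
Your argument is correct and follows the same skeleton as the paper: take the star basis of the graphic matroid $M(K_{n+1})$, verify the partition hypothesis of Theorem \ref{thm:cremiff} (each non-star edge $\{v_i,v_j\}$ lies in exactly one triangle through $v_0$), and combine with the identification $\mathrm{Aut}(B_c(M(\mathcal{A})))\cong S_{n+2}$ from Example \ref{ex:braid}. The one place where you diverge is the closing group-theoretic step. The paper pins down $\crem_b$ explicitly as the transposition $(n{+}1\;\,n{+}2)$ under the moduli identification, so that $S_{n+1}$ together with this transposition visibly generates $S_{n+2}$. You instead argue abstractly: the matroid automorphisms form the point stabilizer $S_{n+1}\subset S_{n+2}$, which is maximal, and your integrality check shows $\crem_b$ lies outside it. Both routes are valid; the paper's is more concrete (it tells you exactly which symmetry $\crem_b$ is), while yours avoids tracking the moduli dictionary at the cost of invoking maximality and the fact, used implicitly, that the image of $\mathrm{Aut}(M)$ in $S_{n+2}$ really is a point stabilizer rather than some other copy of $S_{n+1}$.
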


\begin{proof} 
We have to make some of the identifications discussed above explicit. Let $M = M(\mathcal{A})$ be the matroid given by the braid arrangement for $A_n$. First of all, let us identify $E= E(M)$ with the set $[\binom{n}{2}] = \{1, \ldots, \binom{n}{2} \}$ (where we write two-element subsets as ordered pairs) by the following map:
\[H_{ij} \mapsto (i+1, j+1) \mbox{ for all } i < j \mbox{ in } \{0, \ldots, n-1\}\]
and 
\[H_i \mapsto (i+1,n+1) \mbox{ for all } i \mbox{ in }  \{0, \ldots, n-1\}.\]
The elements $b_i = (i,n+1)$ for $i = 1, \ldots n$  form a basis of $M$, which satisfies the condition from Theorem \ref{thm:cremiff}. 
Hence $\crem_b$ is an automorphism of $B_c(M)$. 

The identification of $B_c(M)$ with $M_{0,n+2}^{trop}$ (e.g. spelled out in \cite{chmr}, Section 2.1.2) now shows that $\crem_b$ induces the transposition of $n+1$ and $n+2$ in the natural action of $S_{n+2}$ on $M_{0,n+2}^{trop}$. Moreover, the automorphism group $S_{n+1}$ of $M$ acts on $M_{0,n+2}^{trop}$ as permutation of the first $n+1$ markings. This implies our claim. \end{proof} 

\begin{example}\label{ex:4points}
Notice that in general not all automorphisms of $B(M(\A))$ lift to automorphisms of $\Omega_{\A} $.  Consider for example the arrangement $\A = \{0, 1, \infty, p\}$ on $\mathbb{P}^1$. The automorphism group of $B(M(\A))$ is the symmetric group $S_4$. 
On the other hand, any automorphism of $\Omega_{\A}$ extends to an automorphism  of $\mathbb{P}^1$ and is therefore  contained in  $\text{PGL}_2$ and thus preserves the cross ratio of $(0, 1, \infty, p)$.  On the other hand, for appropriate choices of permutations in $S_4$ and points $p$ the cross ratio will not be preserved. Hence automorphisms of Bergman fans do not always arise from automorphisms of complements.
\end{example}

\section{Chow rings of matroids}\label{sec:Chowrings}

Let $\Sigma$ be a rational polyhedral fan in $ N_{\R}= N \otimes \R$ where $N$ is a rank $n$ lattice. Let $N^*$ denote the dual lattice. 

\begin{definition}\label{def:ChowRing}
The combinatorial Chow ring of $\Sigma$ is the  ring

$$A^*(\Sigma) = \frac { \mathbb{Z} [x_{\rho} \ | \ \rho \in \Sigma^1 ]} { I + J},$$
where $I$ is the ideal generated by squarefree monomials coming from non-faces of $\Sigma$, i.e.~ 
$$I = ( x_{\rho_1} \dots x_{\rho_k} \ | \   \rho_1, \dots, \rho_k \in \Sigma^1 \text{ pairwise distinct, not spanning a cone }) $$ 
and $J$ is the ideal 
$$J = \big (\sum_{\rho \in \Sigma^1}  m (v_{\rho}) x_{\rho} \ | \ m \in N^* \big),$$
where $v_{\rho}$ is the primitive integer vector in direction $\rho$. 
\end{definition}

An analogous ring was introduced by Danilov in \cite{Danilov} as the Chow ring of a complete non-singular toric variety. 
The combinatorial Chow ring is the Chow ring of a non-singular toric variety of a unimodular fan $\Sigma $ even when the fan is not complete by \cite[Section 3.1]{Brion}.  When the fan $\Sigma $  is unimodular, quasi-projective,  and has support $B(M)$  the ring $A^*(\Sigma)$ has Poincar\'e duality, Hard Lefschetz, and satisfies the Hodge Riemann bilinear relations \cite{AminiPiquerez}.

\begin{prop}\label{prop-chow}  An isomorphism of rational polyhedral fans $\phi: \Sigma_1 \to  \Sigma_2$  induces an isomorphism of graded rings  $\phi_*: A^*(\Sigma_1) \to A^*(\Sigma_2)$. 
 \end{prop}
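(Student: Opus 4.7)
The plan is to build $\phi_*$ on the level of polynomial rings first, then check that it respects both the non-face ideal and the linear-relation ideal. Since $\phi: \Sigma_1 \to \Sigma_2$ is by Definition \ref{def:morphism} the restriction of a linear map induced by a lattice morphism $\psi: N_1 \to N_2$, and since $\phi$ is a fan isomorphism with fan-morphism inverse, $\psi$ must itself be a lattice isomorphism (restricting over any full-dimensional cone of $\Sigma_1$ forces this). In particular $\phi$ induces a bijection $\Sigma_1^1 \to \Sigma_2^1$ of the ray sets, and for every ray $\rho \in \Sigma_1^1$ the image $\psi(v_\rho)$ is a primitive lattice vector of $N_2$ lying on $\phi(\rho)$, hence $\psi(v_\rho) = v_{\phi(\rho)}$.

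Next I define the graded ring homomorphism
\[
\widetilde{\phi_*}: \Z[x_\rho \mid \rho \in \Sigma_1^1] \longrightarrow \Z[x_{\rho'} \mid \rho' \in \Sigma_2^1], \qquad x_\rho \longmapsto x_{\phi(\rho)}.
\]
This is tautologically an isomorphism of graded polynomial rings, with inverse induced by $\phi^{-1}$. The first thing to check is that $\widetilde{\phi_*}$ sends the non-face ideal $I_1$ into $I_2$: a collection of pairwise distinct rays $\rho_1, \dots, \rho_k$ of $\Sigma_1$ spans a cone of $\Sigma_1$ if and only if $\phi(\rho_1), \dots, \phi(\rho_k)$ span a cone of $\Sigma_2$, because $\phi$ is a bijection of fans. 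Therefore a squarefree non-face monomial of $\Sigma_1$ is mapped exactly to a squarefree non-face monomial of $\Sigma_2$, and $\widetilde{\phi_*}(I_1) = I_2$.

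For the linear-relation ideal $J_1$, take $m \in N_1^*$ and consider the dual lattice isomorphism $\psi^\vee: N_2^* \to N_1^*$, $m' \mapsto m' \circ \psi$; set $m = \psi^\vee(m')$ so that $m(v_\rho) = m'(\psi(v_\rho)) = m'(v_{\phi(\rho)})$. Then
\[
\widetilde{\phi_*}\!\left(\sum_{\rho \in \Sigma_1^1} m(v_\rho)\, x_\rho\right) \;=\; \sum_{\rho \in \Sigma_1^1} m'(v_{\phi(\rho)})\, x_{\phi(\rho)} \;=\; \sum_{\rho' \in \Sigma_2^1} m'(v_{\rho'})\, x_{\rho'} \;\in\; J_2.
\]
Since $\psi^\vee$ is a bijection on dual lattices, every generator of $J_2$ arises this way, so $\widetilde{\phi_*}(J_1) = J_2$. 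Consequently $\widetilde{\phi_*}$ descends to a well-defined graded ring homomorphism $\phi_*: A^*(\Sigma_1) \to A^*(\Sigma_2)$, and running the same argument for $\phi^{-1}$ produces a two-sided inverse, giving the desired isomorphism.

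The only real point requiring care is the compatibility with the linear-relation ideal; everything else is formal bookkeeping from the definition of a fan isomorphism. I do not foresee a serious obstacle, but it is worth noting explicitly that the assumption that $\phi$ come from a lattice (not merely $\R$-linear) isomorphism is what guarantees primitive-to-primitive on the rays and hence that the integral structures on $A^*$ match up.
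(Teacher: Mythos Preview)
Your proof is correct and follows essentially the same approach as the paper: define the map on polynomial generators via the ray bijection, then check that the non-face ideal and the linear-relation ideal are carried to their counterparts (the latter via the dual lattice isomorphism). Your write-up is in fact slightly more explicit than the paper's, in that you spell out why $\psi(v_\rho) = v_{\phi(\rho)}$ (primitive to primitive under a lattice isomorphism), which the paper leaves implicit.
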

\begin{proof}
Denote the Chow rings of $\Sigma_i$ by 
$$A^*(\Sigma_i) = \frac { \mathbb{Z} [x_{\rho} \ | \ \rho \in \Sigma_i^1 ]} { I_i + J_i},$$
where the ideals $I_i$ and $J_i$ are described above. 

The map $\phi$ maps rays of $\Sigma_1$ bijectively to rays of $\Sigma_2$ which induces a  
bijection between the generators of the two Chow rings. Moreover, the ideal $I_1$ from the non-faces of $\Sigma_1$ 
is sent exactly to the corresponding ideal $I_2$ of $\Sigma_2$. 
To show that $J_1$ maps to $J_2$ 
consider a relation
$ \sum_{\rho \in \Sigma_1^1}  m (v_{\rho}) x_{\rho}$ for some $m \in N^*$. 
Then $$ \sum_{\rho \in \Sigma_1^1}  m(v_{\rho}) x_{\phi(\rho)} = \sum_{\rho' \in \Sigma^1_2} m (\phi^{-1}v_{\rho'})   x_{\rho'} =  \sum_{\rho' \in \Sigma^1_2} \phi^*m (v_{\rho'})   x_{\rho'},$$
therefore $\phi(J_1) \subseteq J_2$. Reversing the argument shows that $\phi(J_1) =  J_2$. Hence the map on the Chow rings induced by $\phi$ is an isomorphism. 
\end{proof}

Let us now consider the fine fan structure on a Bergman fan of a loopfree matroid $M$. For brevity we will call the Chow ring of $B_f(M)$ simply the Chow ring of the  matroid $M$ and denote it by $A^*(M) $.  
In terms of generators and relations this ring is
$$A^*(M) = \frac { \langle x_F \ | \ F \text{ proper flat of } M\rangle_{\mathbb{Z}}}{ I + J},$$
where $I$ is the ideal generated by the monomials $x_Fx_G$ for $F \not \subseteq G$ and $G \not \subseteq F$. 

Set $E = \{0, \dots, n\}$, then a basis for  $N$ is   $\{v_1, \dots, v_n\}$, where $v_k \in N$ is the image of the standard basis vector of $\Z^{E}$ under the quotient. 
We claim  the ideal $J$ is generated by the degree $1$ polynomials
$$\sum_{ i \in F} x_F - \sum_{0  \in G} x_G,$$
for all $i \in E \backslash 0$. 
Recall that the matroid fan $B_f(M)$ is defined with respect to the lattice $N = \Z^{E} / \Z \mathbbm{1}$. A collection of generators of the ideal $J$ is obtained by considering the relations in Definition \ref{def:ChowRing} coming from a basis of $N^*$. The relation above is obtained by taking $v_k^* \in N^*$ in Definition \ref{def:ChowRing}.

This ring is the Chow ring of the  \emph{maximal wonderful compactification} of the complement of a hyperplane arrangement in projective space when $M$ is representable over $\C$, see \cite{conpro}, \cite{feiyuz}. The maximal wonderful compactification is obtained from projective space by blowing up all intersections of the hyperplanes  in $\A$ starting from the intersections which are points and proceeding by dimension.

If $M$ is a simple matroid of rank $d+1$,  then by \cite[Proposition 5.10]{AHK} 
there  is an isomorphism $$\deg : A^d(M)  \to \mathbb{Z}$$ such that $\deg(  x_{F_1} \dots x_{F_d}) = 1$ whenever $F_1 \subsetneq \dots  \subsetneq F_d$ is a maximal flag of flats.  

Moreover, there is the following formula for the degree map on monomials from  \cite{eur} in terms of the coefficients of the characteristic polynomial of a matroid. 
We write the reduced characteristic polynomial of a loopfree matroid $M$  as
$$\tilde{\chi}_M(t) = \mu^0(M) t^{rank M - 1} - \mu^1(M) t^{rank N - 2} + \dots \pm \mu^{rank M - 1}(M),$$
denoting its $k$-th unsigned coefficient by  $ \mu^{k}(M)$.
The reduced characteristic polynomial is always monic, hence $\mu_0 = 1$.

\begin{thm}\cite[Theorem 3.2]{eur} \label{formula:Eur}
Let $F_1  \subsetneq F_2 \subsetneq \dots \subsetneq F_k$ denote a flag of proper  flats of $M$ where $r_i = r(F_i)$ then for all $d_i\geq 0$ with $ r(M)-1 = d = \sum_{i = 1}^k d_i $ 
\begin{equation}
\deg(x_{F_1}^{d_1} \dots x_{F_k}^{d_k}) = (-1)^{d-k}\prod_{i = 1}^k \binom{d_i - 1 }{\tilde{d}_i - r_i} \mu^{\tilde{d}_i - r_i}(M| F_{i+1} / F_i)
\end{equation}
where $\tilde{d_i} = \sum_{j = 0}^i d_j$ 
\end{thm}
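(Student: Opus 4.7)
The plan is induction on the ``excess degree'' $d-k = \sum_{i=1}^k(d_i-1)\geq 0$, reducing the computation of the degree on $A^*(M)$ to degree computations on Chow rings of smaller matroids via the star-fan decomposition of the Bergman fan. In the base case $d=k$ every $d_i = 1$, so $\tilde d_i = i$; the binomial $\binom{0}{i-r_i}$ vanishes unless $r_i=i$, in which case $\mathcal{F}$ is a maximal flag of proper flats, each $\mu^0(M|F_{i+1}/F_i)=1$, and the right-hand side equals $1$. This matches the normalization of the degree map from \cite[Proposition 5.10]{AHK}.

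For the inductive step I would fix some index $i$ with $d_i\geq 2$. The central geometric input is the star-fan identification \ref{eqn:stardescription}, namely $\Star_{v_{F_i}}(B_f(M)) \cong B_f(M|F_i) \times B_f(M/F_i)$. By Proposition \ref{prop-chow}, this identifies the Chow ring of the star with $A^*(M|F_i) \otimes A^*(M/F_i)$, and the associated pullback lets one peel off a factor of $x_{F_i}$ at the cost of moving to this smaller Chow ring. The flats $F_1,\ldots,F_{i-1}$ restrict to flats of $M|F_i$, while $F_{i+1},\ldots,F_k$ pass to flats of $M/F_i$, so the restricted monomial splits as a tensor product along the flag. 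Induction applied to each factor produces all the $\mu$-coefficients $\mu^{\tilde d_j - r_j}(M|F_{j+1}/F_j)$ for $j\neq i$, while the $i$-th factor $\binom{d_i - 1}{\tilde d_i - r_i}\mu^{\tilde d_i - r_i}(M|F_{i+1}/F_i)$ emerges by repeatedly rewriting $x_{F_i}$ via the linear relations in $J$ in terms of classes $x_G$ for $G$ adjacent to $F_i$, and then counting how many copies of $x_{F_i}$ can be absorbed into each piece of the star decomposition.

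I expect two main obstacles. First, the inductive reduction ultimately rests on a single-flat case, namely a formula of the form $\deg_M(x_F^{r(M)-1}) = \pm\,\mu^{r(M)-1-r(F)}(M/F)$. Showing that this equals, up to sign, a coefficient of the reduced characteristic polynomial is the geometric heart of the result and requires the identification of the Hilbert series of $A^*(M)$ with $\tilde\chi_M(t)$ from \cite{feiyuz} together with the tropical intersection theory of \cite{AHK}. Second, and more delicate, is the combinatorial bookkeeping: tracking how the sign $(-1)^{d-k}$ and precisely the binomial $\binom{d_i-1}{\tilde d_i - r_i}$ arise after applying the relations in $J$ amounts to verifying that certain alternating M\"obius-type sums telescope to the stated closed form. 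Getting the signs and binomial coefficients to match the clean expression in the theorem after each application of the inductive step is the principal technical challenge.
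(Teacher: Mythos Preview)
The paper does not give a proof of this statement: Theorem~\ref{formula:Eur} is quoted verbatim from \cite[Theorem~3.2]{eur} and used as a black box (for instance in the proofs of Lemma~\ref{lem:localtotallydisconnected} and Theorem~\ref{thm:fineMatroidAuto}). There is therefore no ``paper's own proof'' to compare your proposal against.

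As a standalone strategy your outline is plausible and in the right spirit, but a few points deserve care. The base case is correct. For the inductive step, the identification $\Star_{v_{F_i}}(B_f(M))\cong B_f(M|F_i)\times B_f(M/F_i)$ does yield a ring map relating $A^*(M)$ to $A^*(M|F_i)\otimes A^*(M/F_i)$, but ``peeling off a factor of $x_{F_i}$'' has to be made precise: what one actually uses is that $\deg_M(x_{F_i}\cdot\alpha)$ equals the degree, computed in the tensor product, of the restriction of $\alpha$ to the star (this is the content of the Gysin-type compatibility in \cite{AHK}). Once this is set up, the monomial does split as you describe and induction runs. Your first stated obstacle, the single-flat computation $\deg_M(x_F^{d})=(-1)^{d-1}\mu^{d-r(F)}(M/F)$, is indeed the crux; however it is not a consequence of the Hilbert-series identification of \cite{feiyuz} as you suggest, but rather is proved (in \cite{eur} and already implicitly in \cite{AHK}) by rewriting $x_F$ via the linear relations in $J$ and inducting on rank, which is exactly the mechanism you allude to in your second obstacle. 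So the two obstacles you list are really one and the same computation, and that computation \emph{is} the proof.
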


For more general unimodular  fan structures $\Sigma$ on $B(M)$, Amini and Piquerez show that 
the Chow ring of $\Sigma$ still satisfies  $A^d(\Sigma) \cong \mathbb{Z}$ where $d+1$ is the rank of $M$ \cite[Theorem 1.2]{AminiPiquerez}. Moreover, the isomorphism is given by the degree map $$\deg : A^d(\Sigma)  \to \mathbb{Z}$$
sending $x_{\rho_1}x_{\rho_2} \dots x_{\rho_d}$ to $1$, whenever $\rho_1, \dots, \rho_d \in \Sigma^1$ are rays of a $d$-dimensional face of $\Sigma$, see \cite[Section 3.4]{AminiPiquerez}.

\begin{proposition}\label{cor:chowdeg} Let $M_1$ and $M_2$ be loopfree matroids, and 
let $\Sigma_1, \Sigma_2$ be  unimodular fans of dimension $d$ whose supports are respectively $B(M_1)$ and $B(M_2)$. If $\phi: \Sigma_1 \to \Sigma_2$ is an isomorphism of fans then there exists an isomorphism of Chow rings $\phi_{\ast} : A^*(\Sigma_1) \to A^*(\Sigma_2)$ which is compatible with the degree maps. More precisely, for any $\alpha \in A^d(\Sigma_1)  $ we have $\deg (\alpha ) = \deg (  \phi(\alpha)).$
\end{proposition}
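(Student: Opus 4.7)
The plan is to leverage Proposition \ref{prop-chow}, which already supplies an isomorphism of graded rings $\phi_\ast \colon A^\ast(\Sigma_1) \to A^\ast(\Sigma_2)$, and then verify compatibility with the degree maps on the top graded piece $A^d$. Since $\phi_\ast$ is a graded ring isomorphism, it restricts to an isomorphism $A^d(\Sigma_1) \to A^d(\Sigma_2)$, and by the Amini--Piquerez result cited just before the proposition both of these groups are identified with $\Z$ via the respective degree maps. The task therefore reduces to exhibiting a single element $\alpha \in A^d(\Sigma_1)$ of degree $1$ whose image $\phi_\ast(\alpha)$ also has degree $1$.

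The key observation is that a fan isomorphism, in the sense of Definition \ref{def:morphism}, is by construction compatible with the face structure: it induces a bijection between the rays of $\Sigma_1$ and the rays of $\Sigma_2$ that extends to a bijection between cones of each dimension. In particular it carries the set of $d$-dimensional cones of $\Sigma_1$ bijectively onto the set of $d$-dimensional cones of $\Sigma_2$. Combined with the explicit description of $\phi_\ast$ from the proof of Proposition \ref{prop-chow}, which sends generators by $x_\rho \mapsto x_{\phi(\rho)}$, this means that for any top-dimensional cone $\sigma$ of $\Sigma_1$ with rays $\rho_1, \ldots, \rho_d$,
$$\phi_\ast\bigl(x_{\rho_1}\cdots x_{\rho_d}\bigr) \;=\; x_{\phi(\rho_1)}\cdots x_{\phi(\rho_d)},$$
and the rays $\phi(\rho_1), \ldots, \phi(\rho_d)$ span a top-dimensional cone of $\Sigma_2$. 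By the description of the degree map recalled before the proposition, both monomials have degree $1$. Since any nonzero element of $A^d(\Sigma_1) \cong \Z$ is an integer multiple of such a monomial, linearity of $\phi_\ast$ forces $\deg(\phi_\ast(\alpha)) = \deg(\alpha)$ for every $\alpha \in A^d(\Sigma_1)$.

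There is no real obstacle here: once Proposition \ref{prop-chow} is in hand and once one knows that the degree map detects $1$ on any monomial coming from a top-dimensional cone, the argument is essentially a bookkeeping verification using the fact that fan isomorphisms preserve the face lattice. The only mildly subtle point is to avoid trying to compare degrees monomial-by-monomial on arbitrary products $x_{\rho_1}\cdots x_{\rho_d}$ whose rays do not span a cone, since such products may either vanish by the Stanley--Reisner relations in $I$ or reduce via the linear relations in $J$ to combinations of top-cone monomials; working directly on $A^d \cong \Z$ sidesteps this issue.
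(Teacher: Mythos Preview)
Your proof is correct and follows essentially the same approach as the paper: invoke Proposition~\ref{prop-chow} for the ring isomorphism, then observe that rays $\rho_1,\dots,\rho_d$ span a top-dimensional cone of $\Sigma_1$ if and only if $\phi(\rho_1),\dots,\phi(\rho_d)$ span one in $\Sigma_2$, so the degree-$1$ monomials match up. The paper's proof is terser and leaves the reduction to a single generator of $A^d\cong\Z$ implicit, but the argument is the same.
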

\begin{proof} By Proposition \ref{prop-chow}, the isomorphism $\phi$ of fans induces an isomorphism $\phi_{\ast} : A^*(\Sigma_1) \to A^*(\Sigma_2)$ of Chow groups. The map $\phi_{\ast}$ is compatible with the degree map since rays  $\rho_1, \dots, \rho_d $ of $\Sigma_1$  generate a $d$-dimensional face of $\Sigma_1$ if and only if the rays  $\phi(\rho_1), \dots, \phi(\rho_d)$ generate a $d$-dimensional face of $\Sigma_2$.
\end{proof}

\section{Orlik-Solomon algebra and the characteristic polynomials of matroids}\label{sec:Orlik}
From the previous section and the intersection formula in Equation \ref{formula:Eur}, we 
see that the reduced characteristic polynomials of minors of $M$  determine the intersection numbers in $A_{\ast}(M)$. By Proposition \ref{prop-chow}, these intersection numbers must be preserved under an isomorphism of fine fan structures. 
In fact, we show in this section that if an isomorphism between any fan structures exists the matroids must have the same characteristic  polynomial.

The following proposition describes the  coefficients of the reduced characteristic polynomial in terms of the support of the Bergman fan of $M$. We define $$\mathcal{F}_{p}(B(M))  =\sum_{\sigma \subset B_{\ast}(M)} \bigwedge^{p} \langle \sigma \rangle  \subset \bigwedge^p (\R^E / \R \mathbbm{1}),$$
where $\langle \sigma \rangle  \subset \R^E / \R \mathbbm{1}$ denotes the linear span of the cone $\sigma$, and $B_{\ast}(M)$ denotes the choice of any fan structure on $B(M)$.
We set 
$\mathcal{F}_{0}(B(M)) = \R$.

\begin{proposition}\label{prop:OScoeff}
If $M$ is a loopfree matroid, then 
 $$\mu^p(M)  = \dim \mathcal{F}_{p} (B(M)).$$   
\end{proposition}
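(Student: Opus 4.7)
The plan is as follows. The first step is to verify that $\mathcal{F}_p(B(M))$ is independent of the fan structure on $B(M)$: any refinement only introduces new cones whose linear span is contained in a coarser cone's span, while the linear span of every cone in the coarser structure is recovered from any top-dimensional subcone in the refinement. I therefore work with the fine fan structure $B_f(M)$, whose cones correspond to chains $F_1\subsetneq\cdots\subsetneq F_k$ of proper nonempty flats of $M$ with linear span $\spa\{v_{F_1},\ldots,v_{F_k}\}$ in $\R^E/\R\mathbbm{1}$. Consequently $\mathcal{F}_p(B(M))$ is the subspace of $\bigwedge^p(\R^E/\R\mathbbm{1})$ spanned by wedges $v_{F_1}\wedge\cdots\wedge v_{F_p}$ over all chains of proper nonempty flats of length $p$.

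The main argument is by induction on $|E|$, matching the deletion-contraction recursion $\mu^p(M)=\mu^p(M\setminus e)+\mu^{p-1}(M/e)$, which holds for any $e\in E$ that is not a coloop and is a direct consequence of $\chi_M=\chi_{M\setminus e}-\chi_{M/e}$. Fix such an $e$ and let $F_\star=\cl(\{e\})$. I split the generating chains of length $p$ into those containing $F_\star$, which must then have $F_1=F_\star$ because $r(F_\star)=1$, and those avoiding $F_\star$. Using the order-isomorphism between flats of $M$ containing $F_\star$ and flats of $M/e$, together with the telescoping identity $v_{F_1}\wedge\cdots\wedge v_{F_p}=v_{G_1}\wedge\cdots\wedge v_{G_p}$ for $G_i=F_i\setminus F_{i-1}$, the first family contributes a copy of $v_{F_\star}\wedge\mathcal{F}_{p-1}(B(M/e))$ inside $\mathcal{F}_p(B(M))$. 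Analyzing the relationship between flats of $M$ avoiding $e$ and flats of $M\setminus e$, the second family contributes a copy of $\mathcal{F}_p(B(M\setminus e))$. The base cases $r(M)\leq 1$ are immediate, since there are no proper nonempty flats and $\mathcal{F}_p=0$ for $p\geq 1$, matching $\mu^p=0$.

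The main obstacle is to prove that the sum of the two contributions is a direct sum in $\bigwedge^p(\R^E/\R\mathbbm{1})$, so that the dimensions add to yield the desired recursion. The quotient by $\R\mathbbm{1}$ can a priori relate wedges from the two families, and disentangling them requires a careful filtration argument -- for example, tracking the coefficient of $v_e$ in $\bigwedge^p\R^E$ before passing to the quotient, since wedges of the first type carry a factor of $v_{F_\star}$ while wedges of the second type do not. If this direct route becomes unwieldy, an alternative is to construct an explicit basis of $\mathcal{F}_p(B(M))$ indexed by no-broken-circuit sets of size $p$ avoiding a fixed reference element (with respect to some ordering of $E$): to such a set $\{s_1<\cdots<s_p\}$ one associates the chain $\cl(s_1)\subsetneq\cl(s_1,s_2)\subsetneq\cdots\subsetneq\cl(s_1,\ldots,s_p)$ and its wedge, and shows these wedges are linearly independent and span $\mathcal{F}_p(B(M))$. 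Since the count of such NBC sets is $\mu^p(M)$ by the classical interpretation of the reduced characteristic polynomial coefficients via the reduced Orlik--Solomon algebra, this would complete the proof.
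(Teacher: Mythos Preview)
The paper's proof is a two-line citation: Zharkov's theorem identifies the dual of $\mathcal{F}_p(B(M))$ with the degree-$p$ part of the Orlik--Solomon algebra $OS^p(M)$, and the classical formula $\dim OS^p(M)=\mu^p(M)$ (Orlik--Terao) finishes it. Your approach is genuinely different, attempting a self-contained deletion--contraction argument. That is a reasonable strategy in principle, but your decomposition has a real gap.

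The problem is your claim that the second family (chains not containing $F_\star$) contributes a copy of $\mathcal{F}_p(B(M\setminus e))$. Flats in such a chain can still contain $e$, and more importantly the span of this family is typically \emph{larger} than $\dim\mathcal{F}_p(B(M\setminus e))$. Already for $p=1$ (with $M$ simple and $F_\star=\{e\}$): the span of $v_F$ over proper nonempty flats $F\neq\{e\}$ contains every $v_i$ for $i\neq e$, and since $v_e=-\sum_{i\neq e}v_i$ in $\R^E/\R\mathbbm{1}$, this span is all of $\R^E/\R\mathbbm{1}=\mathcal{F}_1(B(M))$. So your two families do not form a direct sum, and the second does not have the dimension you need. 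The correct short exact sequence has $\mathcal{F}_p(B(M\setminus e))$ as a \emph{quotient} of $\mathcal{F}_p(B(M))$ (via the coordinate projection $\pi$ forgetting $e$), not as a subspace spanned by the second family; the nontrivial step is then showing that $\ker(\bigwedge^p\pi)\cap\mathcal{F}_p(B(M))$ equals $v_e\wedge\mathcal{F}_{p-1}(B(M/e))$, which is exactly the dual of the injectivity in the Orlik--Solomon deletion--restriction sequence. You also need to treat separately the case where every element of $M$ is a coloop, since then no admissible $e$ exists for the inductive step.

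Your fallback plan (an explicit NBC basis for $\mathcal{F}_p(B(M))$) is correct and is essentially what Zharkov proves; carrying it out would reprove the cited result rather than bypass it.
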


\begin{proof}
By \cite[Theorem 4]{Zharkov}, the dual of $\mathcal{F}_{p} (B(M))$ is  isomorphic to $OS^p(M) $ where $OS^*(M)$ denotes the Orlik-Solomon algebra of $M$ over $\R$. Moreover, by \cite{OrlikTerao}  the reduced characteristic polynomial of $M$ is 
$$\tilde{\chi}_M(t) =  \sum_{p = 0}^{r(M) -1} (-1)^{r(M) -p-1} \dim OS^p(M)t^{r(M) -p-1}.$$ 
This completes the proof.
\end{proof}

\begin{example}\label{ex:mu1}
We can use Proposition \ref{prop:OScoeff} to determine the coefficient $\mu^1(M)$ for any simple matroid $M$. 
Since $M$ has no  parallel elements,  then the vectors $v_i$ are contained in $\tilde{B}(M)$ for any $i \in E$. Upon taking the projective Bergman fan we have  
$\dim \F_1(M) = |E| -1.$ Therefore, we have $\mu^1(M) = |E|-1$. 
\end{example}

\begin{proposition}\label{prop:samecharpoly}
If $\phi : B(M_1)  \to B(M_2)$ is an isomorphism of supports of Bergman fans, then $\dim \mathcal{F}_{p}(B(M_1)) = \dim \mathcal{F}_{p}(B(M_2))$ for all $p$, and $M_1$ and $M_2$ have the same reduced characteristic polynomial. 
\end{proposition}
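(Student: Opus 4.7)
The plan is to invoke Proposition \ref{prop:OScoeff} and show that the linear isomorphism $\phi$ induces linear isomorphisms between the vector spaces $\mathcal{F}_p(B(M_i))$, forcing the coefficients $\mu^p$ to coincide.

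By Definition \ref{def:morphism}, the isomorphism $\phi$ is (the restriction of) a linear isomorphism $\phi : \R^{E_1}/\R\mathbbm{1} \to \R^{E_2}/\R\mathbbm{1}$. I would first fix any fan structure $B_\ast(M_1)$ on $B(M_1)$; since $\phi$ is linear and maps $B(M_1)$ bijectively onto $B(M_2)$, pushing forward the cones of $B_\ast(M_1)$ produces a fan structure on $B(M_2)$ with the property that $\phi(\langle \sigma \rangle) = \langle \phi(\sigma) \rangle$ for every cone $\sigma$ of $B_\ast(M_1)$.

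Next, the induced isomorphism $\bigwedge^p \phi$ on $p$-th exterior powers restricts to an isomorphism $\bigwedge^p \langle \sigma \rangle \to \bigwedge^p \langle \phi(\sigma) \rangle$ for each cone $\sigma$. Summing over all cones of $B_\ast(M_1)$ yields an isomorphism from $\mathcal{F}_p(B(M_1))$ onto $\mathcal{F}_p(B(M_2))$, where the latter is computed with respect to the pushed-forward fan structure. By Proposition \ref{prop:OScoeff}, however, $\dim \mathcal{F}_p(B(M))$ equals the intrinsic matroid invariant $\mu^p(M)$ and is therefore independent of the particular fan structure chosen in the definition. Hence $\dim \mathcal{F}_p(B(M_1)) = \dim \mathcal{F}_p(B(M_2))$ for all $p$.

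Applying Proposition \ref{prop:OScoeff} once more gives $\mu^p(M_1) = \mu^p(M_2)$ for every $p$. Combined with the observation that $r(M_1) = r(M_2)$, both equal to one plus the common dimension of the supports, this forces the reduced characteristic polynomials $\tilde{\chi}_{M_i}(t) = \sum_p (-1)^p \mu^p(M_i)\, t^{r(M_i)-1-p}$ to coincide. I do not anticipate any serious obstacle: the argument is essentially formal once one observes that $\phi$ sends linear spans of cones to linear spans of their images. The only subtle point is the independence of $\dim \mathcal{F}_p$ from the fan structure, which is already built into Proposition \ref{prop:OScoeff}.
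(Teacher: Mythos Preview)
Your proposal is correct and follows essentially the same approach as the paper: both argue that the linear isomorphism $\phi$ induces isomorphisms on exterior powers which carry the generators $\bigwedge^p\langle\sigma\rangle$ of $\mathcal{F}_p(B(M_1))$ to those of $\mathcal{F}_p(B(M_2))$, and then invoke Proposition~\ref{prop:OScoeff} to conclude equality of the reduced characteristic polynomials. You are somewhat more explicit about pushing forward the fan structure and about the independence of $\dim\mathcal{F}_p$ from that choice, but the substance is the same.
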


\begin{proof}
The map $\phi$ induces isomorphisms on the  exterior powers $\phi_{\ast} : \bigwedge^p \R^{E_1} \to \bigwedge^p \R^{E_2}$. 
Moreover, since  $\phi$ preserves the supports of the fans it sends the generators of $\mathcal{F}_{p}(B(M_1)) $ to the generators of $\mathcal{F}_{p}(B(M_2))$ for all $p$. This proves the claim about the dimensions. 
This claim regarding the characteristic polynomial follows from Proposition \ref{prop:OScoeff}.
\end{proof}

Non-isomorphic matroids with the same characteristic polynomials have been studied in \cite{EschenbrennerFalk}. There are known examples of combinatorially equivalent arrangements of lines in $\C P^2$ with non-homotopic complements \cite{Rybnikov}. Since they are combinatorially equivalent,  the arrangements define the same matroid, and hence have the same characteristic polynomial. The above proposition implies that classifying matroids up to isomorphisms of their fans is a priori a finer notion of equivalence than up to equality of their characteristic polynomials. 

The Chern-Schwartz-MacPherson (CSM) cycles of a matroid were introduced in \cite{LdMRS}. Here they are defined as 
 Minkowski weights supported on the matroid fan equipped with the fine
structure. Recall that a $k$-dimensional Minkowski weight on a fan $\Sigma$ is an assignment $w : \Sigma^{k} \to \Z$ satisfying a \emph{balancing condition} on all faces of codimension one \cite{FultonSturmfels}.  In \cite{LdMRS},  the CSM cycles of matroids are defined as Minkowski weights on the fine fan structure of Bergman fans. Moreover, the weights of the cones are  described in terms of invariants of the underlying matroid in the following way.
Let $\mathcal{F} = \{ \emptyset \subsetneq F_1 \subsetneq \dots \subsetneq F_k \subsetneq E\}$ be a flag of flats of a rank $d+1$ matroid $M$, then the $k$-th CSM cycle of $M$  is the weight function  $w_{\csm_k(M)} : \Sigma^k \to \Z$ given by 
$$w_{\csm_k(M)}(\sigma_{\mathcal{F}}) = (-1)^{d-k} \prod_{i = 1}^{k} \beta ( M|F_{i}/F_{i-1}),$$
where $\beta$ denotes the beta invariant of $M$, namely
$\beta(M) = (-1)^d \tilde{\chi}_M(1)$.
The above formula determines
the underlying matroid $M$ and a priori the weight of a face may be different in the presence of matroid fan isomorphisms.  Moreover, the above definition of the CSM cycle is only a Minkowski weight for the fine fan structure. 

The next lemma provides an alternative formula for the  weights of the CSM cycles, and shows they can be recovered from the support of the fan. Hence this recipe produces CSM cycles as Minkowski weights on $B(M)$ with any fan structure. 
This recipe also allows us to consider the CSM cycles of matroids as fan tropical cycles. Tropical cycles are equivalence classes of Minkowski weights \cite{AllermanRau}. Two Minkowski weights 
$w_1: \Sigma_1^k \to \Z$ and $w_2: \Sigma_2^k \to \Z$ are equivalent and hence represent the same tropical cycle if there exists a fan $\Sigma_3$ which refines both $\Sigma_1$ and $\Sigma_2$ and a Minkowski weight $w_3 : \Sigma_3 \to \Z$
such that $w_1(\sigma_1) = w_2(\sigma_2) = w_3 (\sigma_3)$ if both $\sigma_1$ and $\sigma_2$ lie in a common face $\sigma_3$ of $\Sigma_3$.

To alternatively define the CSM cycles, first set
$$\tilde{\chi}_{\sigma}(t) = \sum_{i = 0}^d (-1)^{d-k} \dim \mathcal{F}_p(\Sigma_M(\sigma))$$ 
where 
$ \mathcal{F}_p(\Sigma_M(\sigma)) = \sum_{\sigma' \supset \sigma} \bigwedge^p \langle \sigma' \rangle$. 
This differs from the definition of the vector space
$ \mathcal{F}_p(\Sigma_M)$ in that it only takes into account faces containing $\sigma$. Notice again that the vector spaces  $\mathcal{F}_p(\Sigma_M(\sigma))$ are not dependent on the fan structure chosen and can be defined using any point $x \in \relint(\sigma)$. 

Now let  $\Sigma_M$ be an arbitrary fan structure on $B(M)$ for some loopfree matroid $M$. Then set
\begin{equation} \label{eqn:weightCSM}
w_{\csm_k(\Sigma_M)}(\sigma) = \frac{\tilde{\chi}_{\sigma}(t)}{(1-t)^k }|_{t=1}.
\end{equation}

\begin{lemma}
Let $M$ be a loopfree matroid and $B_f(M)$ be the Bergman fan with fine fan structure. Then
$$w_{\csm_k(M)} (\sigma_{\F})= w_{\csm_k(B_f(M) )}(\sigma_{\F})$$
for all flags of flats $\F$ of $M$. 

Moreover, for another fan structure $\Sigma_M'$ on  $B(M)$ the Minkowski weights $w_{\csm_k(\Sigma_M)} $ and $w_{\csm_k(\Sigma_M')} $
define the same tropical cycles. 
\end{lemma}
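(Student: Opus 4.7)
The plan has two parts.

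\medskip

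\noindent\emph{First assertion (agreement with the LdMRS formula).} I would compute $\tilde{\chi}_{\sigma_{\mathcal{F}}}(t)$ for the fine fan structure by identifying the relevant vector space with that of a direct-sum matroid. Since $\mathcal{F}_p(\Sigma_M(\sigma))$ depends only on the cones containing $\sigma$, or equivalently on the local support near a point of $\relint(\sigma)$, the star formula (\ref{eqn:stardescription}) gives
\[
\bigl|\Star_{v_{\mathcal{F}}}(B_f(M))\bigr| \;=\; |B(N_{\mathcal{F}})|, \qquad N_{\mathcal{F}} \;:=\; \bigoplus_{i=1}^{k+1} M|F_i/F_{i-1},
\]
with $F_0 = \emptyset$ and $F_{k+1} = E$. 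Viewing $B(N_{\mathcal{F}})$ inside $\mathbb{R}^{E}/\mathbb{R}\mathbbm{1}$ therefore yields $\mathcal{F}_p(\Sigma_M(\sigma_{\mathcal{F}})) = \mathcal{F}_p(B(N_{\mathcal{F}}))$, and Proposition~\ref{prop:OScoeff} applied to the loopfree matroid $N_{\mathcal{F}}$ gives $\tilde{\chi}_{\sigma_{\mathcal{F}}}(t) = \tilde{\chi}_{N_{\mathcal{F}}}(t)$. Multiplicativity of the characteristic polynomial under direct sums,
\[
\chi_{N_{\mathcal{F}}}(t) \;=\; \prod_{i=1}^{k+1}\chi_{M|F_i/F_{i-1}}(t) \;=\; (t-1)^{k+1}\prod_{i=1}^{k+1}\tilde{\chi}_{M|F_i/F_{i-1}}(t),
\]
combined with $\tilde{\chi}_{N_{\mathcal{F}}}(t) = \chi_{N_{\mathcal{F}}}(t)/(t-1)$, gives $\tilde{\chi}_{\sigma_{\mathcal{F}}}(t) = (t-1)^k\prod_i \tilde{\chi}_{M|F_i/F_{i-1}}(t)$. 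Substituting into Formula~(\ref{eqn:weightCSM}), the factor $(t-1)^k$ cancels $(1-t)^k$ up to $(-1)^k$, and translating each $\tilde{\chi}_{M|F_i/F_{i-1}}(1)$ into a beta invariant via $\beta(M') = (-1)^{r(M')-1}\tilde{\chi}_{M'}(1)$ reproduces the LdMRS formula $(-1)^{d-k}\prod_i \beta(M|F_i/F_{i-1})$ on the nose.

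\medskip

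\noindent\emph{Second assertion (equivalence of Minkowski weights across fan structures).} Here I would rely on the intrinsic nature of $\mathcal{F}_p(\Sigma_M(\sigma))$ already highlighted in the excerpt: it depends only on the support $|B(M)|$ and on a point in $\relint(\sigma)$. Given two fan structures $\Sigma_M$ and $\Sigma_M'$ on $B(M)$, choose a common refinement $\Sigma_M''$ of $\Sigma_M$ and $\Sigma_M'$. For each top-dimensional cone $\sigma'' \in (\Sigma_M'')^{(k)}$ lying inside a $k$-dimensional cone $\sigma$ of $\Sigma_M$, any $x \in \relint(\sigma'')$ also lies in $\relint(\sigma)$, so $\langle\sigma''\rangle = \langle\sigma\rangle$ and the families of linear spans entering the definitions of $\tilde{\chi}_{\sigma''}(t)$ and $\tilde{\chi}_\sigma(t)$ coincide. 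Consequently $\tilde{\chi}_{\sigma''}(t) = \tilde{\chi}_\sigma(t)$, and since both weight formulas use the same $k$, Formula~(\ref{eqn:weightCSM}) yields $w_{\csm_k(\Sigma_M'')}(\sigma'') = w_{\csm_k(\Sigma_M)}(\sigma)$. The symmetric argument for $\Sigma_M'$ supplies the Minkowski-weight compatibility data required for $w_{\csm_k(\Sigma_M)}$ and $w_{\csm_k(\Sigma_M')}$ to define the same tropical cycle.

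\medskip

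\noindent\emph{Main obstacle.} The crux is the identification $\mathcal{F}_p(\Sigma_M(\sigma_{\mathcal{F}})) = \mathcal{F}_p(B(N_{\mathcal{F}}))$ in Part~1: although the supports agree by~(\ref{eqn:stardescription}), the two sides are sums of wedge-power subspaces indexed by different combinatorial objects -- flags of flats of $M$ refining $\mathcal{F}$ on one side, flags of flats of the product matroid $N_{\mathcal{F}}$ on the other. Reconciling them requires unwinding the correspondence between refinements of $\mathcal{F}$ and product flags of $N_{\mathcal{F}}$ and checking that the resulting collections of linear spans generate the same subspace of $\bigwedge^p(\mathbb{R}^E/\mathbb{R}\mathbbm{1})$. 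The sign bookkeeping tying the factor $(-1)^{d-k}$ in the LdMRS expression to the $(1-t)^k$ normalization in Formula~(\ref{eqn:weightCSM}) will also demand some care, but once these are handled the remaining steps are formal.
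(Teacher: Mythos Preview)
Your proposal is correct and follows essentially the same route as the paper's proof, only with considerably more detail. The paper's argument for the first assertion invokes exactly the three facts you unwind---$\beta(M)=(-1)^{r(M)-1}\tilde{\chi}_M(1)$, multiplicativity $\chi_{M_1\oplus M_2}=\chi_{M_1}\chi_{M_2}$, and $\chi_M=(t-1)\tilde{\chi}_M$---and for the second assertion it also passes to a common refinement and appeals to the support-only dependence of the weight recipe. The ``main obstacle'' you flag, namely the identification $\mathcal{F}_p(\Sigma_M(\sigma_{\mathcal{F}}))\cong\mathcal{F}_p(B(N_{\mathcal{F}}))$, is handled in the paper simply by the sentence preceding the lemma, which asserts that $\mathcal{F}_p(\Sigma_M(\sigma))$ depends only on the local support and on a point of $\relint(\sigma)$; since the star formula identifies the supports, the dimensions agree without needing to match the two combinatorial indexings cone-by-cone.
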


\begin{proof}
The first statement follows from the fact that the beta invariant of a matroid is equal to $\beta(M) = (-1)^d \tilde{\chi}_M(1)$ and 
$\chi_{M_1}(t)\chi_{M_2}(t) = \chi_{M_1 \oplus M_2}(t)$, where $\chi_M(t)$ denotes the non-reduced characteristic polynomial, 
that is $(1-t) \tilde{\chi}_M(t) = \chi_M(t)$. 

For the second statement, if $\Sigma_M$ and $\Sigma_M'$ are two fan structures on $B(M)$ then there exists a common refinement $\Sigma''_M$. Since the recipe for the weight $w_{\csm_k(\Sigma)}(\sigma) $ depends only on the support of $\Sigma$ locally about $\sigma$ the CSM Minkowski weights of all three fans are equivalent. 
\end{proof}

\begin{proposition}\label{prop:samecsm}
If $\phi : B_\ast(M_1)  \to B_\ast(M_2)$ is an isomorphism of fans where $\ast$ denotes arbitrary fan structures, then $\phi$ sends $\csm_k(M_1)$ to $ \csm_k(M_2)$ for all $k$.  \end{proposition}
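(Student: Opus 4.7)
The plan is to leverage the alternative description of the $\csm_k$ weights furnished by the preceding lemma. Formula~(\ref{eqn:weightCSM}) expresses $w_{\csm_k(\Sigma_M)}(\sigma)$ purely through the dimensions $\dim \mathcal{F}_p(\Sigma_M(\sigma))$, and those dimensions in turn are defined solely from the cones of $\Sigma_M$ containing $\sigma$ and their linear spans. In particular, there is no intrinsic dependence on the matroid or on the fan structure beyond what is visible in the support of $\Sigma_M$ near $\sigma$. Hence any fan isomorphism ought to transport the weights automatically.

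To carry this out, I would first fix a $k$-dimensional cone $\sigma$ of $B_\ast(M_1)$ and set $\tau=\phi(\sigma)\in B_\ast(M_2)$. Since $\phi$ is a linear isomorphism sending cones bijectively to cones, the assignment $\sigma'\mapsto\phi(\sigma')$ is a bijection between the cones of $B_\ast(M_1)$ containing $\sigma$ and those of $B_\ast(M_2)$ containing $\tau$, and it identifies the linear spans $\langle\sigma'\rangle$ with $\langle\phi(\sigma')\rangle$. Passing to $p$-th exterior powers, the induced isomorphism on $\bigwedge^{p}(\R^{E_1}/\R\mathbbm{1})$ restricts to an isomorphism
\[
\mathcal{F}_p(B_\ast(M_1)(\sigma)) \ \stackrel{\sim}{\longrightarrow}\ \mathcal{F}_p(B_\ast(M_2)(\tau))
\]
for every $p$, as in the proof of Proposition~\ref{prop:samecharpoly}. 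In particular the dimensions agree, so $\tilde{\chi}_\sigma(t)=\tilde{\chi}_\tau(t)$, and substituting into~(\ref{eqn:weightCSM}) yields
\[
w_{\csm_k(B_\ast(M_1))}(\sigma) \;=\; w_{\csm_k(B_\ast(M_2))}(\tau).
\]

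It then remains to interpret this equality of Minkowski weights as the desired statement $\phi_\ast \csm_k(M_1) = \csm_k(M_2)$ of tropical cycles. This follows from the second assertion of the preceding lemma, which ensures that $\csm_k(M_i)$ is represented by the Minkowski weight in~(\ref{eqn:weightCSM}) on whatever fan structure $B_\ast(M_i)$ has been chosen, so that the Minkowski weight one obtains after pushing forward along $\phi$ is a legitimate representative of the CSM tropical cycle on the target side. I do not foresee a substantive obstacle: the preceding lemma has already done the conceptually heavy work by recasting the weights in a coordinate-free form depending only on the support, and what remains is to transport local dimension data along $\phi$, which is a direct consequence of linearity together with the bijection of cones.
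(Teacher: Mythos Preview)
Your proposal is correct and follows essentially the same route as the paper: both arguments reduce to showing $\dim \mathcal{F}_p(B_\ast(M_1)(\sigma)) = \dim \mathcal{F}_p(B_\ast(M_2)(\phi(\sigma)))$ by the method of Proposition~\ref{prop:samecharpoly}, and then invoke formula~(\ref{eqn:weightCSM}). Your write-up is somewhat more explicit about the bijection of cones containing $\sigma$ and about the passage to tropical cycles via the preceding lemma, but the substance is identical.
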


\begin{proof}
To shorten notation let $\Sigma_i = B_\ast(M_i)$.  
Since $\phi$ is an isomorphism of fans it sends cones of dimension $k$ of $\Sigma_1$ to cones of dimension $k$ to $\Sigma_2$. It suffices to prove that the weight of a face $\sigma \in \Sigma_1^k$  in $\csm_k(M_1)$ is the same as the weight of $\phi(\sigma) \in \Sigma^k_2$  in $\csm_k(M_2)$. 
By the same proof as for Proposition \ref{prop:samecharpoly}, for all $p$ we have $\dim \F_p(\Sigma_1(\sigma)) = \dim \F_p(\Sigma_2(\phi(\sigma)))$. The statement now follows from the formula for the weights of faces in CSM classes in Equation \ref{eqn:weightCSM}.
\end{proof}

\section{Isomorphisms preserving the fine structure}
The goal of this section is to prove Theorem \ref{thm:fineMatroidAuto} which states that every isomorphism between Bergman fans preserving the fine structure is induced by a matroid isomorphism if the matroids are not totally disconnected.

\begin{lemma}\label{lem:equivtotallydisconnected}
If  $M$ is a loopfree  matroid, then the  following are equivalent:
\begin{enumerate}
\item  $M$ is totally disconnected.
\item $M$ does not contain a circuit of size  greater than or equal to $3$.
\item $M / F$ is totally disconnected for every rank $1$ flat $F$.
\item $B(M)$ is a linear space of dimension $r(M) -1$.
\item the unsigned constant term of  the (reduced) characteristic  polynomial $\mu_d(M)$ is  $1$. 
\end{enumerate} 
\end{lemma}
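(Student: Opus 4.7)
I would establish the equivalences in the natural groupings (1)$\Leftrightarrow$(2), (1)$\Leftrightarrow$(3), (1)$\Leftrightarrow$(4), and (4)$\Leftrightarrow$(5). The main algebraic tool for the last two is Proposition \ref{prop:OScoeff}, identifying $\mu^p(M)$ with $\dim \mathcal{F}_p(B(M))$.

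For (1)$\Leftrightarrow$(2) I would use the fact that a loopfree totally disconnected matroid is precisely a direct sum $\bigoplus_i U_{1,n_i}$ of rank one matroids: circuits of such a sum live in a single summand and consist only of parallel pairs, so they have size $\leq 2$, giving (2). For the converse, a connected component of rank $\geq 2$ contains two non-parallel elements, which by the circuit-characterization of connectedness lie in a common circuit; non-parallelism forces this circuit to have size $\geq 3$.

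For (1)$\Rightarrow$(3) the rank one flats of $\bigoplus_i U_{1,n_i}$ are exactly the summands, and contracting one leaves the direct sum of the remaining summands, still totally disconnected. The reverse implication (3)$\Rightarrow$(1) I would prove by induction on $r(M)$, applying the already-established (1)$\Leftrightarrow$(2) to the smaller-rank matroid $M/F$ and tracing a hypothetical size $\geq 3$ circuit $C$ of $M$ through a contraction by $F = \cl(\{i\})$ for $i \in C$. Since no two distinct elements of a circuit of size $\geq 3$ are parallel, $C \setminus F = C \setminus \{i\}$ has size $\geq 2$, is dependent in $M/F$, and contains a circuit of $M/F$, which by induction together with (1)$\Leftrightarrow$(2) leads to the desired contradiction.

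For (1)$\Leftrightarrow$(4): if $M = \bigoplus_i U_{1,n_i}$ has $k$ summands, its simplification $\widehat M$ is the Boolean matroid $U_{k,k}$, whose Bergman fan equals the full ambient space $\R^k/\R\mathbbm{1}$; under the identification $B(M) = B(\widehat M)$ this makes $B(M)$ a linear space of dimension $r(M)-1$. Conversely, if $B(M)$ is a linear space of dimension $r(M) - 1$, then $\widehat M$ has $|\widehat E| = r(\widehat M) = r(M)$, forcing $\widehat M = U_{r(M), r(M)}$, so $M$ is a direct sum of rank one matroids.

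For (4)$\Leftrightarrow$(5): Proposition \ref{prop:OScoeff} gives $\mu^d(M) = \dim \mathcal{F}_d(B(M))$ for $d = r(M)-1$. If $B(M)$ is a linear space $V$ of dimension $d$, then every $\langle \sigma \rangle$ lies in $V$, so $\mathcal{F}_d(B(M)) = \bigwedge^d V$ is one-dimensional. Conversely, $\mu^d = 1$ forces all top-dimensional cones of any fan structure on $B(M)$ to share a common $d$-dimensional linear span $V$, so $B(M) \subseteq V$, and then the balancing of $B(M)$ at every codimension one face rules out any boundary inside $V$, forcing $B(M) = V$.

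The main obstacle is the last step of (5)$\Rightarrow$(4): upgrading the inclusion $B(M) \subseteq V$ to equality via the balancing condition. I would appeal to the standard fact that a balanced, pure $d$-dimensional fan in $\R^d$ with positive integer weights cannot possess any codimension one face adjacent to only one top-dimensional cone, so its support must exhaust the ambient $d$-plane.
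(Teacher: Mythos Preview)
Your plan mirrors the paper's closely: both argue $(1)\Leftrightarrow(2)$ via circuits in the connected components, handle $(1)\Rightarrow(3)$ directly, and invoke Proposition~\ref{prop:OScoeff} for $(4)\Leftrightarrow(5)$. The differences are minor. For $(1)\Leftrightarrow(4)$ you pass to the simplification and identify it with the Boolean matroid $U_{r,r}$, while the paper instead quotes \cite[Lemma~2.3]{FrancoisRau} that the lineality space of $B(M)$ has dimension one less than the number of connected components of $M$; for $(5)\Rightarrow(4)$ you supply a balancing argument to upgrade $B(M)\subseteq V$ to equality, which the paper leaves implicit in its citation of Proposition~\ref{prop:OScoeff}. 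Both alternatives are fine.

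There is, however, a genuine gap in your step $(3)\Rightarrow(1)$, and it is worth knowing that the paper's own argument for $(3)\Rightarrow(2)$ has the very same gap. You take a hypothetical circuit $C$ with $|C|\geq 3$, pick $i\in C$, set $F=\cl\{i\}$, and note that $C\setminus\{i\}$ is dependent in $M/F$ and hence contains a circuit there. But when $|C|=3$ that circuit in $M/F$ has size $2$, which is perfectly compatible with $M/F$ being totally disconnected; no contradiction arises. (The paper's version asserts $r_{M/F_k}(\{i_1,\ldots,i_{k-1}\})=k-1$, which likewise fails for $k=3$.) In fact $(3)\Rightarrow(1)$ is \emph{false} in rank $2$: for $M=U_{2,3}$ each $M/\{i\}\cong U_{1,2}$ is totally disconnected, yet $M$ is connected of rank $2$. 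The equivalence does hold once $r(M)\geq 3$, which is the only regime in which the paper applies the lemma; under that hypothesis, for a size-$3$ circuit $C$ one can instead contract a rank-$1$ flat $F$ disjoint from $\cl(C)$ (such $F$ exists since $r(\cl(C))=2<r(M)$): the three elements of $C$ then lie in distinct rank-$1$ flats of the totally disconnected matroid $M/F$, forcing $r_{M/F}(C)=3$, whereas $r_{M/F}(C)=r_M(C\cup F)-1=2$, the desired contradiction.
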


\begin{proof}
We first show the equivalence of statements $1)$ and $2)$.
The circuits of a  matroid which is totally disconnected are all of size less than or equal to $2$. 
Conversely, if $M$ 
is not totally disconnected then there must be a connected component of $M$ 
which contains a circuit of size at least $3$. This circuit is also a circuit of $M$. 

Statement $1)$ implies $3)$ directly. Suppose statement $3)$ holds and let $C$ be a circuit of $M$. If $C$ is contained in a rank $1$ flat of $M$, then $|C| =2$. Therefore,  suppose $C = \{ i_1, \dots, i_k\}$ with $i_j \in F_j$ where $F_j$ are distinct rank $1$ flats of $M$ and $k\geq 3$.
Then 
$r_{M / F_k}(\{i_1, \dots, i_{k-1}\})  = k-1 $ since $M /F_k$ is totally disconnected and $F_k$ is of rank $1$. This implies  that 
$r(C) = r_{M / F_k}(\{i_1, \dots i_{k-1}\}) + r(F_k) = k$ contradicting the fact that $C$ is a circuit. 
Therefore, all circuits of $M$ have size less than or equal to $2$ so $3)$ implies $2)$.

The equivalence of $1)$ and $4)$ follows from the fact that the  dimension of the lineality space of a Bergman fan is  one less than the number of connected components of the corresponding matroid, Proposition \cite[Lemma 2.3]{FrancoisRau}.
The equivalence of $4)$ and $5)$ follows from Proposition \ref{prop:OScoeff}.  
 This completes the proof. 
\end{proof}

\begin{lemma} \label{lem:localtotallydisconnected}
Let $M_1$ and $M_2$ be simple matroids, and let $i$ be in $E(M_1)$.
Suppose $\phi: B_f(M_1) \to B_f(M_2)$ is a isomorphism
mapping the ray associated to the flat $\{i\}$ to a ray of a corank one flat in $M_2$. Then $M_1 / \{i\}$ is totally disconnected. 
 \end{lemma}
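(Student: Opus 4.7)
The plan is to compare a single top-degree intersection number on the two sides of the isomorphism. By Proposition \ref{cor:chowdeg}, $\phi$ induces a degree-compatible isomorphism of Chow rings $\phi_\ast\colon A^*(M_1) \to A^*(M_2)$. Since in the fine fan structure the rays are exactly the proper nonempty flats (both matroids are simple) and $\phi(v_{\{i\}}) = v_G$, the induced map sends the generator $x_{\{i\}}$ to the generator $x_G$. Writing $r = r(M_1) = r(M_2)$ (these agree because the two fine fans both have dimension $r-1$), compatibility with the degree map yields
\[
\deg_{M_1}\!\bigl(x_{\{i\}}^{\,r-1}\bigr) \;=\; \deg_{M_2}\!\bigl(x_G^{\,r-1}\bigr).
\]

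The key step is then to evaluate both sides explicitly via Eur's formula (Theorem \ref{formula:Eur}), applied to a single-flat flag. On the $M_1$ side, with $F_1 = \{i\}$ of rank $1$ and $d_1 = r-1$, the formula gives $\deg(x_{\{i\}}^{\,r-1}) = (-1)^{r-2}\binom{r-2}{r-2}\mu^{r-2}(M_1/\{i\}) = (-1)^{r-2}\mu^{r-2}(M_1/\{i\})$. On the $M_2$ side, with $F_1 = G$ of rank $r-1$ and $d_1 = r-1$, it gives $\deg(x_G^{\,r-1}) = (-1)^{r-2}\binom{r-2}{0}\mu^{0}(M_2/G) = (-1)^{r-2}$, since $\mu^{0}=1$ for any loopfree matroid. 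Equating the two expressions forces $\mu^{r-2}(M_1/\{i\}) = 1$. Because $M_1/\{i\}$ has rank $r-1$, this is precisely condition (5) in Lemma \ref{lem:equivtotallydisconnected}, and the equivalence with (1) delivers the conclusion that $M_1/\{i\}$ is totally disconnected.

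The low-rank edge cases are immediate: if $r \leq 2$ then $M_1/\{i\}$ has rank $\leq 1$ and is trivially totally disconnected, so one may assume $r \geq 3$ when invoking Eur's formula. I do not foresee a substantive obstacle; the argument is essentially a numerical verification, and the main conceptual content is recognizing that $x_{\{i\}}^{\,r-1}$ is the correct monomial whose degree, computed on both sides, isolates the leading coefficient $\mu^{r-2}(M_1/\{i\})$ of the characteristic polynomial of $M_1/\{i\}$.
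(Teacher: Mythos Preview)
Your proof is correct and follows essentially the same approach as the paper: both compute $\deg(x_{\{i\}}^{r-1})$ and $\deg(x_G^{r-1})$ via Eur's formula, equate them using the degree-compatible Chow ring isomorphism from Proposition~\ref{cor:chowdeg}, and then invoke part~(5) of Lemma~\ref{lem:equivtotallydisconnected}. The only differences are cosmetic (you write the rank as $r$ rather than $d+1$) and your explicit treatment of the low-rank edge cases, which the paper omits.
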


\begin{proof} Let $d+1$ be the rank of $M_1$ and $M_2$.
By  Formula \ref{formula:Eur}, we can compute
$$\deg x_i^d = (-1)^{d-1}\mu^{d-1} (M_1/  \{i\}) \text{ and } \deg x_F^d = (-1)^{d-1}\mu^0(M_2 /F) = (-1)^{d-1},$$
where $\rank F = d$. By Proposition \ref{cor:chowdeg} the isomorphism $\phi$ induces an isomorphism of Chow rings which is compatible with the degree map. Therefore, we must have $\mu^{d-1} (M_1/  \{i\})  = 1$.
It follows from Lemma \ref{lem:equivtotallydisconnected} part 5), that the matroid $M_1/  \{i\}$ is totally disconnected.
\end{proof}

\begin{thm}\label{thm:fineMatroidAuto} Let $M_1$ and $M_2$ be simple matroids such that  neither is totally disconnected. If  $\phi: B_f(M_1) \to B_f(M_2)$
is an isomorphism of fans,  then $\phi$ is induced by a matroid isomorphism. 
\end{thm}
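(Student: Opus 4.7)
The plan is to extract a matroid isomorphism $f : E(M_1) \to E(M_2)$ from $\phi$, working via the bijection of rays induced on the fine fans. Since both matroids are simple, the rays of $B_f(M_i)$ are in bijection with proper flats of $M_i$ and cones correspond to chains of flats; hence $\phi$ induces a bijection $\phi_\star$ on proper flats that preserves the comparability relation. The whole game is to show that $\phi_\star$ sends rank-$1$ flats to rank-$1$ flats; once this is done, $\Z$-linearity of $\phi$ will force $f$ to be a matroid isomorphism.

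The key input is Proposition \ref{cor:chowdeg}, which gives an isomorphism of Chow rings compatible with the degree map. Suppose $\phi_\star(\{i\}) = F$ where $F$ has rank $r \ge 2$ in $M_2$. Pick any rank-$2$ flat $G \supsetneq \{i\}$ of $M_1$ and set $G' = \phi_\star(G)$; by comparability preservation, $F$ and $G'$ are comparable. Eur's formula (Theorem \ref{formula:Eur}) applied to the chain $\{i\} \subsetneq G$ with exponents $(1,d-1)$ yields the nonzero value $(-1)^{d-2}\mu^{d-2}(M_1/G)$. On the image side, Eur's formula contains a factor $\binom{0}{1-r}$ when $F \subsetneq G'$ and a factor $\binom{0}{d-r}$ when $G' \subsetneq F$; matching the nonzero $M_1$-side degree forces the second case with $r=d$. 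Lemma \ref{lem:localtotallydisconnected} then implies $M_1/\{i\}$ is totally disconnected. To exclude corank-$1$ images altogether, use the hypothesis that $M_1$ is not totally disconnected: by Lemma \ref{lem:equivtotallydisconnected}(3) there is some rank-$1$ flat $\{i_*\}$ with $M_1/\{i_*\}$ not totally disconnected, so the argument above forces $\phi_\star(\{i_*\}) = \{j_*\}$ to be rank-$1$. If another rank-$1$ flat $\{i\}$ mapped to a corank-$1$ flat $F$, then $j_* \notin F$ (since $\{j_*\}$ and $F$ are incomparable); the rank-$2$ flat $H = \mathrm{cl}(\{i_*,i\})$ would have image $H' = \phi_\star(H)$ comparable to both $\{j_*\}$ and $F$, and a short case analysis on the position of $H'$ shows every possibility either forces $j_* \in F$ or makes $H'$ improper, a contradiction. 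Hence every rank-$1$ flat of $M_1$ maps to a rank-$1$ flat of $M_2$.

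With rank preservation on rank-$1$ flats in hand, we obtain a bijection $f : E(M_1) \to E(M_2)$ by $\phi_\star(\{i\}) = \{f(i)\}$. Then $\phi(\bar v_i) = \bar v_{f(i)}$ (both primitive on the same ray), and $\Z$-linearity extends this to $\phi(\bar v_F) = \bar v_{f(F)}$ for every subset $F \subseteq E(M_1)$. For each proper flat $F$ of $M_1$, the image must be the primitive vector on a ray of $B_f(M_2)$, forcing $f(F)$ to be a proper flat of $M_2$ (the alternative of complementation is ruled out by the positive direction of the ray). Thus $f$ is an isomorphism of the geometric lattices of flats, hence a matroid isomorphism inducing $\phi$. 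The main technical hurdle is the chain-degree analysis in the middle step, which cleanly eliminates the intermediate rank range $2 \le r \le d-1$ and reduces the problem to the corank-$1$ case; the subsequent exclusion of corank-$1$ images via $H = \mathrm{cl}(\{i_*,i\})$ and the reconstruction of $f$ from linearity are then relatively routine.
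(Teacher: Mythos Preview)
Your proof is correct, and it takes a genuinely different route from the paper's argument.

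The paper proceeds \emph{top-down}: it first shows that corank-$1$ flats of $M_1$ map to corank-$1$ flats of $M_2$ by computing $\deg(x_F^d)$ for a single flat $F$ and analyzing the resulting binomial constraint $\binom{d-1}{d-k}\mu^{d-k}(M_2/\phi(F))=1$. If this fails, the paper shows by a propagation argument through \emph{all} corank-$1$ flats that $M_2/\{i\}$ is totally disconnected for every $i$, contradicting the hypothesis. It then runs a downward induction on rank, passing to the truncation matroids $\tr_{k+1}(M_1)$ and $\tr_{k+1}(M_2)$ to show rank-$k$ flats map to rank-$k$ flats for each $k$.

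Your approach is \emph{bottom-up} and avoids the truncation induction entirely. You compute a two-term chain degree $\deg(x_i\,x_G^{d-1})$ instead of a single-flat power; the vanishing of $\binom{0}{1-r}$ and $\binom{0}{d-r}$ immediately rules out the intermediate rank range $2\le r\le d-1$, so only the corank-$1$ obstruction remains. Your exclusion of that case is local rather than global: a single rank-$2$ flat $H=\cl(\{i_*,i\})$ and the incomparability of $\{j_*\}$ and $F$ yield a clean contradiction. Once rank-$1$ flats go to rank-$1$ flats, you read off the bijection $f$ from $\phi(\bar v_i)=\bar v_{f(i)}$ and let $\Z$-linearity carry the rest; the paper instead establishes rank preservation at every level before concluding.

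Two small points worth making explicit in a write-up: (i) the base case $d=1$ (rank $2$) should be disposed of separately, since your chain argument needs a proper rank-$2$ flat $G$; and (ii) the bijectivity of $f$ uses $|E_1|=|E_2|$, which follows from $\phi$ being a lattice isomorphism $\Z^{E_1}/\Z\mathbbm{1}\to\Z^{E_2}/\Z\mathbbm{1}$. Neither is a real gap. Overall your argument is shorter and more direct; the paper's truncation method is more systematic and yields rank preservation at every level as a byproduct, which is not needed for the bare statement but is informative in its own right.
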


\begin{proof}
Note that if there exists an isomorphism of their Bergman fans, then  $M_1$ and $M_2$ must have the same rank, which we denote by $d+1$. If $M_1$ and $M_2$ are of rank $2$, then the simplicity assumption implies that both are isomorphic to $U_{2, n}$ for some $n$. Hence we may assume that $d \geq 2$. 

Since $\phi$ maps rays to rays, we get a bijective correspondence $\phi$ between the flats of $M_1$ and the flats of $M_2$ preserving adjacency,
i.e.~if $F_1 \subsetneq F_2$ is a flag of flats in $M_1$, we find that either $\phi(F_1) \subsetneq \phi(F_2)$ or $\phi(F_2) \subsetneq \phi(F_1)$.

We begin by  showing that  flats of rank $d$ of $M_1$ must be mapped to flats of rank $d$ of $M_2$. Let $F$ be a flat of rank $d$ in $M_1$. 
As in the proof of Lemma \ref{lem:localtotallydisconnected},
we deduce from Theorem \ref{formula:Eur} that we have 
$\deg(x_{F} ^d) = (-1)^{d-1}$
 in the Chow ring of $M_1$. 
 If $F'$ is a flat of rank $k$ in $M_2$,  find again by Theorem \ref{formula:Eur} that
\[\deg(x_{F'} ^d) = (-1)^{d-1}\binom{d-1}{d-k} \mu^{d-k}(M_2/ F').\] 
By Proposition \ref{cor:chowdeg}, the isomorphism $\phi$ induces an isomorphism of Chow rings compatible with the degree map. 
If $\phi(F)$ is a flat of rank $k$, we deduce that $\deg(x_{\phi(F)} ^d) = (-1)^{d-1}$, which implies $\binom{d-1}{d-k} \mu^{d-k}(M_2/ \phi(F)) = 1$. In particular,  $k$ must be equal to  $d$ or to $1$. 

Assume that $k = 1$, so that $\phi(F) = \{i\}$ for some $i \in E(M_2)$. In this case we find
$\mu^{d-1}(M_2/ \{i\}) =1$, which implies by Lemma \ref{lem:localtotallydisconnected} that the matroid $M_2/\{i\}$ is totally disconnected.

Consider any $j \in F$, then by Theorem \ref{formula:Eur} we have 
$\deg(x_jx_F^{d-1} ) = (-1)^{d-2}$. Suppose that $j$ is sent to the flat $G$ of $M_2$. 
Then we must have $\deg( x_{i}^{d-1}x_G ) = (-1)^{d-2}$. However, the
degree of the product $ x_{i}^{d-1}x_G$ contains 
the factor $\mu^{d-2}(M_2|G/i)$. The rank of the matroid $M_2|G/i $ is $\rank (G) -1$, since $i$ is not a loop. So if $\rank(G) < d$, then  
$\mu^{d-2}(M_2|G/i) = 0$, hence the degree of $ x_{i}^{d-1}x_G$ would be zero, see also \cite[Proposition 3.12]{eur}.
Hence every $j \in F$ must be sent to a flat of rank $d$ of $M_2$. 
Carrying on, we see that all rank $d$ flats of $M_1$ which contain any  $j \in F$ must be sent to rank $1$ flats of $M_2$.

 Now  take any rank $d$ flat $F''$ in $M_1$. We can always find a rank $d$ flat $F'$ in $M_1$ such that $F \cap F'$ and $F' \cap F''$ are non-empty, which implies that $\phi(F'')$ also has to have rank one.  Reasoning as above we find that every rank $d$ flat in $M_1$ is mapped to a rank one flat in $M_2$. Note that conversely every rank one flat in $M_2$ is in fact the image of a rank $d$ flat in $M_1$: Any rank one flat $G$ in $M_2$ is of the form $\phi(F)$ for some flat $F$ in $M_1$. If $F$ were contained in two different rank $d$ flats, then $G$ would be adjacent to two different lines, which is impossible. Therefore  we find by Lemma \ref{lem:localtotallydisconnected} applied to $\phi^{-1}$ that $M_2/\{i\}$ is totally disconnected for all $i \in E(M_2)$ and by Lemma \ref{lem:equivtotallydisconnected} that $M_2$ is totally disconnected which  contradicts our hypothesis. Therefore we conclude that all flats of rank $d$ in $M_1$ are sent to flats of rank $d$ in $M_2$.

Now we show by downward induction that for all $k = 2, \ldots, d$ the map $\phi$ maps flats of rank $k$ to flats of rank $k$. 
Assume that the claim is true for all $l > k$, where $k$ is between $2$ and $d-1$. 
Then $\phi$ induces a map between the set of flats of $M_1$ and $M_2$ of  rank less than or equal to $k$. In particular, it gives a map between the lattice of flats of the truncation matroids $\tr_{k+1}(M_1)$ and $\tr_{k+1}(M_2)$. Recall that the $k+1$-truncation of the matroid $M$ has rank function $$\tr_{k+1}r_M(I ) = \min\{ r(I), k+1\}.$$
Assume that $\phi$ maps a flat of rank $k$ to a flat of rank strictly less than $k$. We apply the argument above to the truncated matroids. Replacing $d$  by $k$ we find that
$\tr_{k+1} M_2 / \{i\} $ is totally disconnected for all $i \in E(M_2)$. So again by Lemma \ref{lem:equivtotallydisconnected}, the truncation  $\tr_{k+1} M_2 $ is totally disconnected and all circuits are of size less than or equal to $2$. 

We claim that this implies that $M_2$ is totally disconnected for $k \geq 2$.  
Suppose that $M_2$ has a circuit $C$ with $|C| \geq 3$. If 
$\tr_{k+1}r_{M_2}(C) = \min\{ r_{M_2}(C), k+1\} = r_{M_2}(C) = |C| -1$, then $C$ is still a circuit  in $\tr_{k+1} M_2$ which is a contradiction. Otherwise we have $|C| > k+2$. 
If $C' \subsetneq C$ and $|C'| = k+2 $, then $C' $ is a circuit of $\tr_{k+1} M_2 $ since $\min\{ r_{M_2}(C'), k+1\}  = |C'| - 1$ and every subset $I \subsetneq C'$ satisfies
$\min\{ r_{M_2}(I), k+1\}  = |I|$. However, $k+2 \geq 3$ which contradicts that $\tr_{k+1} M_2$ is totally disconnected. 
\end{proof}

 \begin{corollary}
Let $\mathcal{W}_1$ and $\mathcal{W}_2$ be the maximal wonderful compactifications of the complements $\Omega_{\mathcal{A}_1}$ and $ \Omega_{\mathcal{A}_2}$ of two essential and connected hyperplane arrangements $\mathcal{A}_1$, $\mathcal{A}_2$. 
Any isomorphism $\Omega_{\mathcal{A}_1} \to  \Omega_{\mathcal{A}_2}$ extending to the maximal wonderful compactifications gives rise to a map of Bergman fans which is induced by an isomorphism of matroids $M(\mathcal{A}_1) \to M(\mathcal{A}_2)$. 
\end{corollary}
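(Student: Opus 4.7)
The plan is to assemble the corollary from three ingredients already laid out in the paper: the extension results of \cite{kuwe} recalled at the start of Section \ref{sec:realizable}, the identification of the maximal wonderful compactification with the fine fan structure from \cite{conpro, feiyuz}, and Theorem \ref{thm:fineMatroidAuto}. So given an isomorphism $f\colon \Omega_{\mathcal{A}_1}\to\Omega_{\mathcal{A}_2}$ extending to the maximal wonderful compactifications, I first invoke (the proof of) \cite{kuwe}, Theorem 5.1 to see that $f$ extends to a morphism between the intrinsic tori which is a group homomorphism up to translation. Taking tropicalizations, this produces a $\mathbb{Z}$-linear map $\phi\colon \R^{E_1}/\R\mathbbm{1}\to \R^{E_2}/\R\mathbbm{1}$ which restricts to a bijection of supports $B(M(\mathcal{A}_1))\to B(M(\mathcal{A}_2))$; by applying the same construction to $f^{-1}$ we obtain an inverse, so $\phi$ is a linear isomorphism of supports.

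Next I have to upgrade this to an isomorphism of fine fan structures. The maximal wonderful compactification $\mathcal{W}_i$ is the closure of $\Omega_{\mathcal{A}_i}$ in the toric variety associated with the fine Bergman fan $B_f(M(\mathcal{A}_i))$ by \cite{conpro, feiyuz}. Because $f$ by hypothesis extends to an isomorphism $\mathcal{W}_1\to\mathcal{W}_2$, the induced map on intrinsic tori extends to a morphism of the associated toric varieties; equivalently, the tropicalization $\phi$ sends cones of $B_f(M(\mathcal{A}_1))$ to cones of $B_f(M(\mathcal{A}_2))$. The same argument applied to $f^{-1}$ shows the inverse is also a morphism of fine fan structures, so $\phi\colon B_f(M(\mathcal{A}_1))\to B_f(M(\mathcal{A}_2))$ is an isomorphism in the sense of Definition \ref{def:morphism}.

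Finally I would check the hypotheses of Theorem \ref{thm:fineMatroidAuto}. The matroids $M(\mathcal{A}_i)$ are simple because the arrangements consist of distinct hyperplanes, and they are not totally disconnected since the arrangements are connected: connectedness of the matroid means a single connected component, so if the matroid were also totally disconnected then all components would have rank $\le 1$, forcing the arrangement to be a single hyperplane, a case that can be dismissed at the outset (in rank one there is nothing to prove as any linear bijection of one-dimensional supports is already induced by the unique matroid isomorphism). With the hypotheses verified, Theorem \ref{thm:fineMatroidAuto} gives an isomorphism of matroids $M(\mathcal{A}_1)\to M(\mathcal{A}_2)$ inducing $\phi$, as desired.

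The only step requiring some care is the second one, namely deducing that $\phi$ preserves the fine fan structure rather than only the supports. This is where one must use the hypothesis that $f$ extends to the \emph{maximal} wonderful compactifications, and match this intrinsic algebraic-geometric extension property with the combinatorial statement that $\phi$ is a morphism of the fine fans; everything else is a direct appeal to results already established in the paper.
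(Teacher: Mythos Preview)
Your proposal is correct and follows the same three-step strategy as the paper: (1) invoke \cite{kuwe} to get a $\mathbb{Z}$-linear isomorphism of supports via tropicalization, (2) use the extension to the maximal wonderful compactifications to upgrade this to an isomorphism of fine fan structures, and (3) apply Theorem~\ref{thm:fineMatroidAuto}.

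The one place where your argument and the paper's differ is in how step~(2) is executed. You assert that the extension $\mathcal{W}_1\to\mathcal{W}_2$ forces the torus homomorphism to extend to the ambient toric varieties, hence that $\phi$ sends cones to cones. This equivalence is standard toric geometry, but the implication ``$f$ extends to $\mathcal{W}_i$'' $\Rightarrow$ ``the torus map extends to $X(B_f(M(\mathcal{A}_i)))$'' is not automatic, since $\mathcal{W}_i$ is only a closed subvariety of the toric variety. The paper handles this point more concretely: the boundary of $\mathcal{W}_i$ is a union of divisors indexed by the proper flats of $M(\mathcal{A}_i)$ (i.e.\ by the rays of $B_f$), and two such divisors meet if and only if the corresponding flats are nested (i.e.\ the rays span a cone). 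Thus the extension of $f$ to the boundaries directly yields a bijection on rays preserving adjacency, which is exactly the fine-fan isomorphism. Your toric-extension formulation can be made rigorous, but the natural way to do so is precisely this boundary-stratification argument, so in the end the two arguments coincide.
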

\begin{proof} 
As in the proof of  \cite [Theorem 5.1]{kuwe} we see that an isomorphism $f: \Omega_{\mathcal{A}_1} \to  \Omega_{\mathcal{A}_2}$ induces an isomorphism between the intrinsic tori and hence via tropicalization a $\mathbb{Z}$-linear  isomorphism mapping $B(M(\mathcal{A}_1))$ to $B(M(\mathcal{A}_2))$. 

If $f: \Omega_{\mathcal{A}_1} \to  \Omega_{\mathcal{A}_2}$ extends to the maximal wonderful compactifications, it induces an isomorphism between the boundaries. 
The boundary of the maximal wonderful compactification of $\Omega_{\mathcal{A}_i}$ is the union of divisors associated to the flats in the fine structure of the Bergman fan associated to $M(\mathcal{A}_i)$. Since two divisors meet if and only if the associated flats are nested, $f$ induces an isomorphism between the two Bergman fans with their fine structure. Hence our claim follows from Theorem \ref{thm:fineMatroidAuto}.
\end{proof}

\begin{cor} Every finite group $G$ occurs as the automorphism group of the Bergman fan $B_f(M)$ for some simple matroid $M$ of rank $3$. 
\end{cor}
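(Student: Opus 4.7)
The plan is to combine Theorem \ref{thm:fineMatroidAuto} with a realization result for matroid automorphism groups. Indeed, by that theorem, whenever $M$ is a simple rank $3$ matroid which is not totally disconnected, every automorphism of $B_f(M)$ is induced by a matroid automorphism, so the natural injection $\mathrm{Aut}(M) \hookrightarrow \mathrm{Aut}(B_f(M))$ is an isomorphism. The corollary therefore reduces to producing, for each finite group $G$, a simple rank $3$ matroid $M$ with at least one 3-element circuit whose matroid automorphism group is $G$.

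To produce such an $M$, I would start from Frucht's theorem, which yields a finite simple graph $\Gamma$ with $\mathrm{Aut}(\Gamma) \cong G$. After replacing $\Gamma$ by an equivalent rigid thickening if necessary (for instance, by attaching the same asymmetric gadget to each vertex), one may assume that $\Gamma$ has minimum degree at least $2$ and at least one edge, while $\mathrm{Aut}(\Gamma)$ is unchanged. I would then define a matroid $M = M_\Gamma$ on the ground set $V(\Gamma) \sqcup E(\Gamma)$ by declaring the rank-$2$ flats of size at least $3$ to be exactly the triples $\{u, v, e\}$ for each edge $e = \{u, v\}$ of $\Gamma$, with all other $3$-element subsets independent. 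A direct check (using that $\Gamma$ is a simple graph, so two lines of this form can share at most one ground-set element) shows that this data defines a simple rank $3$ matroid. Since $\Gamma$ has at least one edge, $M$ contains a 3-element circuit and hence is not totally disconnected by Lemma \ref{lem:equivtotallydisconnected}.

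It remains to prove $\mathrm{Aut}(M_\Gamma) \cong \mathrm{Aut}(\Gamma)$. A matroid automorphism $\sigma$ of $M_\Gamma$ must permute the rank-$2$ flats of size $\geq 3$, and by construction each edge element $e \in E(\Gamma)$ lies on exactly one such flat, while each vertex $v \in V(\Gamma)$ lies on $\deg_\Gamma(v) \geq 2$ of them. Consequently $\sigma$ preserves the bipartition $V(\Gamma) \sqcup E(\Gamma)$, and the induced permutation of $V(\Gamma)$ preserves incidence of edges, hence is a graph automorphism; conversely, every $\tau \in \mathrm{Aut}(\Gamma)$ extends uniquely to an automorphism of $M_\Gamma$ by acting on edges in the obvious way. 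Combined with the reduction from the first paragraph, this identifies $\mathrm{Aut}(B_f(M_\Gamma))$ with $G$.

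The main obstacle is the rigidity requirement used to reduce to a graph of minimum degree $\geq 2$ without changing the automorphism group; this is the classical Frucht-type step and can be handled by attaching identical asymmetric gadgets to each vertex so that only permutations of the original vertices survive. The only other subtle point is verifying that the triples $\{u, v, e\}$ genuinely generate a rank $3$ matroid, which amounts to checking that distinct lines of the prescribed form meet in at most one point — this uses exactly the assumption that $\Gamma$ has no multiple edges.
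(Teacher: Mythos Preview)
Your reduction via Theorem \ref{thm:fineMatroidAuto} is exactly the paper's strategy. The paper's own proof is a one-liner: it simply cites \cite{boku} (Bonin--Kung), which already shows that every finite group is the automorphism group of some simple rank~$3$ matroid, and then invokes Theorem \ref{thm:fineMatroidAuto}. You instead reconstruct a Bonin--Kung--type result by hand, starting from Frucht's theorem and the line configuration $M_\Gamma$ on $V(\Gamma)\sqcup E(\Gamma)$. Your construction and the computation $\mathrm{Aut}(M_\Gamma)\cong\mathrm{Aut}(\Gamma)$ are correct, and you are right that minimum degree $\geq 2$ is essential: a degree-$1$ vertex $v$ with unique incident edge $e$ can be swapped with $e$ by a matroid automorphism that is not induced by any graph automorphism, so without this hypothesis the identification fails. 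You also make explicit the ``not totally disconnected'' hypothesis (via a $3$-element circuit), which the paper's proof leaves implicit.

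The one soft spot is the Frucht-type thickening step. Attaching ``the same asymmetric gadget to each vertex'' does work, but making it precise so that (i) the copies of the gadget are recognizable inside the new graph, (ii) the automorphism group is unchanged, and (iii) all vertices end up with degree $\geq 2$, takes some care. A cleaner route is to bypass this entirely by citing the regular-graph strengthening of Frucht's theorem: every finite group is the automorphism group of a cubic graph (Frucht, 1949) or more generally of a $k$-regular graph for any $k\geq 3$ (Sabidussi). Then minimum degree $\geq 2$ is automatic and the rest of your argument goes through verbatim. With that substitution your proof is complete and gives a self-contained alternative to the citation of \cite{boku}.
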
 
\begin{proof} This follow from Theorem \ref{thm:fineMatroidAuto} together with the main statement of \cite{boku} which shows   that every finite group is the automorphism group of a simple rank $3$ matroid.
\end{proof}

\section{Isomorphisms of coarse fan structures}\label{sec:isosCoarse}

In this section we investigate isomorphisms between Bergman fans endowed with the coarse structure for loopfree matroids of higher rank.
The following trivial example shows that we need to make some connectedness assumptions to get interesting results.
\begin{example} \label{ex: U} For the totally disconnected matroid $M= U_{n,n}$ on $n \geq 3$ elements the Bergman fan $B(M)$ is simply $\R^n/ \R \mathbbm{1} $. Hence the automorphism group of $B_c(M)$ is isomorphic to $GL_{n-1}(\mathbb{Z})$. 
\end{example}

If Bergman fans decompose in a product of fans, automorphisms can be defined component-wise. We will show now that such a product decomposition only happen for matroids which are non-trivial parallel connections.

\begin{thm}\label{thm:product} 
Let $M$ be a loopfree matroid of rank at least $3$. If $M$ is a non-trivial parallel connection of matroids $M_1$ and $M_2$, then  the Bergman fan  $B_c(M)$
is isomorphic to the product $B_c(M_1) \times B_c(M_2) $. On the other hand, if  $B_c(M)$ is isomorphic to a product $B_c(M_1) \times B_c(M_2) $ of two Bergman fans for matroids $M_i$ of rank at least two, then $M$ is a non-trivial parallel connection.\end{thm}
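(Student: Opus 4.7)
The proof splits cleanly into two independent implications. For the forward direction, assuming $M = P_p(M_1, M_2)$ with $E(M_1) \cap E(M_2) = \{p\}$, I plan to construct an explicit linear isomorphism
\[
\phi : \R^{E(M)}/\R\mathbbm{1} \longrightarrow (\R^{E(M_1)}/\R\mathbbm{1}) \oplus (\R^{E(M_2)}/\R\mathbbm{1})
\]
by sending the class of a representative $a \in \R^{E(M)}$, normalized so that $a_p = 0$, to the pair of coordinate restrictions $([\pi_1(a)], [\pi_2(a)])$, where $\pi_i$ is the projection onto $\R^{E(M_i)}$. The dimension identity $|E(M)| - 1 = (|E(M_1)| - 1) + (|E(M_2)| - 1)$ together with a short kernel check shows $\phi$ is a $\Z$-linear isomorphism. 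To verify that $\phi$ identifies the supports $B(M) \leftrightarrow B(M_1) \times B(M_2)$, I would use the classical description (see \cite[Section 7.1]{oxley}) of the circuits of $P_p(M_1, M_2)$: they are either circuits of $M_i$ not containing $p$, or of the form $(C_1 \setminus \{p\}) \cup (C_2 \setminus \{p\})$ with $C_i \ni p$ a circuit of $M_i$. A case analysis on whether $a_p$ attains the minimum of $a$ on $C_1$ and on $C_2$ shows the tropical min-attained-twice condition on $M$-circuits is equivalent to the separate tropical conditions for $M_1$- and $M_2$-circuits. Finally, since $B_c$ is by definition the coarsest fan structure on its support, and the product of two coarsest fan structures on supports is again coarsest on the product support, $\phi$ automatically induces a fan isomorphism $B_c(M) \to B_c(M_1) \times B_c(M_2)$.

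For the backward direction, assume $\phi : B_c(M) \cong B_c(M_1) \times B_c(M_2)$ with $r(M_i) \geq 2$. My aim is to exhibit $p \in E(M)$ such that $\langle v_p\rangle_{\R_{\geq 0}}$ is not a cone of $B_c(M)$, so that Lemma~\ref{lem:parallelConnectionMissing} immediately concludes $M$ is a non-trivial parallel connection along $p$. After reducing to the case of simple $M$ via simplification, I would use the product structure to extract two complementary linear subspaces
\[
L_1 := \phi^{-1}\bigl((\R^{E(M_1)}/\R\mathbbm{1}) \times \{0\}\bigr), \qquad L_2 := \phi^{-1}\bigl(\{0\} \times (\R^{E(M_2)}/\R\mathbbm{1})\bigr),
\]
intersecting $B(M)$ in subfans abstractly isomorphic to $B_c(M_1)$ and $B_c(M_2)$. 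In the archetypal case $M = P_p(N_1, N_2)$, these subspaces are exactly the loci of vectors constant on $E(N_2)$, respectively on $E(N_1)$. The plan is to recover this coordinate-like structure by combining the star-fan formula \eqref{eqn:stardescription} with the lineality/connected-component computation of \cite[Lemma 2.3]{FrancoisRau} applied to stars at element rays; a joint dimension and ray-counting argument should then single out a distinguished element $p \in E(M)$ corresponding to the intersection of the two ground-set pieces and force $M/p$ to be disconnected.

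The main obstacle is the backward direction. The forward direction is essentially bookkeeping once the circuit description and the universal property of the coarsest fan structure are in hand. In the backward direction, by contrast, the product decomposition yields only abstract linear subspaces $L_1, L_2$ without any a priori matroid interpretation; the delicate point is to translate the linear algebraic product decomposition into a combinatorial statement about the ground set of $M$, which requires a careful interplay of the star-fan formula, lineality counts, and the rank identity $r(M) = r(M_1) + r(M_2) - 1$ forced by dimension counting.
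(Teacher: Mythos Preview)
Your forward direction is essentially the paper's argument: the same explicit linear isomorphism (written slightly differently), followed by the circuit description of $P_p(M_1,M_2)$ and a direct check of the min-attained-twice condition. Two minor slips: your list of circuits omits the circuits of $M_i$ that \emph{contain} $p$ (these are still circuits of $P_p(M_1,M_2)$, see \cite[Proposition~7.1.4]{oxley}), and you do not handle separately the case where $p$ is a coloop in $M_1$ or $M_2$, which the paper dispatches via \cite[Lemma~2.1]{FrancoisRau}. Neither is serious.

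For the backward direction you have identified the correct target --- produce some $p\in E(M)$ whose ray is missing from $B_c(M)$ and invoke Lemma~\ref{lem:parallelConnectionMissing} --- but your proposed route through star-fan computations, lineality counts, and an unspecified ``dimension and ray-counting argument'' is vague and more elaborate than what is actually needed. The paper's argument is short and purely linear-algebraic: reduce to simple $M$, suppose for contradiction that every $v_i$ is a ray of $B_c(M)$, and observe that each such ray must map under $\phi$ into one of the two factors, yielding a partition $E=R_1\sqcup R_2$. The size identity $|E|=|E_1|+|E_2|-1$ then forces, by pigeonhole, one of the sets $\{\phi(v_i):i\in R_j\}$ to sit in a space of strictly smaller dimension than its cardinality, hence to be linearly dependent. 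But the only linear relation among $\{v_i:i\in E\}$ in $\R^E/\R\mathbbm{1}$ is $\sum_{i\in E}v_i=0$, which involves \emph{all} of $E$; contradiction. This replaces your star-fan machinery with a two-line counting argument, and you should use it instead.
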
 

\begin{proof}
We begin with the first statement. Consider two matroids $M_1$ and $M_2$  on the ground sets $E_1$ and $E_2$, and suppose that $M$ is the parallel connection of  $M_1$ and $M_2$ along $p$. Hence $E = E(M) = E_1 \cup E_2$ and  $E_1 \cap E_2 = \{p\}$.

If $p$ is a coloop of either $M_1$ or $M_2$, then the parallel connection along $p$ is a disconnected matroid and the first statement follows from \cite[Lemma 2.1]{FrancoisRau}.
Hence we may assume that $p$ is neither a loop nor a coloop in either $M_1$ or $M_2$. We 
 will show that the support of the fan ${B}_c(M)$ can be mapped bijectively to 
$$B_c(M_1) \times  B_c(M_2) \subset \R^{E_1}/ \R \mathbbm{1}_{E_1} \times \R^{E_2}/\R \mathbbm{1}_{E_2}.$$

We use the notation  $w_i$ for the canonical  basis element of  $\Z^{E_1}$ or $\Z^{E_2}$ when $i $ is in $E_1$ or $E_2$. The element $p$ in the intersection $E_1 \cap E_2$ gives rise to vectors $w_{p_{E_1}} \in \R^{E_1}$ and $w_{p_{E_2}} \in \R^{E_2}$.

Now consider the map
\begin{equation}\label{mapParallel}
\phi_M: \R^{E_1 \cup E_2} / \R \mathbbm{1}_{E_1 \cup E_2} \to \R^{E_1}/ \R \mathbbm{1}_{E_1} \times \R^{E_2}/ \R \mathbbm{1}_{E_2}
\end{equation}
given by $v_i \mapsto w_i$ for  $i  \neq p $ and $v_p \mapsto w_{p_{E_1}} + w_{p_{E_2}}.$
This map is an isomorphism of vector spaces, and the inverse is given by $w_i \mapsto v_i$ for $i \in E_1 \cup E_2$, $i \neq p$ and 
$w_{p_{E_i}} \mapsto - v_{E_{i} \backslash \{p\}}.$

Recall that the affine Bergman fan $\tilde{B}(M)$ consists of points $\sum a_iv_i  \in \R^E$ such that $\max_{i \in C} a_i $ is attained at least twice for  all circuits $C$ of $M$. 
Since $p$ is not a loop or coloop in either $M_1$ or $M_2$, we find by
\cite{oxley}, Proposition 7.1.4, that the circuits of the parallel connection $M$ of $M_1$ and $M_2$ along $p$ are of the form $C = C_i$ where $C_i$ is a circuit of $M_i $ or $C = C_1 \backslash \{p\}  \cup C_2 \backslash \{p\}$ where $C_i $  is a circuit of $M_i$ containing $p$.  
A direct check shows that $x \in  B_c(M)$ if and only if $\phi_M(x) \in B_c(M_1 ) \times  B_c(M_2).$

Conversely, suppose that $B_c(M)$ is isomorphic to the product of two Bergman fans $B_c(M_1)$ and $B_c(M_2)$ via a linear isomorphism $\phi: \R^{E} / \R \mathbbm{1}_{E} \to \R^{E_1}/ \R \mathbbm{1}_{E_1} \times \R^{E_2}/ \R \mathbbm{1}_{E_2}$, where 
$M_i$ is a matroid of rank  $d_i + 1$ on a ground set $E_i$ with $|E_i|= n_i$.
Suppose $|E| = n+1$ and $M$ has rank $d$.
Then $d = d_1 + d_2$ and $n+1 = n_1 + n_2 - 1$.
By passing to the simplification of $M$ if necessary, we
may suppose that $M$ has no parallel elements.
Assume that every $i \in E$ gives rise to a ray in the coarse fan structure. 
Since $B_c(M)$ is a product, each of its rays corresponds to a ray of either $B_c(M_1)$ or $B_c(M_2)$. 
This produces a partition of the rank $1$ rays of $B_c(M)$, which equivalently defines a partition of $E = R_1 \sqcup R_2.$ 

Again let $v_k$  denote the standard basis vector of $\R^E$ corresponding to $k \in E$. 
The map $\phi$ sends $\phi(v_i)$ to $\R^{E_1}/\R \mathbbm{1}_{E_1} \times 0$ for $i \in R_1$ and $\phi(v_j)$ to 
$0 \times \R^{E_2}/\R \mathbbm{1}_{E_2}$.
Since $n+1 = n_1 +n_2-1$,  either $|R_1| \geq n_1$ or $|R_2| \geq n_2$,  so  that either the  vectors $\{\phi(v_i) \ | \ i \in R_1\} $
are linearly dependent in $\R^{E_1}/ \R \mathbbm{1}_{E_1}$, or the vectors $\{ \phi(v_j) \ | \ j \in R_2\}$ are linearly dependent in $\R^{E_2}/ \R \mathbbm{1}_{E_2}$. 
However, there is only one linear dependency among the vectors $v_k$ for $ k\in E$, namely  $\sum_{k \in E} v_k = 0$ and the map $\phi$ is an isomorphism. Therefore, our assumption that every $i \in E$ gives rise to a ray in the coarse fan structure is false. 
It follows from Lemma \ref{lem:parallelConnectionMissing}, that the matroid 
$M$ must be a parallel connection along the element $i$. 
\end{proof}

The next result shows that Bergman fans of non-isomorphic parallel connection matroids may be isomorphic. 

\begin{corollary}\label{cor:isoparallel}
Let $M$ and $M'$ be two loopfree matroids on a set $E$ of size $n+1$  obtained as the non-trivial parallel connection of $M_1$ and $M_2$  along possibly different pairs of elements. Then 
there exists an isomorphism from $B_c(M)$ to $B_c(M')$. 
\end{corollary}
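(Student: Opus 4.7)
The corollary follows almost immediately from Theorem \ref{thm:product}, so the proof is essentially an exercise in composing the isomorphisms produced there. My plan is as follows. Writing $M = P_p(M_1, M_2)$ and $M' = P_{p'}(M_1, M_2)$ (possibly $p \neq p'$), I would first apply the forward direction of Theorem \ref{thm:product} to $M$: since $M$ is a non-trivial parallel connection, the explicit $\Z$-linear map $\phi_M$ constructed in \eqref{mapParallel} induces an isomorphism
\[
\phi_M \colon B_c(M) \xrightarrow{\ \sim\ } B_c(M_1) \times B_c(M_2).
\]
The same theorem applied to $M'$, using the analogous map $\phi_{M'}$ determined by the gluing element $p'$, yields an isomorphism $\phi_{M'} \colon B_c(M') \xrightarrow{\sim} B_c(M_1) \times B_c(M_2)$. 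Composing gives the desired isomorphism
\[
\phi_{M'}^{-1} \circ \phi_M \colon B_c(M) \xrightarrow{\ \sim\ } B_c(M').
\]

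The one thing I would verify carefully is that this composition is really a morphism in the sense of Definition \ref{def:morphism}, i.e.\ that it is induced by a lattice isomorphism. But both $\phi_M$ and $\phi_{M'}$ are described in the proof of Theorem \ref{thm:product} by explicit formulas sending standard basis vectors to standard basis vectors (with one basis vector sent to a sum of two basis vectors, and the inverses sending a basis vector to a negative sum of basis vectors). These are $\Z$-linear maps between the relevant lattices, and the induced maps on quotients by $\R\mathbbm{1}$ are lattice isomorphisms. Hence the composition is a lattice isomorphism, confirming that it is an isomorphism of fans in our sense. Since there is no genuinely new content here beyond invoking Theorem \ref{thm:product} twice, I do not anticipate any obstacle; the statement is really a direct corollary, whose value lies in exhibiting non-isomorphic matroids (obtained, for example, by varying the gluing elements $p, p'$) whose coarse Bergman fans are nevertheless isomorphic.
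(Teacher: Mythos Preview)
Your proposal is correct and follows exactly the paper's approach: the paper's proof also simply composes $\phi_M$ with $\phi_{M'}^{-1}$ from Equation~\eqref{mapParallel}, obtained via Theorem~\ref{thm:product}. Your added remark that the composition is induced by a lattice isomorphism is a harmless elaboration of the same argument.
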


\begin{proof}
It suffices to compose $\phi_M$ and $\phi^{-1}_{M'}$ from Equation \ref{mapParallel} for two parallel connections $M$ and $M'$ of the same matroids $M_1$, $M_2$ along different elements. 
\end{proof}

We will now show that for matroids with a certain connectedness property, automorphisms of the coarse Bergman fan can map rays to rank $k$ only to rays of rank or corank $k$.
\begin{thm}\label{cor:rankcorankk}
Suppose $\phi: B_c(M) \to B_c(M') $ is an isomorphism and let $F$ be a flat of 
$M$ of rank $k$ giving rise to a ray in the coarse structure. Suppose neither $M|F$ nor $M/F$ are non-trivial parallel connections, then $\phi$ sends the ray of $B_c(M)$ in direction $F$ to a ray of rank $k$ or corank $k$ in $B_c(M')$. 
\end{thm}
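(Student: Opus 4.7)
The plan is to reduce the theorem to a dimension-comparison for a product decomposition of coarse Bergman fans, obtained by passing to stars at $v_F$ and its image. First I would show that the hypothesis ``$F$ gives a ray of $B_c(M)$'' already forces both $M|F$ and $M/F$ to be connected. Indeed, by Formula~\eqref{eqn:stardescription} the projective star $\Star_{v_F}(B(M))$ is the projective Bergman fan of the direct sum $M|F\oplus M/F$, whose lineality has dimension equal to the number of connected components of $M|F\oplus M/F$ minus $1$, exactly as used in the proof of Lemma~\ref{lem:parallelConnectionMissing}. For $v_F$ not to be absorbed into a larger lineality space, this number must equal $1$, so both $M|F$ and $M/F$ are connected. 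Writing $F'$ for the flat of $M'$ with $\phi(\R_{\geq 0}v_F)=\R_{\geq 0}v_{F'}$, the same argument applied to $\phi^{-1}$ shows that $M'|F'$ and $M'/F'$ are also connected.

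The isomorphism $\phi$ then descends to an isomorphism of coarse stars, and modding out by the shared $1$-dimensional lineality direction $\R v_F \mapsto \R v_{F'}$ yields a fan isomorphism
\[
\psi\colon B_c(M|F)\times B_c(M/F)\ \xrightarrow{\,\sim\,}\ B_c(M'|F')\times B_c(M'/F').
\]
Setting $k=r(F)$, $k'=r(F')$ and $d+1=r(M)=r(M')$, the factor dimensions are $(k-1,\,d-k)$ on the left and $(k'-1,\,d-k')$ on the right. The hypothesis that neither $M|F$ nor $M/F$ is a non-trivial parallel connection amounts, via Theorem~\ref{thm:product}, to the statement that neither left-hand factor $B_c(M|F)$ nor $B_c(M/F)$ admits a further product decomposition into Bergman fans of matroids of rank $\geq 2$; in other words, the left-hand side is already an \emph{atomic} product decomposition in the class of Bergman fans of connected matroids.

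To finish, I would iteratively apply Theorem~\ref{thm:product} to $B_c(M'|F')$ and $B_c(M'/F')$ to split off any parallel-connection factors, producing an atomic decomposition of the right-hand side as well. Granting a uniqueness statement --- that any two atomic product decompositions of a coarse Bergman fan (as products of Bergman fans of connected non-parallel-connection matroids) must agree up to permutation in the multiset of factor dimensions --- the right-hand side is forced to have exactly two atomic factors, so that $M'|F'$ and $M'/F'$ are themselves not non-trivial parallel connections, and $\{k-1,\,d-k\}=\{k'-1,\,d-k'\}$. This gives $k'\in\{k,\,d+1-k\}$, i.e.\ the ray $v_{F'}$ has rank $k$ or corank $k$ as claimed.

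The main obstacle is the uniqueness of the atomic product decomposition: the isomorphism $\psi$ need not a priori respect the product splittings of the two sides, so matching up factors requires an intrinsic characterization. I would try to establish it by identifying the ambient linear subspace of each atomic factor as a canonical subspace of the ambient vector space --- for instance, as the linear span of a maximal set of rays whose joint span carries a Bergman-fan substructure not further decomposable as a product --- and then showing any fan isomorphism must permute these canonical subspaces. Combined with the structure theory of matroids as iterated parallel connections along single elements (see \cite[Chapter~7]{oxley}), this ought to force the dimension multisets on the two sides to agree.
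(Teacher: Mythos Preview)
Your approach is essentially the paper's: pass to the star at $v_F$, mod out by the ray, identify the quotient as $B(M|F)\times B(M/F)$ via Formula~\eqref{eqn:stardescription}, and compare with the analogous product $B(M'|F')\times B(M'/F')$ on the other side, using Theorem~\ref{thm:product} to say that the left-hand factors are indecomposable. Your preliminary observation that $M|F$ and $M/F$ must be connected (from the lineality count, as in Lemma~\ref{lem:parallelConnectionMissing}) is correct and a nice clarification; the paper does not make this explicit.

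Where you and the paper diverge is in the handling of the step you flag as the ``main obstacle.'' The paper's proof is terse at exactly this point: after noting that neither $B(M|F)$ nor $B(M/F)$ splits, it simply writes ``Hence the star of $B(M')$ at $v_{F'}$ must be a product of Bergman fans of the same dimensions. Hence $F'$ is of rank or corank $k$.'' No justification is given for why the transferred product decomposition $\psi(B(M|F))\times\psi(B(M/F))$ must match, dimensionwise, the intrinsic decomposition $B(M'|F')\times B(M'/F')$. In other words, the uniqueness-of-atomic-decomposition issue you isolate is left implicit in the paper. Your proposal to establish it by identifying each atomic factor with a canonical linear subspace and showing a fan isomorphism permutes these is a reasonable line of attack, and is more than the paper itself supplies. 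So your reading is accurate and your caution well placed; the paper treats this step as evident.
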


\begin{proof}
By Formula \ref{eqn:stardescription}, for any flat $F$  any matroid $M$, we have 
$$\Star_{v_{F}}(\tilde{B}(M)) = \tilde{B}(M|{F}) \times \tilde{B}(M/{F}).$$
The above fan has a two dimensional lineality space spanned by $\mathbbm{1}_{E}$ and $v_F$. 
Therefore, taking the quotient of the star of  the projective Bergman fan by $v_{F}$ gives, 
$$\frac{\Star_{v_{F}}(B(M))}{\langle v_{F}\rangle} \cong  
B(M|{F}) \times B(M/{F}),$$
where  $\dim B(M|F) = \rank (F) - 1 = k-1$ as well as  $\dim {B}(M/F) = \rank(M)  -\rank (F) -1 = \rank(M) -k.$ 
If neither  $M|F$ nor $M/F$ are non-trivial parallel connections, then by Theorem \ref{thm:product}  neither $B(M|F)$ nor ${B}(M/F)$ splits as a non-trivial product of Bergman fans.

Suppose the ray  generated by $v_F$ is a ray in the coarse structure, which is mapped by $\phi$ to the ray generated by $v_{F'}$ for a flat $F'$ of $M'$. Then
 the above quotient of the star fan of $B_c(M)$ at $v_F$ must be sent to the 
corresponding quotient of the star fan of $B_c(M') $ at $v_{F'}$. Hence the star of $B(M')$ at $v_{F'}$ must be a product of Bergman fans of the same dimensions.  Hence $F'$ is of rank or corank $k$. 
\end{proof}

In Section \ref{sec:modular}, we will apply this result in the following situation.

\begin{cor}\label{cor:rankcorankone}
Let $\phi$ be an automorphism of the coarse Bergman fan $B_c(M)$ of a loopfree matroid $M$ with ground set $E$. Suppose that for every element of the ground set $i \in E$ the matroid $M/i$ is not a non-trivial parallel connection. Then $\phi$ maps rays of rank $1$ to rays of rank $1$ or of corank $1$. 
\end{cor}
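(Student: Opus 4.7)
The plan is to deduce this corollary directly from Theorem \ref{cor:rankcorankk}, specialized to rank 1 flats. Given an automorphism $\phi$ of $B_c(M)$ and a rank 1 ray $\R_{\geq 0} v_F$ of $B_c(M)$ corresponding to a rank 1 flat $F$, it suffices to verify that neither $M|F$ nor $M/F$ is a non-trivial parallel connection; then Theorem \ref{cor:rankcorankk} applied with $k=1$ immediately yields that $\phi(\R_{\geq 0} v_F)$ is a ray of rank $1$ or corank $1$, which is the statement of the corollary.

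The condition on $M|F$ is automatic. Since $\rank(F) = 1$, the matroid $M|F$ has rank $1$, whereas any non-trivial parallel connection $P_p(M_1, M_2)$ with $\rank(M_i) \geq 2$ sharing a single element $p$ has total rank $r(M_1) + r(M_2) - 1 \geq 3$. So a rank $1$ matroid is trivially not a non-trivial parallel connection, and this hypothesis of Theorem \ref{cor:rankcorankk} comes for free.

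For the condition on $M/F$, I would use the assumption of the corollary on the matroids $M/i$. In the principal case where $M$ is simple (the setting in which this corollary is applied in Section \ref{sec:modular}), every rank $1$ flat has the form $F = \{i\}$, so $M/F = M/i$ and the hypothesis is exactly what is needed. In the general loopfree setting, picking any $i \in F$ one has that $M/F$ is obtained from $M/i$ by deleting the loops $F \setminus \{i\}$ (which are exactly the elements of $M$ parallel to $i$), and a short verification transfers the hypothesis from $M/i$ to $M/F$. The only substantive observation beyond invoking Theorem \ref{cor:rankcorankk} is the rank count excluding $M|F$, so no serious obstacle is expected here; this reflects the role of the corollary as a convenient repackaging of the preceding theorem for the rank 1 situation used later in the paper.
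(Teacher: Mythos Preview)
Your proposal is correct and follows exactly the paper's approach: the paper's proof consists of the single sentence ``The statement is an immediate specialisation of Theorem \ref{cor:rankcorankk} in the case $k =1$,'' and you have simply unpacked the two hypotheses (on $M|F$ and $M/F$) needed to invoke that theorem. Your additional remarks on the non-simple case go slightly beyond what the paper records, but the core argument is identical.
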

 
 \begin{proof}
 The statement is an immediate specialisation of Theorem \ref{cor:rankcorankk} in the case $k =1$. 
 \end{proof}

\section{Cremona maps}

Let $M$ be a matroid of rank $d+1$ on a ground set $E$ of $n+1$ elements. 
As in Section \ref{sec:MatFans}, we write $v_F = \sum_{i \in F} v_i$ or every subset $F \subseteq E$.  Recall that $\mathbbm{1} = v_E$. 

\begin{definition}\label{def:Cremona}
Consider a basis  $b = \{b_0, \dots, b_{d}\}$ of a simple matroid $M$.
For each basis element $b_j$ in $b$ we define a flat $B_j$ by $$B_j := \cl \{b_0, \dots , \widehat{b_j}, \dots , b_{d}\},$$
where $\widehat{b_j}$ indicates that $b_j$ is left out. 

We define the $\Z$-linear map $\crem_{b} \colon \R^{E}   \to \R^{E} $  by  
$v_{b_j}   \mapsto v_{B_j} $ for the basis elements $b_j$, and 
$v_k  \mapsto v_k $ for all $k \in E \backslash b$. 

If $\crem_b$ maps the line $\R \mathbbm{1}$ to itself, we also write 
$\crem_b: \R^{E } / \R \mathbbm{1} {\rightarrow} \R^{E} / \R \mathbbm{1}$ for the quotient map.

\end{definition}

We wish to know under what conditions on the matroid $M$ the map $\crem_{b}$ preserves the coarse Bergman fan  $B_c(M)$. This is the case if and only if  $\crem_b$ preserves the support  of the (projective) Bergman fan, which we denote by $B(M)$.

Recall the Cremona map $\crem_{E} \colon B(U_{n+1, n+1}) \to B(U_{n+1, n+1})$ from Example \ref{ex:cremUniform}. This map is defined by  $v_i \to -v_i$ for all $i \in E$.
We can consider the action of this map $\crem_{E}$ restricted to Bergman fans $B(M)$ for any matroid $M$ on the ground set $E$. 

\begin{lemma}\label{CremonaOnRn}
Let $M$ be a matroid of rank $d+1$ on the ground set $E$. The image of $B(M)$   under $\crem_{E}$ is the support of a Bergman fan $B(M')$  if and only if $M$ is totally disconnected.
\end{lemma}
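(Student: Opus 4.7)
My plan is to base the argument on a single characterization: for any loopfree matroid $N$ on $E$ and any $F \subseteq E$, the class of $v_F$ in $\R^E/\R\mathbbm{1}$ lies in $B(N)$ if and only if $F$ is a flat of $N$. To verify this, I would check that any representative $v_F + s\mathbbm{1}$ satisfies the Bergman circuit condition iff every circuit $C \not\subseteq F$ has $|C \setminus F| \geq 2$, and that this in turn is equivalent to $\cl(F) = F$: if $\cl(F) \supsetneq F$, picking $j \in \cl(F)\setminus F$ yields a circuit $C \subseteq F \cup \{j\}$ through $j$ with $|C\setminus F|=1$; conversely, when $F$ is a flat, extending $C \cap F$ to a basis of $F$ together with $j \in C \setminus F$ would contradict $C$ being a circuit.

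For sufficiency, when $M$ is totally disconnected Lemma \ref{lem:equivtotallydisconnected}(4) gives that $B(M)$ is a linear subspace of $\R^E/\R\mathbbm{1}$, and hence $\crem_E(B(M)) = -B(M) = B(M)$ is itself a Bergman fan.

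For necessity, assume $\crem_E(B(M)) = B(M')$ for some loopfree matroid $M'$ on $E$. Since $-v_F \equiv v_{E\setminus F} \pmod{\R\mathbbm{1}}$, the characterization above sends flats of $M$ to flats of $M'$ under complementation. When $M$ is simple the singletons $\{i\}$ are rank-one flats of $M$, so each $E \setminus \{i\}$ is a flat of $M'$; intersecting these flats of $M'$ forces every subset of $E$ to be a flat of $M'$, which gives $M' = U_{n+1, n+1}$. Matching the dimensions of the Bergman fans $B(M)$ and $B(M')$ would then force $M = U_{n+1, n+1}$, which is totally disconnected.

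For the general loopfree case, I would reduce to the simple case via simplification. The parallel pairs of $M$ are two-element circuits, so $\tilde B(M)$ lies in the subspace $V = \{(a_k) \in \R^E : a_i = a_j \text{ whenever } i \sim_M j\}$, and $\tilde B(M') = -\tilde B(M) \subseteq V$. I would prove that for any loopfree $N$, the containment $\tilde B(N) \subseteq \{a_i = a_j\}$ holds iff $i \sim_N j$: the reverse direction is the Bergman condition applied to the circuit $\{i, j\}$, and the forward direction follows from testing the containment against flat-indicator vectors $v_F$, yielding $i \in F \iff j \in F$ for every flat $F$ and hence $\cl_N(\{i\}) = \cl_N(\{j\})$. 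By symmetry this shows $M$ and $M'$ have identical parallel class structures, so the coordinate projection from the introduction identifies $V/\R\mathbbm{1}$ with $\R^{\hat E}/\R\mathbbm{1}_{\hat E}$, carrying $(B(M), B(M'))$ to $(B(\hat M), B(\hat{M'}))$ and intertwining $\crem_E|_V$ with $\crem_{\hat E}$. Applying the simple case to $\hat M$ then finishes the proof; the main technical obstacle is this parallel-class reduction, which is needed because the bare complementation argument for flats works cleanly only in the simple setting.
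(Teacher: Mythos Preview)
Your argument is correct, and it takes a genuinely different route from the paper's.

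The paper proceeds via intersection theory on the permutohedral fan: by the Huh--Katz formula there are Minkowski weights $\alpha$ and $\crem_E(\alpha)$ with $\mu^i(M)=\deg\bigl(\alpha^{d-i}\cup\crem_E(\alpha)^i\cup B(M)\bigr)$, and since $\crem_E$ is an automorphism of the ring of Minkowski weights compatible with the degree map, applying it yields $\mu^d(M)=\deg\bigl(\alpha^d\cup B(M')\bigr)=\mu^0(M')=1$, whence $M$ is totally disconnected by Lemma~\ref{lem:equivtotallydisconnected}(5). This is short and leverages machinery already set up in the paper, but it is not self-contained: it relies on \cite{HuhKatz}.

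Your approach is entirely elementary. The key lemma ``$v_F\in B(N)$ iff $F$ is a flat of $N$'' is correct (your sketch of the circuit argument is right), and the complementation step $-v_F\equiv v_{E\setminus F}$ immediately forces every $E\setminus\{i\}$ to be a flat of $M'$ when $M$ is simple; intersecting these gives the Boolean lattice, so $M'=U_{|E|,|E|}$ and a dimension count finishes the simple case. Your reduction to the simple case via matching parallel classes is also sound: the forward direction of ``$\tilde B(N)\subseteq\{a_i=a_j\}$ iff $i\sim_N j$'' follows exactly as you say, by testing on flat indicators, and the intertwining of $\crem_E|_V$ with $\crem_{\hat E}$ under the coordinate projection is immediate since both are $v\mapsto -v$. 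One small point worth making explicit at the end: ``$\hat M$ totally disconnected $\Rightarrow M$ totally disconnected'' holds because the lattice of flats of $M$ is then Boolean on the parallel classes, forcing $M$ to be a direct sum of rank-one matroids.

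What each approach buys: the paper's proof is a two-line application of existing theory and handles the loopfree case uniformly; yours avoids Chow rings and Minkowski weights altogether, gives the explicit identification $M'=U_{|E|,|E|}$, and would be accessible to a reader who has only the combinatorial definition of the Bergman fan.
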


\begin{proof}
Suppose $|E| = n+1$. 
We use the description of the reduced characteristic polynomial of Huh and Katz \cite{HuhKatz} in terms of intersection theory of Minkowski weights on the $n$-dimensional permutahedral toric variety. This is the toric variety defined by the complete fan $B(U_{n+1, n+1})$.  The map $\crem_{E}$ is an automorphism of the fan $B(U_{n+1, n+1})$ and hence produces an automorphism of its Chow ring compatible with the degree map by Proposition \ref{cor:chowdeg}. Hence we also have an automorphism of the ring of Minkowski weights.

  By \cite[Lemma 6.1]{HuhKatz}, there is a Minkowski weight $\alpha$ such that for all $i$, we have 
 $\mu^i (M) =  \alpha^{d-i} \cup \crem_{E}(\alpha)^i \cup B(M)$, 
 where $\cup $ denotes the intersection product and $\mu^i(M)$ is the $i$-th unsigned coefficient of the reduced characteristic polynomial of $M$.
Since the map $\crem_{E}$ gives an automorphism of the ring of Minkowski weights we have  
$$\alpha^d \cup B(M) =  \crem_{E}(\alpha)^d \cup B(M).$$
Now the left hand side is $\mu^0(M)   = 1$ and the right hand side is equal to  $\mu^d (M)$. 
By Lemma \ref{lem:equivtotallydisconnected}, the coefficient $\mu^d(M)$ is equal to one if and only if $M$ is totally disconnected.  \end{proof}

We will now prove a criterion about the existence of Cremona maps (with respect to a suitable basis) acting as automorphisms of the Bergman fan with the coarse structure. 

\begin{thm}\label{thm:cremiff} Let $b$ be a basis of a simple connected matroid $M$.  For any pair of elements $i, j \in E$, let $F_{i j}$ be the rank $2$ flat which is the closure of $i$ and $j$. The map $\crem_b$ descends to a linear map $\crem_b:  \R^{E} / \R \mathbbm{1} \rightarrow \R^{E} / \R \mathbbm{1}$ mapping  $B(M)$  to itself, if only if the sets 
$\{F_{i j} \backslash \{i, j\}\}_{i, j \in b}$ partition the set $E \backslash b$ into pairwise disjoint subsets.  
\end{thm}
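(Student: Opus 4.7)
The plan is to prove both implications of the "if and only if" separately.

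For the necessity direction, I will show that mere descent of $\crem_b$ to $\R^E/\R\mathbbm{1}$ already forces the partition condition. The key calculation is
\[
\crem_b(\mathbbm{1}) \;=\; \sum_{j=0}^{d} v_{B_j} + \sum_{k \in E \setminus b} v_k.
\]
Since $b_i \in B_j$ iff $j \neq i$, the coefficient of $v_{b_i}$ here equals $d$. For $k \in E \setminus b$, writing $C_k$ for the fundamental circuit of $k$ with respect to $b$, we have $k \in B_j$ iff $b_j \notin C_k$, so the coefficient of $v_k$ equals $1 + (d+1) - (|C_k|-1) = d + 3 - |C_k|$. Descent requires $\crem_b(\mathbbm{1}) \in \R\mathbbm{1}$, which forces $|C_k| = 3$ for every $k \in E \setminus b$. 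Writing $C_k = \{k, b_{i_k}, b_{j_k}\}$ places each non-basis element $k$ in a unique set $F_{b_{i_k} b_{j_k}} \setminus \{b_{i_k}, b_{j_k}\}$, so the sets $\{F_{ij} \setminus \{i,j\}\}_{i,j \in b}$ partition $E \setminus b$.

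For the sufficiency direction, assume the partition condition. The same computation gives $\crem_b(\mathbbm{1}) = d \cdot \mathbbm{1}$, so $\crem_b$ descends. A direct expansion shows $\crem_b^2(v_{b_l}) = v_{b_l} + (d-1)\mathbbm{1}$ and $\crem_b^2(v_k) = v_k$, so $\crem_b$ is an involution on $\R^E/\R\mathbbm{1}$, and it suffices to establish $\crem_b(B(M)) \subseteq B(M)$. I verify this via the circuit characterization of $\tilde B(M)$: $(a_i) \in \tilde B(M)$ iff $\min_{i \in C} a_i$ is attained at least twice for every circuit $C$. For $(a_i) \in \tilde B(M)$, the image representative modulo $\mathbbm{1}$ has coordinates $y_{b_l} = -a_{b_l}$ and $y_k = a_k - a_{b_{i_k}} - a_{b_{j_k}}$. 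For a fundamental circuit $C_k$, a short case analysis on which two of $\{a_k, a_{b_{i_k}}, a_{b_{j_k}}\}$ attain the minimum yields the min-twice condition on $(y_i)|_{C_k}$: for instance, if $a_{b_{i_k}} = a_{b_{j_k}} \leq a_k$ then $y_{b_{i_k}} = y_{b_{j_k}} = -a_{b_{i_k}} \leq y_k$, while if $a_k = a_{b_{i_k}} \leq a_{b_{j_k}}$ then $y_{b_{j_k}} = y_k = -a_{b_{j_k}} \leq y_{b_{i_k}}$. For a general circuit $C$ with $|C \setminus b| \geq 2$, the Bergman conditions on the fundamental circuits $\{C_k : k \in C \setminus b\}$, combined with the condition on $C$ itself and on auxiliary circuits produced by circuit elimination, together imply the min-twice condition for $(y_i)$ on $C$; disjointness of the partition is used essentially to keep these interacting fundamental circuits under control.

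I expect the main obstacle to be the verification on non-fundamental circuits in the last step, where the bookkeeping couples constraints across several fundamental circuits meeting $C$. A more structural alternative is the flag approach: from the explicit formula
\[
\crem_b(v_F) \;=\; \sum_{b_l \in F \cap b} v_{B_l} + v_{F \setminus b}
\]
one tries to show, flag by flag, that $\crem_b(\sigma_{\mathcal{F}})$ lies in the support of $B(M)$. In examples the image of such a cone matches a cone built from the "dual" flats $\bigcap_{b_l \in F_i \cap b} B_l$ together with the singletons of elements in $F_i \setminus b$, and the partition condition makes the relevant intersections behave uniformly across the flag.
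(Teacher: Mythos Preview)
Your necessity argument is correct and in fact cleaner than the paper's. The paper uses the full hypothesis that $\crem_b$ preserves $B(M)$: it passes to the minor $M\backslash(C\setminus k)/k$ for each $k\notin b$, observes that the standard Cremona on $\R^b/\R\mathbbm{1}$ must preserve its Bergman fan, and invokes Lemma~\ref{CremonaOnRn} (which in turn rests on Huh--Katz) to force total disconnectedness and hence a unique parallel pair. Your computation shows that descent alone forces the coefficient of each $v_k$ in $\crem_b(\mathbbm{1})$ to equal $d$, hence $|C_k|=3$; this immediately gives the partition via uniqueness of the fundamental circuit. This is strictly more elementary and does not use connectedness of $M$ or preservation of $B(M)$.

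The sufficiency direction, however, has a real gap. Your treatment of fundamental circuits is fine, but the sentence handling a general circuit $C$ with $|C\setminus b|\ge 2$ is not a proof: ``the Bergman conditions on the fundamental circuits \ldots\ combined with the condition on $C$ itself and on auxiliary circuits produced by circuit elimination, together imply the min-twice condition'' does not indicate which eliminations to perform or why the resulting inequalities close up. Already for a size-$3$ non-coordinate circuit in a Dowling-type matroid (say $k_1\in F_{b_1b_2}$, $k_2\in F_{b_2b_3}$, $k_3\in F_{b_1b_3}$) the verification couples three fundamental-circuit constraints with the constraint on $C$, and for longer circuits (think of cycles in $K_{d+1}$ when $|G|=1$) the bookkeeping grows with $|C|$; there is no evident uniform inductive mechanism in what you wrote.

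The paper avoids this entirely by working flag-wise rather than circuit-wise. The key is Lemma~\ref{lem:CremInvolution}: under the partition hypothesis every flat $G$ decomposes as $F_{b\cap G}\sqcup(G\cap F_{b\setminus G})$, and one computes $\crem_b(v_G)=v_{F_{b\setminus G}}+v_{G\cap F_{b\setminus G}}$ in $\R^E/\R\mathbbm{1}$. For a maximal flag $G_1\subsetneq\cdots\subsetneq G_d$ this produces the flats $F_{b\setminus G_j}$ together with the connected components $K_1,\dots,K_s$ of the various $G_j\cap F_{b\setminus G_j}$, and the paper checks directly that this collection is \emph{nested} for the minimal building set, so $\crem_b(\rho_{\mathcal G})$ lands in a cone of $B_m(M)$. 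Your final paragraph gestures at exactly this approach, but the formula you write, $\crem_b(v_F)=\sum_{b_l\in F\cap b}v_{B_l}+v_{F\setminus b}$, is the unreduced definition; the substantive step you are missing is the simplification to $v_{F_{b\setminus F}}+v_{F\cap F_{b\setminus F}}$ and the verification that the resulting family of flats is nested. That is where the argument actually lives.
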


We will need the following lemma in the proof. 

\begin{lemma}\label{lem:CremInvolution} 
Assume that $b$ is a basis of a simple matroid $M$, such that the sets $\{F_{i j} \backslash \{i, j\}\}_{i, j \in b}$ partition $E \backslash b$. Then $\crem_b$ descends to a linear automorphism on the quotient $\R^E / \R  \mathbbm{1}$.

For all subsets $a \subset b$ we define an associated flat $F_{a} = \cl\{ b_i: b_i \in a\}$. 
Then each flat $G$ of $M$ decomposes into the disjoint union of the two flats 
$F_{b \cap G}$ and $G \cap F_{b \backslash G}$. Moreover, 
$$\crem_b(v_{G \cap F_{b \backslash G}})  =v_{G \cap F_{b \backslash G}}  \qquad \text{and}  \qquad \crem_b(v_{F_{b \cap G}})  = v_{F_{b \backslash G}}  \in \R^{E} / \R \mathbbm{1}.$$
\end{lemma}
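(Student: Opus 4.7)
The plan is to first establish the set-theoretic decomposition of flats, then deduce the two vector identities, and finally conclude the automorphism claim by showing that $\crem_b$ is an involution. Throughout I write $a = b\cap G$ and $a' = b\setminus G$.

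First I would verify $F_a \cap F_{a'} = \emptyset$: since $a \sqcup a' = b$ is a basis, one has $r(F_a) + r(F_{a'}) = d+1$ and the join $F_a \vee F_{a'}$ contains $b$, hence equals $E$, so submodularity forces $r(F_a \cap F_{a'}) \leq 0$, and simplicity of $M$ rules out a nonempty intersection. This also identifies $F_a \cap b = a$, since any extra basis element in $F_a$ would yield an independent subset of $F_a$ strictly larger than $a$. To show $G = F_a \cup (G \cap F_{a'})$ I would take $e \in G$: if $e \in b$ there is nothing to do, and for $e \in E\setminus b$ the partition hypothesis gives a unique pair $\{i,j\}\subseteq b$ with $e \in F_{ij}$. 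If $\{i,j\} \subseteq a$ then $F_{ij} \subseteq F_a$ (and similarly for $a'$), placing $e$ in the appropriate piece. The mixed case $i \in a$, $j \in a'$ is excluded because $F_{ij} = \cl\{i,e\}$ is contained in any flat containing $i$ and $e$, forcing $j \in G$ and contradicting $j \in a'$. The identity $\crem_b(v_{G \cap F_{a'}}) = v_{G \cap F_{a'}}$ then falls out: $F_{a'}\cap b = a'$ yields $G \cap F_{a'} \cap b \subseteq G \cap a' = \emptyset$, so every index of $v_{G\cap F_{a'}}$ lies in $E\setminus b$, which $\crem_b$ fixes pointwise.

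Next I would prove the main identity $\crem_b(v_{F_a}) \equiv v_{F_{a'}} \pmod{\R\mathbbm{1}}$ for arbitrary $a \subseteq b$ by a direct coefficient computation. Expanding $v_{F_a} = \sum_{b_j \in a} v_{b_j} + \sum_{k \in F_a \setminus b} v_k$ and applying $\crem_b$, I need the multiplicity of each standard basis vector in $\sum_{b_j \in a} v_{B_j}$. The combinatorial input is twofold: for $k \in E\setminus b$ belonging to the unique rank-$2$ flat $F_{ij}$, one has $k \in B_l$ if and only if $l \notin \{i,j\}$ (otherwise $b_i \in \cl\{b_j, k\} \subseteq B_i$ would make $B_i$ contain the whole basis, contradicting $r(B_i) = d$); and $k \in F_a$ if and only if $\{i,j\} \subseteq a$, using $F_{ij} \cap b = \{i,j\}$. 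Distinguishing five cases (basis elements in $a$ vs.\ $a'$, and non-basis $k$ with $\{i,j\}$ contained in $a$, straddling, or contained in $a'$), a direct tally shows that the difference $\crem_b(v_{F_a}) - v_{F_{a'}}$ has the \emph{constant} coefficient $|a|-1$ on every $v_e$, hence equals $(|a|-1)\mathbbm{1}$. The main obstacle is managing this bookkeeping cleanly; the crucial alignment is that the $+1$ contribution from $\sum_{k \in F_a \setminus b} v_k$ lands precisely on the non-basis $k$ with $\{i,j\}\subseteq a$, lifting their coefficient from $|a|-2$ to $|a|-1$ and bringing all five cases into agreement modulo $\mathbbm{1}$.

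Finally, I would deduce the automorphism claim. Specializing the main identity to $a = b$ (so $F_a = E$ and $F_{a'} = \emptyset$) yields $\crem_b(\mathbbm{1}) \in \R\mathbbm{1}$, so $\crem_b$ descends to the quotient. Applying the main identity once to $a$ and once to $a'$ gives $\crem_b^2(v_{F_a}) \equiv v_{F_a}$ for every $a \subseteq b$; together with $\crem_b(v_k) = v_k$ for $k \in E \setminus b$ and the fact that the $v_{b_j} = v_{F_{\{b_j\}}}$ together with the $v_k$ jointly span $\R^E/\R\mathbbm{1}$, this forces $\crem_b^2 = \mathrm{id}$ on the quotient. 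Hence $\crem_b$ is a linear involution, and in particular an automorphism.
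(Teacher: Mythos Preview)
Your proof is correct and follows essentially the same approach as the paper: the decomposition argument, the identification $\crem_b(v_{F_a}) = v_{F_{a'}} + (|a|-1)\mathbbm{1}$ via a case-by-case coefficient count using the sets $C_{ij} = F_{ij}\setminus\{i,j\}$, and the descent claim all match. The only differences are organizational: the paper establishes descent first by directly computing $\crem_b(\mathbbm{1}) = d\mathbbm{1}$, whereas you obtain it as the special case $a = b$ of the main identity; and you explicitly prove the involution (hence automorphism) claim here, while the paper's proof of this lemma only shows descent to an endomorphism and defers the involution computation to the proof of Theorem~\ref{thm:cremiff}.
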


\begin{proof}
Note that the partition hypothesis implies that every element $a \in E \backslash b$ is contained in precisely $d-1$ of the sets $B_j := \cl \{b_0, \dots , \hat{b_j}, \dots , b_{d}\}$. Therefore the Cremona automorphism $\crem_b$ of $\R^{E}$ maps 
$\mathbbm{1}$ to $\sum_{i=0}^d v_{B_i} + \sum_{a \notin b} v_a = d \mathbbm{1}$, hence it descends
to the quotient space $\R^{E} / \R \mathbbm{1}$ as the endomorphism  $\crem_b$ from Definition \ref{def:Cremona}.

Let $G$ be any flat in $M$ and consider the subflat $F_{b \cap G} \subset G$. Let $h \in G \backslash F_{b \cap G}$. By assumption, $h \in \cl\{b_i,b_j\} $ for some $i,j$. The basis vector $b_i$ cannot be contained in $G$, since this would imply $b_j \in \cl\{h,b_i\} \subset G$, and therefore $h \in F_{b \cap G}$. For the same reason the basis vector $b_j$ cannot be contained in $G$. Hence $h \in F_{b \backslash G}$. This implies that $G \backslash F_{b \cap G} = G \cap F_{b \backslash G}$. Therefore $G$ is the disjoint union of the two flats $F_{b \cap G}$ and $G \cap F_{b \backslash G}$. 

To prove the remaining statements, note that $G \cap F_{b \backslash G} $ does not intersect $b$. Therefore, the map $\crem_b$ fixes the vector $v_{G \cap F_{b \backslash G}}$. 
 Next put  $g= |b \cap G|$ and consider the action of Cremona:
\[\crem_b(v_{F_{b \cap G}}) = \sum_{b_i \in b \cap G} \crem_b(v_{b_i})  + v_{F_{b \cap G} \backslash b} = \sum_{b_i \in b \cap G} v_{B_i}  + v_{F_{b \cap G}\backslash b}.  \]
We write $C_{pq} = F_{b_p,b_q} \backslash \{b_p,b_q\}$. By our assumption on the matroid $M$ the set  $F_{b \cap G} \backslash b $ is the union of all $C_{pq}$ for $b_p$ and $b_q$ in $b \cap G$, so that $v_{F_{b \cap G} \backslash b} = \sum_{b_p, b_q \in b \cap G} v_{C_{pq}}$. 
Similarly, $B_i \backslash b$ is the union of all $C_{pq}$ for all  $b_p$ and $b_q$ both not equal to $b_i$, so that  $v_{B_i \backslash b} = \sum_{b_p \neq b_i, b_q \neq b_i} v_{C_{pq}}$.

Hence we can calculate
\[\sum_{b_i \in b \cap G} v_{B_i} = 
 g \sum_{b_j \in b \backslash G} v_{b_j} + (g-1) \sum_{b_j \in b \cap G} v_{b_j} +\]
 \[  (g-2) \sum_{b_p,b_q \in G} v_{C_{pq}} + (g-1) \sum_{\mbox{either }b_p \mbox { or } b_q \in G} v_{C_{pq}} + g \sum_{b_p, b_q \notin G} v_{C_{pq}},\]
 so that
 \[\crem(v_{F_{b \cap G}}) =  v_{F_{b \backslash G}} +  (g-1) \mathbbm{1} .\]
 This proves the lemma. 
\end{proof}

\begin{proof}[Proof of Theorem \ref{thm:cremiff} ]

Let $b$ be a basis such that $\crem_b$ descends to $\R^E / \R \mathbbm{1}$ and preserves $B(M)$.  Put $C = E \backslash b$. We consider the matroid $M|b$ which is the same as the matroid $M \backslash C$ and isomorphic to  the uniform matroid $U_{d+1, d+1}$ for $d+1 = rk(M)$. For each $k \in C$, consider the rank $d+1$ matroid
$M \backslash \{ C \backslash k\}$.
This matroid is loopfree since $M$ is assumed to be loopfree. Therefore, $M \backslash \{ C \backslash k\} / k$ is a 
matroid of rank $d$
 on the ground set $b$.
 
Since $M$ is simple, an element $i\in b$ and $k$ cannot be parallel in $M$, so that $i$ is not a loop of $M \backslash \{ C \backslash k\} / k$,
and we can consider its Bergman fan in $\R^{b}/ \mathbbm{1}_{b}$. It
 must be preserved under $\crem_b$ acting on $\R^{b}/ \mathbbm{1}_{b}$,
since the Cremona map commutes with coordinate projections and the Bergman fans of matroid minors of $M$ can be determined from the coordinate projections \cite[Proposition 2.22]{Shaw}. 

By Lemma \ref{CremonaOnRn}, the matroid $M / k \backslash \{ C \backslash k\}$ must be totally disconnected for all $k$. 
Since it is a rank $d$ loopfree matroid on the $d+1$ elements of $b$, by the pigeon hole principle there must exist a unique pair of elements in $b$, say $\{i, j\}$, which are parallel elements  in $M / k \backslash \{ C \backslash k\}$. This implies that $k \in F_{i,j}\backslash \{i,j\}$ and we have a partition of $E \backslash b$ given by the sets $F_{i, j} \backslash \{i, j\}$ for $i, j \in b$.

For the other direction assume  that $b$ is a basis of $M$ such that the sets $\{F_{i j} \backslash \{i, j\}\}_{i, j \in b}$ partition $E \backslash b$. For simplicity we write again $C_{ij} :=F_{ij}\backslash \{i,j\}$. Then $C = \bigcup_{p,q} C_{pq}$. 

Note that if  $k \in E \backslash b$, then $v_k$ is fixed by $\crem_b$. If $b_i \in b$, we have 
\begin{eqnarray*}
\crem_b^2(v_{b_i}) 
& = &  \crem_b(v_{B_i}) =  \sum_{j \neq i} v_{B_j} + \sum_{p \neq i, q \neq i} v_{C_{pq}} \\& = & d v_{b_i} + (d-1) \sum_{j \neq i} v_{b_j} + (d-1)\sum_{k \in C} v_k,
\end{eqnarray*}
so that under the above hypothesis $\crem_b^2(v_{b_i})= v_{b_i} + (d-1) \mathbbm{1}.$ Therefore, 
$\crem_b$ is an involution on $\R^{E} / \R \mathbbm{1}$. 

We have to show that $\crem_b$ preserves the support $B(M)$ of the Bergman fan.
 Since we are only interested in the support of the fan, the fan structure we consider on it does not matter. Therefore it suffices to show that for every maximal cone $\gamma$ in $B_f(M)$ (with respect to the the fine fan structure), the image $\crem_b(\gamma)$ is contained in a cone in $B_m(M)$ (with respect to the minimal nested set structure). Namely this implies
 $\crem_b(B(M) \subset {B}(M)$,   and since $\crem_b$ is an involution, equality of sets follows.

Again write $F_{a} = \cl\{ b_i: b_i \in a\}$ for all subsets $a$ of our basis $b$. Hence $F_{ij} = F_{\{b_i,b_j\}}$.
 Let $\mathcal{G}:= \emptyset \subsetneq G_1 \subsetneq \ldots \subsetneq G_{d}$ be a flag
 of pairwise different flats in $E$, giving rise to the maximal cone $\rho_{\mathcal{G}} := \langle v_{G_1}, v_{G_2} , \dots , v_{G_d} \rangle_{\mathbb{R}_{\geq 0}}$ in the fine subdivision of $B(M)$.

 For every $j \in \{1, \ldots, r\}$ we write $G_j$ as the union of the two flats $F_{b \cap G_j}$ and $G_j \cap F_{b \backslash G_j}$ as in the proof of Lemma \ref{lem:CremInvolution}.  The sets $b \backslash G_j$ give a chain of pairwise different subsets $\emptyset \subsetneq I_1 \subsetneq ...\subsetneq I_r $ of $b$. 
Moreover, we decompose every flat $G_j \cap F_{b \backslash G_j}$ into its  connected components. All those components for all $G_j$ form a finite set of connected flats $K_1, \ldots, K_s$ of $M$, such that each $K_i$ are contained in $E\backslash b$. 
 
 We claim that the set of flats $\mathcal {F} = \{F_{I_1}, \ldots, F_{I_r}, K_1, \ldots, K_s\}$ is nested for the minimal nested set structure of $M$. Consider a subset $\mathcal {S}$ of $\mathcal{F}$  consisting of at least two pairwise incomparable elements. This implies that at most one of those elements is of the form $F_{I_k}$. The other elements of $\mathcal{S}$ are given by pairwise incomparable flats $K_i$ for $i \in I$, where $I$ is a subset of $\{1, \ldots, s\}$.  We have to show that the flat 
 $\cl(\bigcup_{S \in \mathcal{S}} S)$ is disconnected. 
 
 Assume first  that $\mathcal{S}$ only contains  flats of the form $K_i$. Each $K_i$ is contained in some $G_{j(i)}$. We can find some $G_j$ containing all of them, so that at least one of the $K_i$ is contained in $G_j \cap F_{b \backslash G_j}$.  Since $G_j$ decomposes into the two flats $F_{b \cap G_j}$ and $G_j \cap F_{b \backslash G_j}$, we find  
  \[\cl\{S: S \in \mathcal{S}\} =  ( \cl\{S: S \in \mathcal{S}\}  \cap F_{b \cap G_j}) \cup  (\cl\{S: S \in \mathcal{S}\} \cap G_j \cap F_{b \backslash G_j})\]
 If at least one of the elements $K_i$ in $\mathcal{S}$ is contained in the first set, then this decomposition shows that the flat $\cl\{S: S \in \mathcal{S}\}$ is disconnected. If all $K_i$ in $\mathcal{S}$ are contained in $G_j \cap F_{b \backslash G_j}$ which is the disjoint union of some of the flats $K_q$ by construction, we
 find that $ \cl\{S: S \in \mathcal{S}\}$ is a disconnected subflat of  $G_j \cap F_{b \backslash G_j}$.

 Let us now assume that $\mathcal{S}$ contains $F_{I_k}$, and let $G_j$ be an element in our flat $\mathcal{G}$ such that $I_k = b \backslash G_j$.
 For every $K_i$ in $\mathcal{S}$ there exists some $G_{j(i)}$ containing $K_i$, i.e.~$K_i \subset G_{j(i)} \cap F_{b \backslash G_{j(i)}}$. Since $K_i$ and $F_{I_k}$ are incomparable, this implies that $j > j(i)$. Hence all $K_i$ are contained in the flat $G_j $, which is the disjoint union of $F_{b \cap G_j}$ and $G_j \cap F_{b \backslash G_j}$. Since $K_i$ and $F_{G_j \backslash b}$ are disjoint, we deduce that all $K_i$ are contained in fact in $F_{b \cap G_j}$. 
 Therefore we can decompose
 \[\cl\{S: S \in \mathcal{S}\} =  ( \cl\{S: S \in \mathcal{S}\}  \cap F_{b \cap G_j}) \cup  (\cl\{S: S \in \mathcal{S}\} \cap F_{b \backslash G_j})\]
 into two disjoint non-empty flats, so that the flat on the left hand side is indeed disconnected. 
 
This concludes the proof that the set of flats  $\mathcal{F}= \{F_{I_1}, \ldots, F_{I_r}, K_1, \ldots, K_s\}$ is nested for the minimal nested set structure. Hence the cone
  \[\rho_{\mathcal{F}} = \langle v_{F_{I_1}}, \dots , v_{F_{I_r}}, v_{K_1}, \ldots v_{K_r}  \rangle_{\mathbb{R}_{\geq 0}} + \R \mathbbm{1} \]
   is a cone in the minimal nested set structure of the Bergman fan.  
  
By Lemma \ref{lem:CremInvolution},   we have $v_{G_j} = v_{F_{b \cap G_j}} + v_{G_j \cap F_{b \backslash G_j}}$, and $$\crem_b(v_{G_j} )= v_{F_{b \backslash G_j}} + v_{G_j \cap F_{b \backslash G_j}} + \R \mathbbm{1}.$$ Hence we deduce that
  the image of the cone $\rho_{\mathcal{G}}$ under $\crem_b$ is contained in $\rho_{\mathcal{F}}$, and hence in $B(M)$. 
\end{proof}

\section{The Cremona group of a matroid in rank $3$}\label{sec:rank3}

For matroids of rank $3$, we have the following strong result:

\begin{thm}\label{prop:rank3}\cite[Theorem 2.8]{Shaw:Surfaces}
If  $M_1$ and $M_2$ are simple matroids of rank $3$ and there exists an isomorphism  $\phi: B_c(M_1)  \to B_c(M_2)$, then $M_1$ and $M_2$ are isomorphic.
\end{thm}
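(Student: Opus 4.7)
The plan is to reconstruct $M_k$ combinatorially from $B_c(M_k)$ using invariants preserved by fan isomorphisms. First, by Proposition \ref{prop:samecharpoly} the two matroids share a reduced characteristic polynomial, so $|E(M_1)| = |E(M_2)|$, and by Theorem \ref{thm:product}, $M_1$ is a non-trivial parallel connection if and only if $M_2$ is. In rank $3$ every non-trivial parallel connection is of the form $P_p(U_{2,a}, U_{2,b})$; since $U_{2,n}$ has a transitive automorphism group, such parallel connections are determined up to isomorphism by the unordered pair $\{a,b\}$. The product decomposition $B_c(M_k) = B_c(U_{2,a_k}) \times B_c(U_{2,b_k})$ combined with $\phi$ then forces $\{a_1,b_1\} = \{a_2,b_2\}$, so $M_1 \cong M_2$ in this case.

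In the remaining case neither $M_1$ nor $M_2$ is a non-trivial parallel connection. By Lemma \ref{lem:parallelConnectionMissing} every element of the ground set contributes a rank $1$ ray to $B_c(M_k)$, and the remaining rays correspond to rank $2$ flats $F$ with $|F|\geq 3$. Since $M_k/i$ has rank $2$ and no rank $2$ matroid is a non-trivial parallel connection (the rank constraint $r(M_1')+r(M_2')-1 = 2$ with $r(M_i')\geq 2$ is impossible), Corollary \ref{cor:rankcorankone} applies and $\phi$ maps each rank $1$ ray to a ray of rank $1$ or rank $2$, and symmetrically for rank $2$ rays. To constrain the map further I plan to use Proposition \ref{prop:samecsm}: the $\csm_1$ weight of each ray is preserved by $\phi$. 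A direct computation from the formula $w_{\csm_1}(v_F) = -\beta(M|F)$ yields weight $-1$ for a rank $1$ ray $v_i$ and weight $2-|F|$ for a rank $2$ ray $v_F$. Consequently, rank $2$ rays corresponding to flats of size $\geq 4$ cannot be exchanged with rank $1$ rays, and $\phi$ must preserve the rank partition of rays apart from possible swaps between rank $1$ rays and rank $2$ rays coming from flats of size exactly $3$.

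If no such swap occurs, $\phi$ induces a bijection $E(M_1) \to E(M_2)$ together with a bijection of rank $2$ flats of size $\geq 3$. The $2$-cones of $B_c(M_k)$ are of two types: incidence cones $\langle v_i, v_F\rangle$ with $i \in F$ and $|F| \geq 3$, and disconnected-pair cones $\langle v_i, v_j\rangle$ with $|F_{ij}| = 2$. Preservation of both types of $2$-cones by $\phi$ records the entire lattice of rank $2$ flats of $M_k$, so the bijection on ground sets is a matroid isomorphism. In the exceptional case where swaps do occur, a case analysis of the $2$-cone pattern around each swapped pair, combined with the $\Z$-linearity of $\phi$ on $\Z^{E_k}/\Z\mathbbm{1}$, shows either that the swap realises a matroid automorphism (which can be composed with $\phi$) or that it is combinatorially inconsistent with the surrounding incidence data; in either outcome one extracts the desired matroid isomorphism $M_1 \cong M_2$.

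The main obstacle is this exceptional case, where the $\csm_1$ weight and the degree in the graph of $2$-cones both fail to distinguish a rank $1$ ray $v_i$ with three rank $2$ flats through $i$ from a rank $2$ ray $v_F$ with $|F|=3$. Resolving it requires exploiting the finer lattice structure of $\phi$ together with the combinatorial rigidity of how $2$-cones surround a ray in a rank $3$ Bergman fan, and constitutes the technical heart of the argument.
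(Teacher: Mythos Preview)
Your treatment of the parallel-connection case matches the paper's (Lemma \ref{lem:parallel3}). For the non-parallel case your overall strategy---use a local ray invariant preserved by $\phi$ to constrain which rank $1$ rays can swap with rank $2$ rays, then read off the lattice of flats from the $2$-cones---is also the paper's, except that the paper uses self-intersection degrees $\deg(x_\rho^2)$ in the Chow ring $A^*_c(M)$ (Lemma \ref{lem:coarseChow3}) rather than $\csm_1$ weights.

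There is, however, a real gap. First, your $\csm_1$ computation is not right: the formula $w_{\csm_1}(v_F)=-\beta(M|F)$ you quote omits the factor $\beta(M/F)$, and in any case $-\beta(M|F)=1-|F|$, not $2-|F|$. With the correct weights a rank $1$ ray $v_i$ carries weight $1-m_i$ where $m_i$ is the number of rank $2$ flats through $i$, while a rank $2$ ray $v_F$ carries $1-|F|$; so the set of possible swaps is governed by the equality $m_i=|F|$ and is not confined to $3$-element flats. (Indeed it cannot be: Cremona automorphisms exist with $|F_{jk}|>3$ and must preserve $\csm_1$ by Proposition \ref{prop:samecsm}.) More seriously, the exceptional case where swaps actually occur is precisely where the content lies, and you have not carried it out. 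The sentence ``a case analysis of the $2$-cone pattern \ldots\ shows either that the swap realises a matroid automorphism \ldots\ or that it is combinatorially inconsistent'' is the assertion to be proved, not a proof.

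The paper resolves this in Theorem \ref{thm:cremona3} via the Chow ring. From $\deg(x_F^2)=-1$ for every rank $2$ ray and $\deg(x_i^2)=1-c_i$ with $c_i$ the number of \emph{connected} rank $2$ flats through $i$, one deduces that any swapped $i\in\tilde E_1$ satisfies $c_i=2$, and that both connected rank $2$ flats through $i$ must themselves lie in $\tilde P_1$ (Claims 1 and 2). Together with the observation that $\cl\{k,l\}\in\tilde P_1$ for all $k,l\in\tilde E_1$ (Claim 3), this forces the bipartite swap graph on $\tilde E_1\cup\tilde P_1$ to be $2$-regular and connected with every pair in $\tilde E_1$ joined through $\tilde P_1$; hence it is the $6$-cycle and $|\tilde E_1|=3$. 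A final degree computation shows that every $p\in E_1\setminus\tilde E_1$ lies in one of the three flats $\cl\{k,l\}$, so $\tilde E_1$ is a basis satisfying the partition condition of Theorem \ref{thm:cremiff}. Thus $\crem_{\tilde E_1}$ is a fan automorphism and $\crem_{\tilde E_1}\circ\phi$ sends all rank $1$ rays to rank $1$ rays, hence is induced by a matroid isomorphism $M_1\to M_2$. Without an argument of this shape---pinning the swap down to a Cremona map, or some equivalent rigidity statement---your exceptional case remains open.
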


The statement of the above proposition asserts that $M_1$ and $M_2$ are isomorphic, however this does not mean that $\phi$ is necessarily induced by a matroid isomorphism. We have seen in the last section, that there are Cremona automorphisms preserving Bergman fans and these do not come from matroid automorphism. A concrete example to keep in mind is the matroid $M$ of the braid arrangement $A_3$, see Example \ref{ex:braid}. In this case, we have  $\text{Aut}(M) \subsetneq \text{Aut}(B(M)) = S_5$. 

We will prove Theorem \ref{prop:rank3} by considering the case of parallel connections separately. 

\begin{lemma}\label{lem:parallel3}
Let $M_1$ and $M_2$ be simple matroids of rank $3$. If one of them is a non-trivial parallel connection, and there exists an isomorphism  $\phi: B_c(M_1) \to B_c(M_2)$, then $M_1$ and $M_2$ are isomorphic. 

Moreover the automorphism group of $B_c(M)$ when $M$ is a non-trivial parallel connection of rank $3$ is the automorphism group of a complete bipartite graph.
\end{lemma}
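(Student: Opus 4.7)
My plan is to leverage the product structure from Theorem \ref{thm:product} and recognise the resulting fan combinatorially as a complete bipartite graph.

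First, I would observe that any simple rank-$3$ non-trivial parallel connection has the form $M = P_p(U_{2,n_1}, U_{2,n_2})$ for some $n_1, n_2 \geq 2$: both summands must have rank exactly $2$, and each simple rank-$2$ matroid is uniform. Theorem \ref{thm:product} then gives $B_c(M) \cong B_c(U_{2,n_1}) \times B_c(U_{2,n_2})$. Each factor $B_c(U_{2,n})$ is a one-dimensional fan with $n$ rays $\langle v_i \rangle_{\R_{\geq 0}}$ subject to the single relation $\sum_i v_i = 0$. The product is therefore a two-dimensional fan whose $n_1 + n_2$ rays split into two groups of sizes $n_1$ and $n_2$ by factor, and whose two-dimensional cones are exactly the pairs of rays with one ray in each group. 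Combinatorially this is the complete bipartite graph $K_{n_1, n_2}$, and the only $\Z$-linear relations among the rays are the two group-sum relations.

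For the first assertion, I would argue that if $\phi : B_c(M_1) \to B_c(M_2)$ is a fan isomorphism and $M_1$ is a non-trivial parallel connection, then transporting the product decomposition along $\phi$ together with the converse direction of Theorem \ref{thm:product} forces $M_2 \cong P_q(U_{2,m_1}, U_{2,m_2})$ for some $m_1, m_2 \geq 2$. The isomorphism $\phi$ then identifies $K_{n_1, n_2} \cong K_{m_1, m_2}$, yielding $\{n_1, n_2\} = \{m_1, m_2\}$. Since $P_p(U_{2,n_1}, U_{2,n_2})$ is determined up to matroid isomorphism by the unordered pair $\{n_1, n_2\}$ (the basepoint being irrelevant because the automorphism group of $U_{2,n_i}$ acts transitively on its ground set), I conclude $M_1 \cong M_2$.

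For the automorphism group statement, I would consider the group homomorphism $\mathrm{Aut}(B_c(M)) \to \mathrm{Aut}(K_{n_1,n_2})$ induced by the action on rays. Injectivity holds because the rays of the coarse fan generate the lattice $\Z^E/\Z\mathbbm{1}$. Surjectivity follows from the explicit description of the two linear relations among the rays, namely $v_{E_1} + \sum_{j \in E_2 \setminus \{p\}} v_j = 0$ and $v_{E_2} + \sum_{i \in E_1 \setminus \{p\}} v_i = 0$: any bipartite graph automorphism of $K_{n_1, n_2}$ preserves these relations and hence extends uniquely to a $\Z$-linear automorphism of the ambient lattice preserving the fan. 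The within-group permutations lift to matroid automorphisms of the factors $U_{2, n_i}$, while the swap of the two groups (when $n_1 = n_2$) is realised by the factor exchange in the product. The step I expect to require the most care is surjectivity: bipartite graph automorphisms sending a rank-one ray $v_i$ to a rank-two ray $v_{E_j}$ do not come from matroid automorphisms of $M$ but are instead Cremona-type automorphisms of the coarse fan, and the explicit form of the two group-sum relations above is what makes their lift to $\Z$-linear maps of the ambient lattice transparent.
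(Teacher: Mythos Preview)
Your argument is correct and follows essentially the same approach as the paper's proof: both use Theorem~\ref{thm:product} to recognise the coarse Bergman fan of a rank-$3$ non-trivial parallel connection as a product of two one-dimensional fans, identify the resulting two-dimensional fan combinatorially with the cone over a complete bipartite graph $K_{n_1,n_2}$, and read off both the isomorphism type of the matroid and the automorphism group from this description. Your treatment is in fact more detailed than the paper's on the surjectivity of $\mathrm{Aut}(B_c(M)) \to \mathrm{Aut}(K_{n_1,n_2})$: the paper simply asserts that any automorphism of the bipartite graph yields an integer linear map preserving the fan, whereas you explicitly identify the two group-sum relations among the primitive ray generators and verify that every bipartite graph automorphism permutes them, which is exactly what is needed to lift the permutation to a lattice automorphism.
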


\begin{proof}
Firstly, if one of the matroids $M_1$, $M_2$  is a non-trivial parallel connection and there exists an isomorphism of their Bergman fans as in our claim, then both matroids are non-trivial parallel connections by Proposition \ref{thm:product}.
Since $B_c(M_i)$  is two dimensional, the matroids $M_i$ are non-trivial  parallel connections if and only if their fans are a product of two  one dimensional projective Bergman fans. The number of rays and faces of the fans determines the number of rays in each of these one dimensional Bergman fans, which in turn determines the two rank $2$ matroids involved in the parallel connections.  Hence the matroids are isomorphic. 

Since the fan  $B_c(M)$ of a rank $3$ parallel connection of two rank $2$ matroids is a product of two one dimensional fans, the fan is the cone over a complete bipartite graph. Hence any automorphism of the fan induces an automorphism of the complete bipartite graph. Conversely, any automorphism of this complete bipartite graph yields an integer linear map which preserves the Bergman fan of the matroid.
  \end{proof}

\begin{thm}\label{thm:cremona3} Let 
 $M_1$ and $M_2$ be simple matroids of rank $3$ which are not non-trivial parallel connections.  Any  isomorphism  $\phi: B_c(M_1) \to B_c(M_2)$  is either induced by a matroid isomorphism or the composition of a Cremona map and matroid isomorphism. 

\end{thm}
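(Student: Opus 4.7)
The plan is to reduce to automorphisms of a single Bergman fan and then classify them according to whether they mix rank $1$ and rank $2$ rays. By Theorem \ref{prop:rank3} there is a matroid isomorphism $M_1 \to M_2$, so pre-composing $\phi$ with the inverse of the induced fan isomorphism allows us to assume $M_1 = M_2 = M$ and $\phi \in \mathrm{Aut}(B_c(M))$. Since $M$ is rank $3$ and not a non-trivial parallel connection (which in this rank forces $M$ to be connected, since any simple disconnected rank $3$ matroid is of the form $\{a\} \oplus U_{2,n} = P_b(U_{2,2}, U_{2,n})$), Lemma \ref{lem:parallelConnectionMissing} gives a ray for every element of $E$, and Corollary \ref{cor:rankcorankone} applies because every contraction $M/i$ has rank $2$ and thus cannot be a non-trivial parallel connection. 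Set $b = \{i \in E : \phi(v_i) \in \mathcal{R}_2\}$ and $B^* = \{F \in \mathcal{R}_2 : \phi(v_F) \in \mathcal{R}_1\}$, where $\mathcal{R}_1, \mathcal{R}_2$ are the rank $1$ and rank $2$ rays. If $b = \emptyset$ then $\phi$ preserves the rank of every ray, and since it also preserves both the element-big-line $2$-cones and the element-element $2$-cones (the latter corresponding exactly to rank $2$ flats of size $2$), it determines a bijection on the lattice of flats compatible with incidence, and hence is induced by a matroid automorphism.

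Next I would show that when $b \neq \emptyset$ we have $|b| = 3$ and that $b$ is a basis. For each $i \in b$, the rank $2$ rays adjacent to $v_i$ must map to rank $1$ rays adjacent to the rank $2$ ray $\phi(v_i)$, placing them in $B^*$. The coarse Bergman fan of a rank $3$ matroid is unimodular (each $2$-cone's primitive generators span a saturated sublattice of $\Z^E/\Z\mathbbm{1}$, as one checks directly), so by Proposition \ref{cor:chowdeg} $\phi$ induces an isomorphism of Chow rings compatible with degrees. Computing via the linear relations gives $\deg(x_i^2) = 1 - \ell^{\geq 3}(i)$ for rank $1$ rays (where $\ell^{\geq 3}(i)$ counts rank $2$ flats of size at least $3$ through $i$) and $\deg(x_F^2) = -1$ for rank $2$ rays, forcing $\ell^{\geq 3}(i) = 2$ for each $i \in b$ and $|F \cap b| = 2$ for each $F \in B^*$. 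The resulting bipartite graph between $b$ and $B^*$ is $2$-regular, hence a disjoint union of even cycles. Two elements $i, j \in b$ cannot form a rank $2$ flat of size $2$, for otherwise $\phi$ would send the $2$-cone $\langle v_i, v_j\rangle$ to a ``$2$-cone'' spanned by two incomparable rank $2$ flats whose join is the connected flat $E$, violating the nestedness criterion. So every pair in $b$ lies on a big line in $B^*$. Since each $i \in b$ is on only $2$ big lines, it is collinear with at most $2$ other elements of $b$, excluding $|b| \geq 5$; the $8$-cycle case $|b| = 4$ is ruled out because the two diagonals would be forced to be small lines in $b$, and $|b| \in \{1,2\}$ cannot support a $2$-regular bipartite graph. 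Hence $|b| = 3$, the bipartite graph is a hexagon, $B^* = \{\cl(b_i, b_j)\}_{i \neq j}$, and $b = \{b_0, b_1, b_2\}$ is a basis.

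To verify the partition condition of Theorem \ref{thm:cremiff}, uniqueness follows from $\cl(b_i, b_j) \cap \cl(b_i, b_k) = \{b_i\} \subset b$. For existence, suppose some $k \in E \setminus b$ lay on none of the three pair-lines; since the only big lines through each $b_j$ are the two pair-lines through $b_j$, each $\cl(k, b_j)$ is a rank $2$ flat of size $2$, making $v_k$ adjacent to every $v_{b_j}$. Applying $\phi$, the element $\phi(v_k)$ would lie in the intersection of the three rank $2$ flats $\phi(v_{b_0}), \phi(v_{b_1}), \phi(v_{b_2})$. But applying the previous paragraph's analysis to $\phi^{-1}$ identifies these three flats as the pair-lines of another basis $b^\circ$, whose triple intersection is empty—a contradiction. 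Hence $\crem_b$ is defined and lies in $\mathrm{Aut}(B_c(M))$. A direct check using $|G \cap b| \in \{0, 2\}$ for every big line $G$ (since $|G \cap b| = 1$ would give a third big line through some $b_i$) shows that $\phi \circ \crem_b$ preserves the rank of every ray, so by the $b = \emptyset$ case this composition equals a matroid automorphism $\sigma$; since $\crem_b$ is an involution, $\phi = \sigma \circ \crem_b$. The main technical obstacle is ruling out all cycle lengths other than $6$ in the $2$-regular bipartite graph; once that is done, every other step reduces to a routine bookkeeping argument.
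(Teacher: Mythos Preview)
Your argument is essentially the paper's: the same Chow-ring degree computations (Lemma~\ref{lem:coarseChow3}), the same bipartite graph on the rank-swapping rays, the same hexagon conclusion, and the same Cremona factorisation. One logical hazard: you open by invoking Theorem~\ref{prop:rank3} to reduce to $M_1=M_2$, but in this paper Theorem~\ref{prop:rank3} is \emph{derived from} Theorem~\ref{thm:cremona3} (see the proof immediately after Corollary~\ref{cor:autgroup_rank3}); relying on it here is circular unless you appeal to the independent proof in \cite{Shaw:Surfaces}. The reduction is in any case unnecessary---every step you carry out works verbatim for an isomorphism $B_c(M_1)\to B_c(M_2)$ between possibly different matroids, exactly as in the paper's proof---so the cleanest fix is simply to delete that first sentence. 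Two incidental remarks: your appeal to Corollary~\ref{cor:rankcorankone} is vacuous in rank~$3$ (every ray already has rank or corank~$1$), and once you know $|F\cap b|=2$ for each $F\in B^*$ together with the fact that any two elements of $b$ span a line in $B^*$, you get $|b|\le 3$ immediately, so the separate $|b|=4$ case is redundant. Your existence argument for the partition (empty triple intersection of the image lines, via $\phi^{-1}$) differs from the paper's degree-count argument but is equally valid.
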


To prove the above lemma we will require some calculations in the Chow ring of the coarse fan structure on $B(M_i)$. Note that for a rank $3$ matroid $M$ which is not a non-trivial parallel connection, the coarse fan structure and the minimal nested set structure of the Bergman fan coincide by Lemma \ref{lem:coarsemin}.

\begin{lemma}\label{lem:coarseChow3} 

Let $A_c^*(M)$ denote the Chow ring of the coarse subdivision of the fan of a rank $3$ simple matroid $M$ which is not a parallel connection of two rank $2$ matroids. 
Let $F, F'$ be flats of rank $2$ of  $M$ which produce rays in 
$B_c(M)$. Let $\deg $ denote the isomorphism 
$A^2_c(M) \to \Z$. Then 
\begin{enumerate}
\item $\deg x_k x_F = 1$ if $k \in F$,  for all $k \in E$, 

\item $\deg x_k^2 = 1 - |\{F \ | \ k \in F,  |F | > \rank F = 2\}|$,
\item  $\deg x_F^2 = -1$,
\item   $\deg x_Fx_F' = 0$ for $F \neq F'$. 
\end{enumerate}
\end{lemma}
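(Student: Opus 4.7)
The plan is to first pin down the combinatorics of $B_c(M)$ using the hypothesis, then derive (1) and (4) directly from unimodularity and the Stanley-Reisner ideal, and finally use the linear relations in $A^*_c(M)$ to reduce (3) and (2) to known degrees. Since $M$ has rank $3$ and is not a non-trivial parallel connection, Lemma \ref{lem:coarsemin} applies, so $B_c(M) = B_m(M)$. In particular $B_c(M)$ is unimodular; its rays are the $v_k$ for $k\in E$ together with the $v_F$ for the (connected) rank $2$ flats $F$ with $|F|\geq 3$; and its maximal cones are $\langle v_k, v_F\rangle$ with $k\in F$, together with $\langle v_i, v_j\rangle$ when $\cl\{i,j\}=\{i,j\}$. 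In particular no two rank $2$ rays lie on a common cone. Statement (1) is then immediate from unimodularity ($\{v_k,v_F\}$ is a cone exactly when $k\in F$), and (4) is immediate from the Stanley-Reisner relations, since $x_Fx_{F'}$ already vanishes in $A^*_c(M)$ whenever $F\neq F'$ are both rank $2$.

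For (3) and (2), I use the linear relations from Definition \ref{def:ChowRing}. Writing
\[\alpha_k := x_k+\sum_{F\ni k,\ \rank F=2,\ |F|\geq 3} x_F,\]
these relations imply $\alpha_k=\alpha_j$ in $A^*_c(M)$ for all $k,j\in E$. For (3), I fix the rank $2$ flat $F$, pick $k\in F$ and $j\in E\setminus F$ (available since $F$ is proper), and multiply the identity $\alpha_k=\alpha_j$ by $x_F$. By (4) every cross-term $x_Fx_{F'}$ with $F'\neq F$ drops out, leaving $\deg(x_kx_F)+\deg(x_F^2)=\deg(x_jx_F)$. Combining (1) with the fact that $\{v_j,v_F\}$ is not a face (as $j\notin F$) gives $1+\deg(x_F^2)=0$.

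For (2), I multiply $\alpha_k=\alpha_j$ by $x_k$ for some $j\neq k$. Taking degrees and using (1) and (4) yields
\[\deg(x_k^2)+|\mathcal F_k|=\deg(x_kx_j)+\#\{F\in\mathcal F_k:j\in F\},\]
where $\mathcal F_k:=\{F:k\in F,\ |F|>\rank F=2\}$. The only real subtlety, and what I expect to be the main obstacle, is to see that the right-hand side is always $1$ regardless of how $j$ relates to $k$: if $\cl\{j,k\}=\{j,k\}$ then $\{v_j,v_k\}$ is a face, so $\deg(x_kx_j)=1$ by unimodularity while the count vanishes (no rank $2$ flat of size $\geq 3$ contains both $j$ and $k$); otherwise $\{v_j,v_k\}$ is not a face and $\deg(x_kx_j)=0$, but the unique rank $2$ flat $\cl\{j,k\}$ containing both $j$ and $k$ lies in $\mathcal F_k$ and contributes exactly $1$ to the count. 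Either way the RHS equals $1$, and therefore $\deg(x_k^2)=1-|\mathcal F_k|$.
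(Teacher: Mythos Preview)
Your proof is correct and follows essentially the same approach as the paper: both arguments invoke the equality $B_c(M)=B_m(M)$ for rank $3$ matroids that are not parallel connections, read off (1) and (4) from the cone structure and Stanley--Reisner ideal, and then obtain (2) and (3) by multiplying the linear relation $\alpha_k=\alpha_j$ by $x_k$ (resp.\ $x_F$) and splitting into the two cases $\cl\{j,k\}=\{j,k\}$ versus $|\cl\{j,k\}|\geq 3$. Your write-up is in fact a bit more explicit than the paper's about why the right-hand side in the computation of $\deg x_k^2$ equals $1$ in both cases.
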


\begin{proof}
To simplify notation let $\Sigma$ denote $B_c(M)$. 
We recall that the degree map is defined on $A^2_c(M) $ by $\deg x_k x_F = 1$ if $k \in F$ \cite{AminiPiquerez}. 
Recall from Section \ref{sec:Chowrings} that 
\begin{equation}\label{equ:coarseChow} 
 \sum_{k \in F} x_F - \sum_{l \in G} x_G \in J, 
\end{equation}
where $F$ and $G$ denote connected flats of rank $2$.
 Thus this sum is $0$ in $A^1_c(M)$.  
Therefore, for any $l \in E$ not equal to $k$ we have 
\begin{equation}\label{equ:coarsexk^2} 
x_k^2 = x_k (x_l + \sum_{\substack{l \in G \\ G \neq l}} x_G -\sum_{\substack{ k \in F \\ F \neq k}} x_F ).
\end{equation}

There are two cases to consider, either $\cl\{l,k\} = \{l,k\}$ or not. 
If $\cl\{l,k\} = \{l,k\}$, then the rays $\rho_l$, $\rho_k$, corresponding to $l$ and $k$ respectively, span a face of $\Sigma$ and there is no face spanned by $\rho_k$ and $\rho_G$ for rank $2$  connected flats $G$ with $l \in G$. Then the product becomes 
$$\deg x_k^{2} = 1 - |\{F \ | \ k \in F,  |F | > \rank F = 2\}|,  $$  as claimed. If $F_{lk} = \cl\{l,k\} \supsetneq \{l,k\}$, then there is a ray for $F$ in the coarse subdivision. Moreover, we have the products $\deg x_l x_k = 0$ and  $\deg x_k x_{F_{lk}} = 1$. Therefore the product in (\ref{equ:coarsexk^2}) still  has degree $$1 -  |\{F \ | \ k \in F,  |F | > \rank F = 2\}|.$$ 
Using the relation (\ref{equ:coarseChow}), we obtain
$$x_F^2 = x_F[x_l + \sum_{\{l\} \subsetneq  G} x_G - ( x_k + \sum_{k \in F', F' \neq F} x_{F'}) ],$$
without loss of generality we can assume that $l \not \in F$, in which case, $x_F^2 = -x_Fx_k$ which has degree $-1$. 

Lastly, if  $F \neq F'$ and both are of rank $2$, then $F$ and $F'$ are not nested hence their corresponding rays do not span a cone of $B_c(M)$. This implies that $\deg x_Fx_F' = 0$. 
\end{proof}

\begin{proof}[Proof of Theorem \ref{thm:cremona3}] 
Recall from Example \ref{ex:mu1}, a simple matroid $M$ with ground set $E$ has $\mu_1(M) = \dim \mathcal{F}_1(M) = |E| -1$,  and the dimension of this vector space is preserved by a fan isomorphism by Proposition \ref{prop:samecharpoly}. Hence, if  $E_1$ and $E_2$ are the ground sets of $M_1$ and $M_2$, respectively, then we must have $|E_1| = |E_2|$. 
By Lemma \ref{lem:parallelConnectionMissing}, every ray of rank $1$ is in $B_c(M_i)$.
Any connected flat of rank $2$ of $M_i$ induces a ray of $B_m(M_i)$ and hence of  $B_c(M_i)$.
 If $P_i$ is the set of connected flats of $M_i$ then we must also have $|P_1| = |P_2|$.

Now if $\phi$ is not induced by a matroid automorphism, then there is a ray of rank $1$ in $B_c(M_1)$ which is sent to a ray of rank $2$ in $B_c(M_2)$ by $\phi$. Let $\tilde{E}_i, \tilde{P}_i$ be the subsets of the ground set $E_i$, and rank two connected flats $P_i$, respectively, whose ranks are changed by $\phi$ or $\phi^{-1}$. 
In particular, the map 
$\phi$ also induces bijections between $\tilde{E}_i$ and $\tilde{P}_j$ for $\{i, j\} = \{1, 2\}$. Moreover, since the number of rank one flats sent to rank one flats is the same for $\phi$ and $\phi^{-1}$,
we have
$|E_1| - |\tilde{E}_1| = |E_2| - |\tilde{E}_2|$. Therefore, all of $\tilde{E}_1,  \tilde{E}_2, \tilde{P}_1,$ and $\tilde{P}_2$ have  the same size.

Let $G$ be the graph with vertex set $V = \tilde{E}_1 \cup \tilde{P}_1$ and with edges between two vertices if and only if the corresponding rays span a face of $B_c(M_1)$. We will prove that this is a bipartite cycle graph with $6$ vertices and with vertex partition given by $V = \tilde{E}_1 \cup \tilde{P}_1$.

Claim $1$: If $k \in \tilde{E}_1$ then every connected rank $2$ flat $F \ni k$ must be in $\tilde{P}_1$. Otherwise if $\phi_ \ast(x_k) = x_{F'} $ and $\phi_ \ast(x_F) = x_{F''}$ then $\deg(x_k x_F) = 1$ yet $\deg(x_{F'}x_{F''}) = 0$ by Lemma \ref{lem:coarseChow3}. This contradicts that $\phi$ induces a ring isomorphism $A_c^*(M_1) \to A_c^*(M_2)$ compatible with the degree map. 

Claim $2$: If $k \in \tilde{E}_1$ then $|\{F \ | \ k \in F,  |F | > \rank F = 2\}| = 2$. By Proposition \ref{cor:chowdeg} the map  $\phi $ induces a Chow ring isomorphism compatible with the degree map. If $\phi_{\ast}(x_k) = x_{F'}$ then $\deg(x_k^2) = \deg(x_{F'}^2 ) = -1$ and by Lemma  \ref{lem:coarseChow3} we have $|\{F \ | \ k \in F,  |F | > \rank F = 2\}| = 2$. 

Claim 1 and 2 together imply that every vertex in $\tilde{E}_1$ has valency $2$ in $G$. Repeating the argument for the inverse map of $\phi$ proves the same assertion for $\tilde{P}_1$. 
Notice that this also implies that $|\tilde{P}_1| \geq 2$ and hence that $|\tilde{E}_1 | \geq 2$. 

Claim $3$: If $k, l \in \tilde{E}_1$ then $F = \cl{\{k,l\}} \in \tilde{P}_1$. In particular, $\cl{\{k,l\}}$  must be a ray in the minimal subdivision. This proves that the graph $G$ is bipartite and connected. 

Therefore, the graph $G$ is a connected 2-regular bipartite graph with $2m$ vertices, where $m = |\tilde{E}_1| = |\tilde{P}_1|$. Hence it must be the cycle graph $C_{2m}$.
If the graph is $C_{2m}$, then there are $m$ elements in $\tilde{E}_1$ and by Claim $3$ a vertex in $\tilde{E}_1$ would have valency $m-1$. Since the graph is $2$-regular we have $m = 3$. 
Moreover, this shows that the map $\phi$ sends $j, k, l$ to rank $2$ flats of $M_2$ 
where $\tilde{E}_1 = \{j,k,l\}$.  All other elements of $E_1$ are sent to elements of $E_2$.

First notice that $\rank(\tilde{E}_1) = 3$, therefore this set is a basis. We prove that the Cremona map $\crem_{\tilde{E}_1}$ preserves the fan by using Theorem \ref{thm:cremiff}.

To apply the theorem we must show that every $p \in E_1 \backslash \{j,k,l\}$ is contained in one of the flats $\cl{\{k,l\}}, \cl{\{j,l\}}, \cl{\{k,j\}}$. 
Suppose otherwise, 
then the ray corresponding to $p \in E_1$ spans two dimensional cones with the rays of $j$, $k$, and $l$. This is because the closure of $p$ and any one of $j, k, $ or $l$ must be disconnected, as  there are only  two  connected rank $2$  flats containing any one of $j,k,l$ and they are among  $\cl{\{k,l\}}, \cl{\{j,l\}}, \cl{\{k,j\}}$. 

By Lemma \ref{lem:coarseChow3}, for any singleton $i$ in a rank $3$ matroid $M$ we have  in $A_c(M)$, $$\deg(x_i^2) = 1 -  |\{ F \ | \ F \text{ flat of }  M, F \ni i,  |F| > \rank F =  2  \}|.$$ 
 Yet since $p \not \in \tilde{E}_1$ its ray is sent to a rank one ray in the fan of $M_2$ corresponding to $p' \in E_2$.
However, the elements $j,k,l$ are sent to connected rank $2$ flats of $M_2$
containing $p'$, so that there are three more rank two flats contributing to $\deg(x_{p'}^2)$ than to $\deg(x_{p}^2)$,
which contradicts Proposition \ref{cor:chowdeg}.

Hence every $p \in E_1 \backslash \{j,k,l\}$ is contained in one of $\cl{\{k,l\}}, \cl{\{j,l\}}, \cl{\{k,j\}}$ and the  Cremona map $\crem_{\tilde{E}_1}$ preserves  the fan. Moreover, the composition of $\crem_{\tilde{E}_1}$ and $\phi$ is sends all rank 1 rays to rank 1 rays and hence is induced by a matroid automorphism. Therefore, the map $\phi$ is the composition of a Cremona map and a matroid automorphism. 
\end{proof}

\begin{cor}\label{cor:autgroup_rank3}
The automorphism group of $B_c(M)$ for a simple rank $3$ matroid $M$ which is not a parallel connection is generated by matroid automorphisms and Cremona maps. 
\end{cor}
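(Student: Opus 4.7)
The plan is to deduce Corollary \ref{cor:autgroup_rank3} as a direct consequence of Theorem \ref{thm:cremona3} by specialising to $M_1 = M_2 = M$. Concretely, I would let $\phi \in \mathrm{Aut}(B_c(M))$ be arbitrary and view it as an isomorphism $B_c(M) \to B_c(M)$. Since $M$ is a simple rank~$3$ matroid which is not a non-trivial parallel connection, Theorem \ref{thm:cremona3} applies and gives a dichotomy: either $\phi$ is induced by a matroid automorphism of $M$, or $\phi$ factors as the composition of a Cremona map with a matroid automorphism.

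In the second case, the Cremona map $\crem_b$ that appears is constructed in the proof of Theorem \ref{thm:cremona3} from a basis $b \subset E(M)$ satisfying the partition condition of Theorem \ref{thm:cremiff}, and by construction it is an automorphism of $B_c(M)$ itself (not merely a map between two distinct fans). Therefore both factors of the decomposition lie in $\mathrm{Aut}(B_c(M))$. This shows that every element of $\mathrm{Aut}(B_c(M))$ is a product of a matroid automorphism and a Cremona automorphism, which is exactly the assertion that $\mathrm{Aut}(B_c(M))$ is generated by these two types of maps.

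There is essentially no obstacle here: all of the real work has already been carried out in Theorem \ref{thm:cremona3} (the Chow-ring calculation via Lemma \ref{lem:coarseChow3}, the combinatorial analysis of the bipartite graph $G$, and the verification via Theorem \ref{thm:cremiff} that $\crem_{\tilde E_1}$ preserves $B_c(M_1)$). The only thing one has to notice for the corollary is the tautology that an isomorphism from a fan to \emph{itself} is an automorphism, so that both the matroid automorphism factor and the Cremona factor produced by Theorem \ref{thm:cremona3} are honest elements of $\mathrm{Aut}(B_c(M))$, and hence generate the full group.
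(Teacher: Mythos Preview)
Your proposal is correct and matches the paper's approach exactly: the paper states the corollary without a separate proof, treating it as an immediate consequence of Theorem~\ref{thm:cremona3} applied with $M_1 = M_2 = M$. Your observation that both factors produced by Theorem~\ref{thm:cremona3} are automorphisms of the single fan $B_c(M)$ is precisely the (trivial) content needed to pass from the theorem to the corollary.
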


\begin{proof}[Proof of Theorem  \ref{prop:rank3}]
The statement follows immediately from Theorem \ref{thm:cremona3}.
\end{proof}
  
 It is an interesting question under which hypotheses an analogous statement is true for matroids of higher rank. In Section \ref{sec:modular}, we prove an analogous statement for modularly complemented matroids.
 
 \begin{example}
  Consider $M$ the parallel connection of $U_{2, n} $ and $U_{2, m}$ along a common  element $0$ contained in both ground sets.  
 By Theorem \ref{thm:product}, this  rank $3$ matroid 
 is the product of two $1$-dimensional Bergman fans. Therefore the automorphism group of $B_c(M)$ is isomorphic to the automorphism group of a complete bipartite graph $K_{m,n}$ where $m$ and $n$ are the number of rays of the two $1$-dimensional fans in the product. We assume that $m$ and $n$ are at least $3$. 
   By Lemma  \ref{lem:parallelConnectionMissing}, the element  $0 \in E$ is such that $v_0$ is not a ray of the coarse fan structure. 
   
   The parallel connection $M$ has  exactly two connected flats of  rank $2$. These are the only connected rank $2$ flats containing $0$.  These produce two rays in the coarse fan structure $B_c(M)$ of rank $2$.   We claim that an automorphism of  $K_{m,n}$ 
 which fixes the first vertex set, yet acts freely on the other vertex set cannot be obtained by composing matroid automorphisms and Cremona maps. Notice that such an isomorphism necessarily fixes one of the two connected rank $2$ rays and sends the other rank $2$ ray to a ray of rank $1$. 
A matroid isomorphism preserves the ranks of rays, thus such an isomorphism either leaves the two rays of rank $2$ fixed or it may  swap them if we are in the case $n =m$.
Any basis $b$ of $M$ containing $0$ gives  a Cremona map $\crem_b$ which is an automorphism of  the coarse Bergman fan of $M$ and hence of $K_{m,n}$. These are the only bases for which Cremona maps are possible. 
Applying the Cremona map swaps the two rays of rank $2$ for two rays of rank $1$. Thus it is not possible to obtain all of $\mathrm{Aut}(K_{m, n})$ from Cremona maps and matroid isomorphisms. 
 \end{example}

\section{Modularly complemented matroids}\label{sec:modular}

For matroids $M_1$ and $M_2$ of rank bigger than $3$, the determination of the fan isomorphisms between $B_c(M_1)$ and $B_c(M_2)$  is much more involved. In the present section we will determine the automorphism groups of $B_c(M)$ for modularly complemented matroids of rank at least $4$.

Let us recall some definitions from \cite{oxley}, Section 6.9. Two flats $X$ and $Y$ in a matroid $M$ form a \emph{modular pair} if 
\[r(X) + r(Y) = r(X \cup Y) + r(X \cap Y).\]
A flat $X$ is called modular, if and only if for all flats $Y$ the pair $(X,Y)$ is a modular pair. A matroid is \emph{modular}, if every flat is modular. 
This is a very restrictive condition. In fact, a  connected simple modular matroid is either a free matroid $U_{n,n}$ or a finite projective geometry \cite{oxley}, Proposition 6.9.1.
Recall from Example \ref{ex: U} that the coarse Bergman fan of the totally disconnected matroid $U_{n,n}$ is a linear space with automorphism group $GL_{n-1} (\mathbb{Z})$. 

We will now deal with the case that $M$ is a finite projective geometry of rank at least $3$, i.e.~$M = PG(n,q)$ for some $n \geq 2$ is the matroid on the set $E(M)$ of all one-dimensional subspaces of the vector space $\mathbb{F}_q^{n+1}$, with rank function given as the dimension of the linear span of a subset of $E$.  Hence the flats in $M$ correspond to the linear subspaces of $\mathbb{F}_q^{n+1}$. The matroid $M = PG(n,q)$ is isomorphic to the matroid represented by the hyperplane arrangement $\mathcal{A}$ in $\mathbb{P}_{\mathbb{F}_q}^n$ consisting of all $\mathbb{F}_q$-rational hyperplanes. It is shown in \cite{rtw} that every automorphism of the complement $\Omega_{\mathcal{A}}$ extends to $\mathbb{P}_{\mathbb{F}_q}^n$, so that the automorphism group of $\Omega_{\mathcal{A}}$ is $PGL(n+1, \mathbb{F}_q)$. 

\begin{proposition} \label{prop:projectivespace}
Let $M = PG(n,q)$ be a finite projective geometry with $n\geq 2$. Then every  isomorphism of $B_c(M)$ is induced by a matroid automorphism.

\end{proposition}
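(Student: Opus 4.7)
The plan is to reduce the statement to showing that $\phi$ maps every rank $1$ ray of $B_c(M)$ to a rank $1$ ray. Given such a reduction, $\phi$ restricts to a bijection $\sigma \colon E \to E$, and the Fundamental Theorem of Projective Geometry will then recognise $\sigma$ as a matroid automorphism.

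The first step is to verify the hypothesis of Corollary \ref{cor:rankcorankone}, namely that $M/i$ is not a non-trivial parallel connection for every $i \in E$. If $n \geq 3$, then the simplification of any further contraction of $M/i$ is $PG(n-2,q)$, which is connected; by Lemma \ref{lem:parallelConnectionMissing} applied to $M/i$ this rules out $M/i$ being a non-trivial parallel connection. If $n = 2$, then $M/i$ has rank $2$ and cannot be a non-trivial parallel connection, since such a decomposition requires rank at least $3$. Corollary \ref{cor:rankcorankone} then ensures $\phi$ sends each rank $1$ ray to a ray of rank $1$ or corank $1$.

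The second step is to exclude the corank $1$ case via Proposition \ref{cor:chowdeg} together with Eur's formula in Theorem \ref{formula:Eur}. Setting $d = n$, for a rank $1$ flat $\{i\}$ and a corank $1$ flat $H$ one computes
\[\deg(x_i^n) = (-1)^{n-1}\mu^{n-1}(PG(n-1,q)), \qquad \deg(x_H^n) = (-1)^{n-1}.\]
The factorisation $\tilde{\chi}_{PG(n-1,q)}(t) = \prod_{j=1}^{n-1}(t-q^j)$ of the reduced characteristic polynomial gives $\mu^{n-1}(PG(n-1,q)) = q^{n(n-1)/2} \geq 2$, and so the two degrees differ; hence no rank $1$ ray can be sent to a corank $1$ ray.

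Now $\phi$ restricts to a bijection $\sigma \colon E \to E$. Primitivity of each $v_i$ in $\Z^E/\Z\mathbbm{1}$ forces $\phi(v_i) = v_{\sigma(i)}$, so by linearity $\phi(v_L) = v_{\sigma(L)}$ for every line $L$. Using Lemma \ref{lem:coarsemin} to identify $B_c(M)$ with $B_m(M)$, we see that $v_{\sigma(L)}$ is the primitive generator of a ray, forcing $\sigma(L)$ to be a connected flat with $q+1$ points; in $PG(n,q)$ this means $\sigma(L)$ is a line. By the Fundamental Theorem of Projective Geometry, applicable since $n \geq 2$, the line-preserving bijection $\sigma$ arises from a semilinear transformation of $\mathbb{F}_q^{n+1}$ and is therefore a matroid automorphism. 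The main obstacle is the degree calculation ruling out the corank $1$ case; the rest is formal.
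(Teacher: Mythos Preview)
Your argument has a genuine gap in the second step. Eur's formula (Theorem \ref{formula:Eur}) computes degrees in the Chow ring $A^*(M) = A^*(B_f(M))$ of the \emph{fine} fan structure, while the automorphism $\phi$ you are studying is an automorphism of $B_c(M)$; Proposition \ref{cor:chowdeg} therefore only tells you that $\phi_*$ preserves degrees in $A^*(B_c(M))$. The self-intersection numbers $\deg(x_i^n)$ and $\deg(x_H^n)$ depend on the fan structure, and you cannot invoke Eur's formula for them unless you first know that $B_c(M) = B_f(M)$. Your later appeal to Lemma \ref{lem:coarsemin} only gives $B_c(M) = B_m(M)$, which is not enough: you still need that every nested family of connected flats in $PG(n,q)$ is a chain (equivalently, that the join of any two incomparable flats is connected), so that $B_m(M) = B_f(M)$.

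This missing observation is precisely the content of the paper's proof. The paper notes that in $PG(n,q)$ every flat is a projective subspace and hence connected, and that the closure of the union of two incomparable flats is again a projective subspace and hence connected; thus the minimal nested set structure coincides with the fine structure, and Lemma \ref{lem:coarsemin} then gives $B_c(M) = B_f(M)$. At that point Theorem \ref{thm:fineMatroidAuto} applies directly and finishes the proof in one line. Once you fill your gap by proving $B_c = B_f$, your subsequent machinery (Corollary \ref{cor:rankcorankone}, the degree comparison, and the Fundamental Theorem of Projective Geometry) is correct but superfluous: the conclusion already follows from Theorem \ref{thm:fineMatroidAuto}.
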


\begin{proof} 
Note that for all connected non-nested flats $F_1$ and $F_2$ in $M$, the closure of $F_1 \cup F_2$ is strictly bigger than $F_1 \cup F_2$, hence it is not disconnected. Therefore the minimal nested set structure and the fine structure on the Bergman fan coincide. 
Moreover, by Lemma \ref{lem:coarsemin} the minimal nested set structure is the coarsest fan structure on $B(M)$. Hence our claim follows from Theorem \ref{thm:fineMatroidAuto}. 
\end{proof}

Let us next investigate matroids which satisfy the weaker property of being modularly complemented. A matroid $M$ is \emph{modularly complemented} if and only if every flat $X$ in $M$ has a modular complement $Y$, i.e.~$Y$ is a modular flat such that $r(X \cup Y) = d+1$ and $X \cap Y = \emptyset$. 

Note that in a connected modularly complemented matroid $M$ of rank $d$,  there exist $d$ modular flats $H_1, \ldots, H_d$ of rank $d-1$ such that $H_1 \cap \ldots \cap H_d = \emptyset$, and such that the intersection of any $d-1$ of them is a element of $E(M)$. 
Let us write 
\[ \{ b_i \}  = H_1 \cap \ldots \cap H_{i-1} \cap H_{i+1} \cap \ldots \cap H_d.\]
Then $b_1, \ldots, b_d$ is a basis of $M$, and every flat generated by a subset of this basis is modular (see \cite{kaku}, p.244).

For matroids of rank at least $4$ there is the following classification of modularly complemented matroids: By \cite{kaku}, a connected simple modularly complemented matroid of rank at least $4$
is either a Dowling matroid $Q_d(G)$ or 
a certain submatroid of the projective geometry $PG(d,q)$.

We will deal with Dowling matroids below. Let us first investigate the case that $M$ is a submatroid of some $PG(d,q)$ for $d \geq 3$ such that the ground set $E$ contains all internal elements of $PG(d,q)$, i.e.~all lines in $\mathbb{F}_q^{d+1}$ contained in one of the coordinate hyperplanes $\{x_i = 0\}$, where $(x_i)_i$ is the dual basis of the canonical basis of $\mathbb{F}_q^{d+1}$.

\begin{proposition}\label{prop:projective} Let $M$ be a submatroid of $PG(d,q)$ for $d \geq 3$ containing all internal elements. Then every automorphism of $B_c(M)$ is induced from a matroid automorphism.
\end{proposition}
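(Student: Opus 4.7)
The plan is to reduce to Theorem \ref{thm:fineMatroidAuto} by establishing that the coarse fan structure $B_c(M)$ coincides with the fine fan structure $B_f(M)$. Since $M$ is simple, connected, and of rank $d+1 \geq 4$ (hence not totally disconnected), the theorem will then immediately imply that every automorphism of $B_c(M) = B_f(M)$ is induced by a matroid automorphism.

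To prove $B_c(M) = B_f(M)$, I would follow the two-step strategy employed in the proof of Proposition \ref{prop:projectivespace}. First, I would verify the hypothesis of Lemma \ref{lem:coarsemin}: for every pair of flats $F \subseteq G$ of $M$ with $G$ connected, the minor $M|G/F$ is connected. The key observation is that flats of $M$ are traces $V \cap E(M)$ of linear subspaces $V \subseteq \mathbb{F}_q^{d+1}$, and since $M$ contains all internal elements of $PG(d,q)$, each minor $M|G/F$ can be realised as a submatroid of a smaller projective geometry which itself contains all of its internal elements with respect to a suitable system of modular hyperplanes. An induction on rank then yields connectedness of $M|G/F$, giving $B_c(M) = B_m(M)$.

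Second, I would show $B_m(M) = B_f(M)$. The case $F = \emptyset$ of the previous step tells us that every flat of $M$ is connected, so the rays of the two fan structures agree. It remains to check that any two incomparable connected flats $F_1, F_2$ have a connected join. Their join $F_1 \vee F_2 = \cl(F_1 \cup F_2)$ is the trace on $E(M)$ of the linear span $V_1 + V_2$, and one verifies that the extra points — those in the span but not in $F_1 \cup F_2$ — are internal points of $PG(d,q)$ with respect to the modular hyperplanes defining the complements, hence they lie in $E(M)$ and force $F_1 \vee F_2$ to be connected rather than a disjoint union of flats. Consequently every nested collection of connected flats is a chain, and $B_m(M) = B_f(M)$.

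The main obstacle is the first step: for every minor $M|G/F$, one must exhibit a suitable system of modular hyperplanes inside the ambient subquotient of $PG(d,q)$ so that the ``all internal elements'' property is preserved under contraction and restriction. This is a delicate combinatorial book-keeping task, relying crucially on the fact that $M$ is modularly complemented so that every flat admits a modular complement from which a new basis of the minor, together with its own internal points, can be extracted.
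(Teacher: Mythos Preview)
Your strategy hinges on showing $B_c(M)=B_f(M)$, but this equality is false in general for these matroids, so the reduction to Theorem~\ref{thm:fineMatroidAuto} collapses.

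The specific failure is in your second step. The case $F=\emptyset$ of the hypothesis in Lemma~\ref{lem:coarsemin} says only that $M|G$ is connected whenever $G$ is already connected---a tautology---so it does \emph{not} imply that every flat of $M$ is connected. In fact disconnected flats exist. Take $d=3$, $q\ge 2$, and let $M$ consist of exactly the internal points of $PG(3,q)$. The projective line through $(1,1,0,0)$ and $(0,0,1,1)$ is $\{(s,s,t,t):[s:t]\in\mathbb{P}^1\}$; a point on it is internal iff $s=0$ or $t=0$, so the line meets $E(M)$ in precisely these two points. Hence their closure in $M$ is a rank~$2$ flat with two elements, i.e.\ $U_{2,2}$, which is disconnected. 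This single example refutes both ``every flat is connected'' and your claim that the extra points of a join $F_1\vee F_2$ are always internal: here the extra points $(s,s,t,t)$ with $st\neq 0$ are all non-internal. Consequently $B_m(M)\neq B_f(M)$, and no amount of book-keeping with modular complements will repair the argument.

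The paper proceeds quite differently. It only establishes $B_c(M)=B_m(M)$ via Lemma~\ref{lem:coarsemin}, and then works directly in the minimal nested set structure. After checking that $M/i$ is never a non-trivial parallel connection, Corollary~\ref{cor:rankcorankone} forces an automorphism $\varphi$ to send rank~$1$ rays to rays of rank~$1$ or corank~$1$. A neighbour-counting argument (comparing the number of rank~$1$ rays adjacent to a coordinate hyperplane ray with the number adjacent to a point ray) rules out the possibility that any coordinate corank~$1$ ray is sent to a rank~$1$ ray, and a short propagation argument then shows every rank~$1$ ray maps to a rank~$1$ ray. That is what forces $\varphi$ to come from a matroid automorphism.
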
 

\begin{proof}

Every flat of rank at least $2$ in $PG(d,q)$ corresponds to a  subspace $W$ of $\mathbb{F}_q^{d+1}$ of dimension at least $2$. A straightforward argument shows that $W$ has a basis consisting only of vectors in $\mathbb{F}_q^{d+1}$ which lie in one of the coordinate hyperplanes. Hence every flat of rank at least $2$ in $PG(d,q)$ is also a flat in $M$.

Applying Lemma \ref{lem:coarsemin}, we find that the minimal nested set structure on $B(M)$ coincides with the coarse structure. Let $\varphi$ be an automorphism of $B_c(M)$. By looking at circuits of size $3$, one can check  that $M/i$ is not a non-trivial parallel connection for all $i$. Hence by Corollary \ref{cor:rankcorankone} rays of rank or corank $1$ are mapped by $\varphi$ to rays of rank or corank $1$ (i.e.~rank $d-1$).
 
Let us assume that $\varphi$ sends the corank $1$ ray associated to a coordinate hyperplanes $H_i =\{ x_i = 0\}$ to the rank one ray associated to the element $l$.  To shorten notation, call  this hyperplane $H$. It contains $\frac{q^{d-1}-1}{q-1}$ many one-dimensional subspaces,
which all give rise to internal elements of $PG(d,q)$. Hence there are 
$\frac{q^{d-1}-1}{q-1}$ many rays of rank one in $B_m(M)$ which are neighbors of the ray associated to $H$ in the minimal nested set fan structure. Applying $\varphi$, we find that the ray associated to $l$ has precisely $\frac{q^{d-1}-1}{q-1}$ many neighbor rays of rank $1$ or corank $1$ in the minimal nested set structure. On the other hand, the one-dimensional subspace given by $l$ in $\mathbb{F}_q^{d+1}$ is contained in $\frac{q^{d-1}-1}{q-1}$ hyperplanes. Therefore its ray cannot have any rank $1$ neighbor in the minimal nested set structure. 

Every element of $E(M)$ contained in $H$ gives therefore rise to a ray which is  mapped to  a ray of corank $1$ under $\varphi$.
This in turn implies that all corank $1$ rays are mapped to rank $1$ rays by $\varphi$, since every hyperplane intersects $H$ non-trivially.  Now $M$ contains all corank $1$ flats of $PG(d,q)$.  Since all associated rays map to rank one rays, $M$ must contain all points from $PG(d,q)$. Since this contradicts Proposition \ref{prop:projectivespace}, our assumption must be false, which means that $\varphi$ maps the corank $1$ rays associated to $H_1, \ldots, H_{d+1}$  to corank $1$ rays.

This implies that all rays corresponding to internal points are mapped to rank $1$ rays. It suffices to show that the rays associated to the non-internal points in $E(M)$ are also mapped to rank $1$ rays, since this implies that $\varphi$ is induced by a matroid automorphism. 

If $l$ is any element in $E(M)$, then we find a corank $1$ flat $F$ containing both $l$ and an internal point $p$. If the ray associated to $l$ is mapped to a corank $1$ ray, the ray associated to $F$ is mapped to a rank one ray given by the element $m$. Applying the previous discussion to $\varphi^{-1}$, we find that $m$ is not an internal point. Now the ray associated to $p$ is mapped by $\varphi$ to a rank $1$ ray given by a point $q$. Since $m$ is not internal, the linear span of $q$ and $m$ contains an additional internal vector, hence the rays associated to $m$ and $q$ cannot be neighbors in the minimal nested set structure, contradicting our assumption that the ray for $l$ is sent to a corank one ray. Therefore $\varphi$ is indeed induced by a matroid automorphism. 
\end{proof}

We will now discuss the second class of modularly complemented matroids, namely the Dowling matroids associated to  arbitrary finite groups  $G$. 
Let $d \geq 4$, and put $[d] = \{1, \ldots, d\}$. 
We recall the definition of the Dowling matroid $Q_d(G)$, which is a simple matroid of rank $d$ on 
$E(Q_d(G)) = B \cup \Gamma$
where $B = \{b_1, \ldots, b_d\}$ is the set of coordinate points, and $\Gamma = \{g_{ij}: g \in G, 1 \leq i < j \leq d\}$ is the set of non-coordinate points. For simplicity, let us write $x_1 \vee \ldots \vee x_d$ for the closure of $\{x_1, \ldots, x_d\}$ in $M$. 

Putting $g_{ij} = g_{ji}$ whenever $i >j$, we can list the flats of rank $2$ of $Q_d(G)$ as follows:
The rank $2$ flats are precisely the coordinate flats of the form
\[ b_i \vee b_j = \{b_i, b_j\} \cup \{g_{ij}: g \in G \} \mbox{ for $i \neq j$ }\]
and the non-coordinate flats of the form
\[\{g_{ij}, h_{jk}, (gh)_{ik}\} \mbox{ for $g,  h, gh \in G$  and $i,j,k$ pairwise different in $[d]$}.\]
A subset $F$ of $E(Q_d(G))$ is a flat if and only if it is line-closed, which means that for every two different elements $x,y$  in $F$ the flat $x \vee y$  is also contained in $F$.

Note that a Dowling matroid $Q_d(G)$ is realizable over a field $F$ if and only if $G$ is isomorphic to a subgroup of $F^\ast$, see \cite[Theorem 6.10.10]{oxley}. The matroid automorphisms of $Q_d(G)$ are determined in \cite{bonin}.

Let us now determine the automorphism group the Bergman fan  of a Dowling geometry $Q_d(G)$ equipped with the coarse fan structure. 
In order to determine the minimal nested set structure, we have to study the decomposition of flats into connected components. 
Note that every flat $F$ of $Q_d(G)$ is a product of flats in graphic matroids associated to complete graphs and at most one Dowling matroid on a subset of $B$, see \cite[Proposition 6.10.18]{oxley}. 
In particular the only connected corank $1$ flats  in $Q_d(G)$ are the coordinate flats of the form $b_1 \vee \ldots \vee \widehat{b}_i \vee \ldots b_d$,
i.e.~the closure of $ B \backslash \{b_i\}$, and the non-coordinate flats of  the form $g^{(1)}_{i1} \vee \ldots \vee \widehat{g}^{(i)}_{ii} \vee \ldots \vee g^{(n)}_{in}$ for some $i\in [d]$ and some elements $g^{(1)}, \ldots, g^{(d)}$ of the group $G$.

\begin{prop}\label{prop:dowling}
Let $\varphi:  B_c(Q_d(M)) \rightarrow B_c(Q_d(M))$ be a linear automorphism of the Bergman fan of a Dowling matroid of rank at least $4$ with its coarse structure. Then $\varphi$ is either induced by a matroid automorphism of $Q_d(M)$ or $ \varphi$ is of the form $\varphi = \crem_B \circ \mu$, where $\mu$ is a matroid automorphism $Q_d(M)$ and $\crem_B$ is the Cremona map for the basis $B= \{b_1, \ldots, b_d\}$.
 \end{prop}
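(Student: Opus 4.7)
My plan is to reduce the proposition to a Theorem~\ref{thm:fineMatroidAuto}-style analysis after possibly composing with the Cremona map $\crem_B$. First I verify using Theorem~\ref{thm:cremiff} that $\crem_B$ is a well-defined automorphism of $B_c(Q_d(G))$: the rank $2$ flats spanned by pairs of basis elements are $b_i \vee b_j = \{b_i, b_j\} \cup \{g_{ij} : g \in G\}$, so the sets $(b_i \vee b_j) \setminus \{b_i, b_j\}$ partition $\Gamma = E(Q_d(G)) \setminus B$ by the index pairs; Lemma~\ref{lem:CremInvolution} then ensures $\crem_B$ descends to the quotient. Next I observe that for every $i \in E(Q_d(G))$ the contraction $Q_d(G)/i$ simplifies to $Q_{d-1}(G)$, which for $d \geq 4$ is connected and not a non-trivial parallel connection (its lattice of flats does not split non-trivially). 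Hence Corollary~\ref{cor:rankcorankone} applies and any automorphism $\varphi$ sends rank $1$ rays to rays of rank $1$ or of corank $1$ ($= d-1$).

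The heart of the proof is the following dichotomy, which I would establish by combinatorial invariants of the fan. \emph{Either} $\varphi$ sends every rank $1$ ray to a rank $1$ ray (case A), \emph{or} there is a permutation $\sigma$ of $[d]$ with $\varphi(v_{b_j}) = v_{B_{\sigma(j)}}$ for all $j$ while every non-coordinate rank $1$ ray maps to a rank $1$ ray (case B). The key invariants are: (i) the number of connected rank $2$ flats through a given rank $1$ ray, equal to $d-1$ for coordinate $b_j$ and to $1 + (d-2)|G|$ for non-coordinate $g_{ij}$, hence distinct when $|G| \geq 2$; and (ii) the dual count of coordinate-type rank $1$ neighbors of a corank $1$ ray, equal to $d-1$ for $v_{B_k}$ and to $0$ for any non-coordinate corank $1$ flat. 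Together these distinguish coordinate from non-coordinate rays, both in rank $1$ and in rank $d-1$, and so force $\varphi$ to respect these strata. The ``all-or-none'' clause of case B comes from the fact that any two basis rays $v_{b_i}, v_{b_j}$ are linked in $B_c$ through the ray $v_{b_i \vee b_j}$, so if a single $v_{b_j}$ lands in a coordinate corank $1$ flat, propagation through these rank $2$ links forces every $v_{b_i}$ to do the same. The degenerate case $|G| = 1$, where $Q_d(G) \cong M(K_{d+1})$, is subsumed by Proposition~\ref{lem:braid}.

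Finally, in case A the induced bijection of rank $1$ rays preserves incidence with all higher connected flats, hence extends to a matroid automorphism by the flat-lattice argument that underlies Theorem~\ref{thm:fineMatroidAuto}. In case B, the involution $\crem_B$ swaps coordinate rank $1$ rays with the coordinate corank $1$ rays $v_{B_k}$ and fixes every non-coordinate rank $1$ ray, so $\crem_B \circ \varphi$ is in case A and is therefore induced by a matroid automorphism $\mu$; we conclude $\varphi = \crem_B \circ \mu$. The main obstacle I expect is Step 2 above: the numerical invariants (i) and (ii) degenerate as $|G|$ varies (notably at $|G| = 1$), and the ``all-or-none'' clause cannot be read off locally but must be obtained via the global incidence geometry linking the basis rays through the rank $2$ flats $b_i \vee b_j$, so care is needed to exclude a priori the possibility that $\varphi$ mixes rank $1$ and corank $1$ images within the basis.
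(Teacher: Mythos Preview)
Your overall architecture matches the paper: verify via Theorem~\ref{thm:cremiff} that $\crem_B$ is an automorphism of $B_c(Q_d(G))$, apply Corollary~\ref{cor:rankcorankone} so that rank $1$ rays go to rank or corank $1$ rays, establish the dichotomy (either all rank $1$ rays remain rank $1$, or precisely the coordinate ones flip to coordinate corank $1$ rays), and in the second case compose with $\crem_B$ to reduce to the first. Your handling of $|G|=1$ by appeal to Proposition~\ref{lem:braid} (since $Q_d(\{e\})\cong M(K_{d+1})$) is a valid shortcut the paper does not take; the paper instead treats $|G|=1$ by a separate direct count.

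There is, however, a genuine gap in your dichotomy step. Your invariants (i) and (ii) are matroid-theoretic, not fan-theoretic, and so are not a priori preserved by $\varphi$. For (i), ``number of connected rank $2$ flats through $x$'' is a $\varphi$-invariant only if you already know that rank $2$ rays are sent to rank $2$ rays, which Corollary~\ref{cor:rankcorankone} does not give you. For (ii), counting ``coordinate-type rank $1$ neighbours'' of a corank $1$ ray presupposes that the coordinate/non-coordinate distinction on rank $1$ rays is fan-visible, which is exactly what you are trying to establish, so the argument is circular. The paper replaces these by a single genuinely fan-theoretic count: applying Corollary~\ref{cor:rankcorankone} to both $\varphi$ and $\varphi^{-1}$ shows that $\varphi$ permutes the set of rank/corank $1$ rays, hence the number of rank/corank $1$ \emph{neighbours} of any such ray is preserved. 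For a corank $1$ ray $\rho_H$ this number is simply $|H|$ (no two corank $1$ rays are adjacent), and for $|G|\geq 2$ comparing $|H|=\binom{d}{2}$ for a non-coordinate hyperplane $H$ against the strictly larger count for every rank $1$ ray rules out any non-coordinate rank $1$ ray mapping to corank $1$. The paper's ``all-or-none'' step also differs from yours: rather than propagating through the rank $2$ links $b_i\vee b_j$ (where your sketch does not explain why adjacency to $\varphi(v_{b_i\vee b_j})$ forces $v_{b_j}$ into corank $1$), it takes a coordinate hyperplane $H\ni b_i$, observes that $\varphi(\rho_H)$ must then be a rank $1$ ray $\rho_{b_k}$, and counts the rank $1$ neighbours of $\rho_{b_k}$ to conclude that every coordinate point of $H$ is forced to corank $1$.
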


\begin{proof}
First of all note that by Theorem \ref{thm:cremiff} the Cremona map $\crem_B$ preserves indeed the support of the Bergman fan.

 The matroid $Q_d(G)$ is obviously not a non-trivial parallel connection, and by \cite[Proposition 6.10.18 (i)]{oxley} we find that for every $x \in E(Q_d(G))$ the simplification of $Q_d(G) / x$ is again a Dowling matroid of rank at least $3$. Hence $Q_d(G) / x$ is not a non-trivial parallel connection, so that we can apply Corollary \ref{cor:rankcorankone} to deduce that $\varphi$ maps any ray of rank $1$ to a ray of rank or corank $1$. 
 
Let $F$ be a connected flat in $Q_d(G)$. By \cite[Proposition 6.10.18 (ii)]{oxley}, the restriction $Q_d(G)|{F}$  is either isomorphic to $Q_r(G)$ for some $r \leq d$ or to the cycle matroid $M(K_s)$ of a complete graph $K_s$. Moreover, if $F$ is any flat of rank $k$ in $Q_r(G)$, by \cite[Section 6.10, exercise 1]{oxley}, the simplification of $Q_r(G) / F$ is isomorphic to $Q_{r-k}(G)$ and hence connected. It is easy to see that for every flat $F$ in $M(K_s)$ the simplification of $M(K_s)/F$ is the cycle matroid associated to a complete graph and hence also connected. We deduce that the matroid $Q_d(G)$ satisfies the criterion in Lemma \ref{lem:coarsemin}, so that the coarse structure on the Bergman fan of $Q_d(G)$ coincides with the minimal nested set structure.
 We consider the following two cases.
 
Case 1: $\varphi$ maps all rays of rank $1$  to rays of rank $1$. Then it is induced by a matroid automorphism of $Q_d(G)$. 

Case 2: There exists a rank $1$ ray mapping to a corank $1$ ray. Let  $F$ be such a flat of rank $1$ such that the associated ray is mapped to the ray associated to the corank $1$ flat $F'$.  We claim that $F$ must be a coordinate flat $\{b_i\}$. Let us first consider the case that $|G| = 1$. Then there exists precisely one non-coordinate corank $1$ flat in $Q_d(G)$, which contains every non-coordinate rank $1$ flat. Such a non-coordinate rank $1$ ray is furthermore adjacent to $d-1 \choose 2$  rays of rank $1$. If $F$ was a non-coordinate rank $1$ flat, we deduce that $F'$ would contain ${d-1 \choose 2}+1$ flats of rank $1$. This is neither satisfied for a coordinate corank $1$ flat nor for the non-coordinate corank $1$ flat. This proves our claim for trivial $G$. Let us now assume that $|G| \geq 2$. If $F$ was a non-coordinate rank $1$ flat, the ray of  any non-coordinate corank $1$ flat containing it would be mapped to a rank $1$ ray by $\phi$.  Every connected non-coordinate corank $1$ flat is of the form  $g^{(1)}_{i1} \vee \ldots \vee \widehat{g}^{(i)}_{ii} \vee \ldots \vee g^{(d)}_{id}$ for some $i$ and some elements $g^{(1)}, \ldots, g^{(d)}$ of the group $G$. This flat contains $ {d \choose 2}$ rank $1$ flats, and hence this number counts the neighbor rays of rank $1$ or corank $1$  in the minimal nested set structure. Since $|G| \geq 2$, a straightforward counting argument shows that for every rank $1$ ray in $Q_d(G)$ the number of neighbor rays of either rank or corank equal to $1$ is strictly bigger than $ {d \choose 2}$. This proves our claim for non-trivial $G$.

Therefore every rank $1$ ray mapping to a corank $1$ ray is indeed associated to some  $\{b_i\}$,
 whereas $\phi$ maps non-coordinate rank $1$ rays to rank $1$ rays. Let $H$ be a coordinate hyperplane containing $b_i$. The ray associated to $H$ is mapped by $\phi$ to a coordinate rank $1$ ray associated to some ${b_k}$. Now $H$ contains ${d-1 \choose 2 }|G|$ non-coordinate rank $1$ flats. Their rays are all mapped to rank $1$ neighbors of the ray given by ${b_k}$. Since  this ray has precisely   ${d-1 \choose 2} |G|$ many rank $1$ neighbors, the rays of all coordinate vectors in $H$ must map to corank $1$ rays.

Hence we deduce that $\varphi$ maps every coordinate rank $1$ ray to a coordinate corank $1$ ray. 

Since every permutation on $[d]$ induces a matroid automorphism of $Q_d(G)$, we deduce that there exists a matroid automorphism $\mu_1$ such that $\varphi \circ \mu_1$ maps each rank $1$ flat $\{b_i \}$ to the corank $1$ flat $b_1 \vee \ldots \vee \widehat{b}_i \vee \ldots b_d$. Hence $\crem_B  \circ \varphi \circ \mu_1$ maps every ray of rank $1$ to itself. Therefore it is induced by a matroid automorphism $\mu_2$, and our claim follows. 
\end{proof}

\author{Kris Shaw,}
\address{Department of Mathematics, 
University of Oslo,  P.O box 1053,
Blindern,
0316, Oslo,
Norway.}
\email{krisshaw@math.uio.no} 

\author{Annette Werner,}
\address{Goethe University Frankfurt, Institut f\"ur Mathematik, Robert-Mayer-Strasse 6-8, 60325 Frankfurt, Germany.}
\email{werner@math.uni-frankfurt.de}

\begin{thebibliography}{10}

\bibitem{ap} A. Abreu, M. Pacini: {\it The automorphism group of $M_{0,n}^{\mathrm{ trop}}$ and $\overline{M}_{0,n}^{\mathrm{ trop}}$}. 
J. Combin. Theory Ser. A., 154
  (2018) 583-597.
     
\bibitem{AHK} K. Adiprasito, J. Huh, E. Katz: {\it Hodge theory of combinatorial geometries}. Ann. of Math.  188 (2018) 381-452. 

\bibitem{AllermanRau}  L. Allerman, J. Rau: {\it First steps in tropical intersection theory}. Math. Z. 264 (2010) 633-670. 

\bibitem{AminiPiquerez} O. Amini and M. Piquerez: {\it   Hodge theory for tropical varieties} arXiv:2007.07826 (2020).  


\bibitem{Ardila} F. Ardila: {\it The geometry of matroids}. 
Notices Amer. Math. Soc. 65 (2018)  902-908. 
   
\bibitem{ArdilaDenhamHuh} F. Ardila, G. Denham, J. Huh {\it The Lagrangian geometry of matroids}. arXiv:2004.13116 (2020). 


\bibitem{ArdilaKlivans} Ardila and Klivans: {\it The Bergman complex of a matroid and phylogenetic trees}. J. Combin. Theory Ser. B 96 (2006)  38-49. 

\bibitem{boku} J. Bonin and J.P.S. Kung: {\it Every group is the automorphism group of  rank-$3$ matroid.} Geom. Dedicata 50 (1994) 243-246.


\bibitem{bonin} J. Bonin: {\it Automorphisms of Dowling Lattices and Related Geometries.} Combin. Probab. Comput. 4 (1995)  109. 

\bibitem{Brion} Brion: Piecewise polynomial functions, convex polytopes and enumerative geometry.  {\it Parameter spaces (Warsaw, 1994)}. In: Banach Center Publ. 36, Polish Acad. Sci. Inst. Math., Warsaw 1996,  25-44. 

\bibitem{brunomela} A. Bruno, M. Mella: {\it The automorphism group of $\overline{M}_{0,n}$}. J. Eur. Math. Soc. (JEMS) 15 (2013) 949-968. 

\bibitem{chmr} R. Cavalieri, S. Hampe, H. Markwig, D. Ranganathan: {\it Moduli spaces of rational weighted curves and tropical geometry}. Forum of Math. Sigma 4 (2016) Paper e9, 35 pp.

\bibitem{Danilov} V.I. Danilov: {\it The Geometry of toric varieties}. Russian Math. Surveys 33 (1978) 97-154.

\bibitem{conpro} C. De Concini, C. Procesi: {\it Wonderful models of subspace arrangements}. Selecta Math. 1 (1995) 495-494.

\bibitem{EschenbrennerFalk}
C. Eschenbrenner and M. Falk: {\it Orlik-Solomon algebras and Tutte polyno- mials}. Journal of Algebraic Combinatorics, 10 (1999) 189-199.


\bibitem{eur} C. Eur: {\it Divisors on matroids and their volumes}. Journal of Combinatorial Theory, Series A 169 (2020) 105-135. 


\bibitem{feistu} E. Feichtner, B. Sturmfels: { \it Matroid polytopes, nested sets and {B}ergman fans}. 
 {Port. Math. (N.S.)}
62     (2005)    437-468.

\bibitem{feiyuz} E. Feichtner and S. Yuzvinsky {\it  Chow rings of toric varieties defined by atomic lattices} Invent. Math. 155 (2004) 515-536.


\bibitem{FrancoisRau} Francois, Rau: {\it The diagonal of tropical matroid varieties and cycle
              intersections}.
Collect. Math. {64} (2013) 185-210.


\bibitem{FultonSturmfels}
W. Fulton, B. Sturmfels: {\it Intersection theory on toric varieties}. {Topology} 36 (1997) 335-353.

\bibitem{hampe} S. Hampe: Algorithmic aspects of tropical intersection theory. Dissertation Kaiserslautern 2014. 

\bibitem{HuhKatz} J. Huh, E. Katz: {\it Log-concavity of characteristic polynomials and the {B}ergman
              fan of matroids}. {Math. Ann.} 354 (2012) 1103-1116.

	

\bibitem{kaku} J. Kahn, P.S. Kung: {\it A Classification of Modularly Complemented Geometric Lattices} Europ. J. Combinatorics 7 (1986) 243-248.

\bibitem{kuwe} S. Kurul, A. Werner: {\it On endomorphisms of arrangement complements}. Proc. Amer. Math. Soc. 147 (2019) 2797-2808. 
       
       
\bibitem{LdMRS} L. L\'opez de Medrano, F. Rinc\'on, K. Shaw: {\it {C}hern-{S}chwartz-{M}ac{P}herson cycles of matroids}.
 Proc. Lond. Math. Soc. 120
      (2020)
    1-27.


\bibitem{macstu} D. MacLagan, B. Sturmfels: Introduction to tropical geometry. 
Graduate Studies in Mathematics, 161. {\it American Mathematical Society, Providence, RI} 2015. 

\bibitem{MikhalkinRau} G. Mikhalkin, J. Rau: {\it Tropical geometry}. \url{https://math.uniandes.edu.co/~j.rau}. In preparation. 


\bibitem{OrlikTerao} P. Orlik and H. Terao: Arrangements of Hyperplanes. {\it Springer-Verlag Berlin} 1992.

\bibitem{oxley}  J. Oxley: Matroid Theory. 
Second edition. Oxford Graduate Texts in Mathematics, 21. {\it Oxford University Press, Oxford} 2011.

\bibitem{rtw} B. R\'emy, A. Thuillier, A. Werner: {\it Automorphisms of Drinfeld half-spaces over a finite field.} Compositio Math. 149 (2013) 1211- 1224.

\bibitem{Rybnikov} G. Rybnikov: {\it On the Fundamental Group of the Complement of a Complex Hyperplane Arrangement}.
Functional Analysis and Its Applications 45 (2011) 137-148.


\bibitem{Shaw}    K. Shaw: { \it A tropical intersection product in matroidal fans}. {SIAM J. Discrete Math.}  27     (2013)  459-491.

\bibitem{Shaw:Surfaces} K. Shaw: {\it Tropical surfaces}. 	arXiv:1506.07407 (2015)

\bibitem{tevelev} J. Tevelev: {\it Compactifications of subvarieties of tori.} American J. of Math. 149 (2007) 1087-1104.



\bibitem{Zharkov} I. Zharkov: {\it The {O}rlik-{S}olomon algebra and the {B}ergman fan of a
              matroid}. J. G\"{o}kova Geom. Topol. GGT 7 (2013) 25-31.
       
    
\end{thebibliography}
\end{document}